\newtheorem{theorem}{Theorem}
\newtheorem{lemma}{Lemma}[section]
\newtheorem{remark}{Remark}
\begin{document}
	\author{Sumit Kumar, Kummari Mallesham and Saurabh Kumar Singh}
	\title{Sub-convexity bound  for  $GL(3) \times GL(2)$ $L$-functions: the depth aspect}
	
	%

	\address{Sumit Kumar \newline  Stat-Math Unit, Indian Statistical Institute, 203 B.T. Road, Kolkata 700108, India; email: sumitve95@gmail.com}
	
	\address{ Kummari Mallesham \newline Stat-Math Unit, Indian Statistical Institute, 203 B.T. Road, Kolkata 700108, India;  email:iitm.mallesham@gmail.com
	}
	
	\address{ Saurabh Kumar Singh \newline  Department of Mathematics and Statistics, Indian Institute of Technology, Kanpur  208016, India; \newline  Email: saurabs@iitk.ac.in
	} 
	
	\subjclass[2010]{Primary 11F66, 11M41; Secondary 11F55}
	\date{\today}
	
	\keywords{Maass forms, subconvexity, Rankin-Selberg $L$-functions}
	\maketitle
	
	\begin{abstract}
		In this article, we will prove the following  sub-convex bound 
			\begin{align*}
			L \left( {1}/{2}, \pi \times f \times \chi \right) \ll_{f, \pi, \epsilon} (p^r)^{{3}/{2} -3/20+\epsilon},
		\end{align*}
	for $GL(3) \times GL(2)$ Rankin Selberg $L$-functions. 
	\end{abstract}
	\section{Introduction}
	Let $\pi$ be a Hecke-Maass cusp form for $SL(3,\mathbb{Z})$ with  the normalised Fourier coefficients $A(n,k)$ and  $f$ be a holomorphic Hecke cusp form for $SL(2,\mathbb{Z})$  with  the normalised Fourier coefficients  $\lambda_f(n)$. Let $\chi$ be a primitive Dirichlet character modulo $ R:= p^{r}$. Rankin-Selberg $L$-series associated to the  above objects is given by 
	\begin{align}\label{Lseries}
		L \left( s, \pi \times f \times \chi \right) =\mathop{\sum \sum}_{n, \, k=1}^{\infty}\frac{A(n,k)\lambda_f(n)\chi(n)}{(nk^2)^s},
	\end{align}
	in the half plane $\Re s >1$. It is well known that this series extends to an entire function to whole of complex plane $\mathbb{C}$ and satisfies a functional equation relating $s$ with $1-s$. It is an important problem to understand the growth of the $L$-function inside the critical strip. Functional equation and the Phragmen-Lindel{\" o}f principle yield the following convex bound 
	\begin{align*}
		L \left( \frac{1}{2}, \pi \times f \times \chi \right) \ll_{f, \pi, \epsilon}R^{\frac{3}{2}+\epsilon},
	\end{align*}
	where the implied constant depends on the forms $\pi$, $f$ and  $\epsilon>0$. The Lindel{\" o}f hypothesis predicts that such a bound holds with any positive exponent in place of $3/2+\epsilon$. But even breaking the convexity barrier is difficult and has remained open so far. In this article, our aim is to prove the following subconvex bound in the case  when  $R=p^r$ tends to infinity (both $p$ or $r$ may vary). 
	\begin{theorem}\label{r>2 thm}
		Let $\pi$ be a Hecke-Maass cusp form for $SL(3,\mathbb{Z})$, $f$ be a holomorphic Hecke cusp form for $SL(2,\mathbb{Z})$ and $\chi$ be a primitive Dirichlet character of prime power modulus $p^r$, $r\geq 3$. Then we have 
		\begin{align*}
			L \left( \frac{1}{2}, \pi \times f \times \chi \right) \ll_{f, \pi, \epsilon} (p^r)^{{3}/{2} -3/20+\epsilon},
		\end{align*}
	where the implied constant depends only on $f$, $\pi$ and  $\epsilon$ only. 
	\end{theorem}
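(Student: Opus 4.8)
The plan is to establish the subconvex bound via the amplification-free "conductor-lowering" approach that has become standard for depth-aspect subconvexity, combining the $GL(3)$ Voronoi summation formula with a delta-method (circle method) treatment of the $GL(2)$ coefficients and exploiting the $p$-adic structure of the character $\chi$. First I would use an approximate functional equation to reduce the problem to bounding the sum
\begin{align*}
S = \mathop{\sum\sum}_{n,k} A(n,k)\lambda_f(n)\chi(n)\, V\!\left(\frac{nk^2}{N}\right),
\end{align*}
where $N \asymp R^{3/2} = (p^r)^{3/2}$ and $V$ is a smooth bump function, so that the convexity bound corresponds to $|S| \ll N^{1/2+\epsilon}$ and our target is a saving of a power $(p^r)^{3/20}$ over this. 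Since the $k$-sum is short, the main task is the two-dimensional sum in $n$ and $k$ after dyadic decomposition; without loss of generality one treats the generic range $n \asymp N$, $k \asymp 1$ (the bilinear pieces with larger $k$ being easier).

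**Next** I would apply the Duke–Friedlander–Iwaniec delta method (in the Heath-Brown or conductor-lowering form) to separate the oscillation of $A(n,k)$ from that of $\lambda_f(n)\chi(n)$: write $n = m$ in one factor and detect $n = m$ through additive characters $e((n-m)a/q)$ with $q \le Q$, choosing $Q$ to balance the two dual sums. To the $n$-sum, now carrying $e(na/q)\chi(n)$, I would apply $GL(3)$ Voronoi summation, which transforms $\sum_n A(n,k) e(na/q)\chi(n) \, W(n/N)$ into a dual sum over $A(n_1,n_2)$ of length roughly $N^{1/2}(qp^r)^{3/2}/N = (qp^r)^{3/2}/N^{1/2}$ (the modulus of the relevant character sum being $q p^r$ since $\chi$ has conductor $p^r$), with an associated Gauss-sum/character-sum factor that one evaluates $p$-adically. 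To the $m$-sum carrying $e(-ma/q)\lambda_f(m)$ I would apply $GL(2)$ Voronoi (or the Wilton-type bound), producing a dual sum over $\lambda_f(m_1)$ of length $\asymp q^2/N \cdot N = q^2$ times smoothing. The point of using $r\ge 3$ is that the character sum modulo $p^r$ that arises after Voronoi — a twisted Kloosterman- or Salié-type sum — can be evaluated explicitly by stationary-phase/$p$-adic methods, yielding square-root cancellation plus a congruence condition on the dual variables that shortens the effective ranges; this is exactly the mechanism that gives depth-aspect subconvexity (as in the work of Munshi–Singh, Aggarwal, and others on $p$-power moduli).

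**Then** I would assemble: the Cauchy–Schwarz inequality applied to the $(n_1,n_2)$-sum (keeping the $GL(3)$ coefficients inside the square and opening the $q$, $m$-variables), followed by Poisson summation in the remaining long variable to exploit the congruence conditions, reduces everything to counting solutions of a linear/quadratic congruence modulo $p^r$ weighted by $GL(3)$ Ramanujan-on-average bounds $\sum_{n_1 n_2^2 \le X} |A(n_1,n_2)|^2 \ll X^{1+\epsilon}$. Optimizing $Q$ against the lengths of the dual sums, the diagonal term contributes the main saving and the off-diagonal is controlled by the square-root cancellation in the $p^r$-character sum; a clean bookkeeping of the $p$-powers shows the total is $\ll N^{1/2} (p^r)^{-3/20+\epsilon}$, which is the claim since $N \asymp (p^r)^{3/2}$.

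**The main obstacle** I expect is the precise $p$-adic evaluation of the character sum produced by $GL(3)$ Voronoi twisted by $\chi$ modulo $p^r$ and its interaction with the $q$-sum from the delta method: one must show that for $r \ge 3$ the stationary-phase analysis (Hensel-lifting the critical point of the relevant phase $p$-adically) goes through uniformly, that the resulting congruence genuinely localizes the dual variable $n_1$ to a short progression modulo a large power of $p$, and that the degenerate cases (vanishing of the second derivative of the phase, $p \mid$ various auxiliary variables) contribute negligibly. Getting the exponent to be exactly $3/20$ rather than something weaker hinges on handling this character sum with no loss and on the optimal choice of $Q$; the $GL(2)$ side is comparatively routine, and the archimedean integrals are standard stationary-phase estimates.
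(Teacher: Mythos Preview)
Your outline has the right general shape (approximate functional equation, delta method to separate oscillations, dual summation formulae, Cauchy--Poisson endgame), but two concrete points mean the proof as written would not close.

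\textbf{First, the numerology is off.} The analytic conductor of $L(1/2,\pi\times f\times\chi)$ is $p^{6r}$, so the approximate functional equation produces a sum of length $N\asymp R^{3}=(p^r)^{3}$, not $R^{3/2}$. Convexity corresponds to the \emph{trivial} bound $S_k(N)\ll N$, and the target is $S_k(N)\ll N^{1/2}R^{3/2-3/20}$. Several of your subsequent estimates (dual lengths after Voronoi, the size of $Q$) are built on the wrong scale and would need to be redone.

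\textbf{Second, and more seriously, the conductor-lowering mechanism is missing.} You invoke the phrase ``conductor-lowering'' but never specify the device. In this problem the essential trick is to detect $n=m$ in two stages: first impose the congruence $n\equiv m\pmod{p^{\ell}}$ for an auxiliary parameter $1\le\ell<r$, and only then use the DFI $\delta$-symbol to detect $(n-m)/p^{\ell}=0$. This lowers the size of the circle-method modulus from $Q\asymp\sqrt{N}$ to $Q\asymp\sqrt{N/p^{\ell}}$ and, crucially, introduces a \emph{free} $p^{\ell}$ into the additive characters so that the $GL(3)$ Voronoi is applied with modulus $qp^{\ell}$ while the $GL(2)$ side (which carries $\chi$) has modulus $qp^{r}$. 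The resulting asymmetry is exactly what the later Cauchy--Poisson step exploits; the paper states explicitly that without this congruence trick the circle-method approach fails. Simply ``choosing $Q$ to balance the dual sums'' does not replace it. The final exponent $3/20$ comes from optimizing $\ell=\lfloor 4r/5\rfloor$, a parameter that never appears in your sketch.

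A smaller structural difference: you attach $\chi$ to the $GL(3)$ side and apply $GL(3)$ Voronoi to $\sum A(n,k)e(na/q)\chi(n)$. The paper instead keeps $\chi$ with $\lambda_f$ and applies $GL(2)$ Voronoi to $\sum\lambda_f(m)\chi(m)e(-ma/q)$ after expanding $\chi$ into additive characters. Your grouping is not obviously fatal, but it changes all the moduli and dual lengths, and you have not checked that the resulting $p$-adic character sums still admit the needed square-root cancellation; in any case it cannot succeed without the $p^{\ell}$-congruence ingredient above.
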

	
\begin{remark}
	Any unitary Hecke character $\Psi$ on the idele group 
	$$\mathbb{A}_{\mathbb{Q}}^{\times}/ \mathbb{Q}^\times=\mathbb{R}_{+} \times \prod_{p}\mathbb{Z}_p^{\times}$$
	can be decomposed as $\Psi=\vert \, .\,  \vert^{it} \otimes (\otimes_p \psi_p)$, where $t \in \mathbb{R}$ and $\psi_{\mathrm{finite}}=\otimes_p \psi_p$ corresponds to a Dirichlet character $\chi$, say. Then the twisted $L$-function $L(1/2,\pi\times f \times \Psi)$ corresponds to $L(1/2+it,\pi\times f \times \chi)$. Thus the $t$-aspect subconvexity bound for $L(1/2+it,\pi \times f)$ of R. Munshi \cite{munshi12}   corresponds to bounds for the twisted $L$-values $L(1/2,\pi\times f \times \Psi)$, where the Hecke characters $\Psi$ are `supported' only at the prime at infinity.  In the present case, we are considering twists by Hecke characters which are `supported'(ramified) at a fixed finite prime $p$.  
	\end{remark}

In the proof of Theorem \ref{r>2 thm}, we do not really require the cuspidiality of $\pi$ and $f$, but it only uses summations formulas of $GL(3)$ and $GL(2)$ Fourier coefficients $A(n,1)$ and $\lambda_{f}(n)$ respectively. We note that the Voronoi summation formula for $d_3(n)$ resembles that of $A(n,1)$, where $d_3(n)$ is the triple  divisor function which corresponds to the Fourier coefficients of a  $GL(3)$ minimal Eisenstein series $E_{\min}$, see, \cite{DG} . The only difference is that we get a main term in this case, see Lemma  \ref{gl3voronoi for d_3}.  We observe that 
\begin{align}
L(s,	E_{\min} \times f\times \chi) \approx L(s,f\times \chi)^3 . 
\end{align}
Thus our approach also  gives a subconvexity bound for $L(1/2, f \times \chi)$ in the depth aspect, and our bounds are  unifrom in $p$ and $r$. We give a sketch for this proof in the  appendix.

It we take the  Eisenstein series, $E=E(\star,1/2)$ say, for $SL(2,\mathbb{Z})$ in place of the cusp form  $f$, then we have 
\begin{align}
	L(s,	\pi \times E\times  \chi) \approx L(s,\pi \times \chi)^2.
\end{align}
Note that the Fourier coefficients of $E(\star,1/2)$ are the divisor function $d(n)$, for which we have a Voronoi summation formula similar to that of $\lambda_{f}(n)$ except for the main term (see Lemma \ref{voronoi for dn}). This main term does not appear in our analysis (see Remark \ref{on gl2 eisenstein}). Thus in this case, treatment of the remaining terms is very similer to the cusp form $f$ case. Hence we get a subconvexity bound  for $L(1/2,\pi \times \chi)$ in the depth aspect and our bounds are uniform in $p$ and $r$.  \\

R. Munshi  in  \cite{Mu1} and \cite{Mu3} obtained subconvexity bound for $L(1/2, \pi \times \chi)$ when $\pi$ is a self-dual $GL(3)$ form.   In  \cite{Mu1}, he  deals with the case when $r$ is fixed and $p \rightarrow \infty$  while in \cite{Mu3},  he considers the case when $p$ is fixed and  $r \rightarrow \infty$. Recently Q. Sun and Zhao  \cite{Sun} extended Munshi's work  \cite{Mu3}  to any $GL(3)$ form, as $r \mapsto \infty $. Their bounds are not uniform with respect to $p$. Our results extend both the works of Munshi (\cite{Mu1} and \cite{Mu3})  to any $GL(3)$ form, also removes the dependency of $p$ in the result of Sun and Zhao  \cite{Sun}.  

	%
	\begin{remark}
		Using the same method along with the amplification trick (see (\cite{PS}), one can also get a similar  result for $r=2$ also. 
\end{remark}
\subsection{A brief history }
	We will now recall a brief history of the problem. We only focus on the level aspect and the  depth aspect.   For degree one $L$-functions,  $q$-aspect sub-convexity bound was first proved by Burgess \cite{DB} in $1962$ using his ingenious technique of completing short character sums. He proved that 
	\begin{equation} \label{burger for gl2}
		L\left(  s, \chi \right) \ll_{s,\epsilon} q^{3/16 + \varepsilon}, 
	\end{equation}
	for fixed $s$ with $\Re s = 1/2$ and for any $\varepsilon >0$.  After Burgess's result, there was not much progess in the level aspect untill 1990. In 2002, Conrey and Iwaniec \cite{CI},  gave a new method to prove the Weyl strength bound ($1/6+\epsilon$)  for the real characters. Recently I. Petrow and  M. Young proved the  Weyl-exponent subconvex bound for any Dirichlet L-function in  \cite{PY} and \cite{PY2}. Their work is based on the method of Conrey and Iwaniec \cite{CI}. In 2014, Mili{\' c}evi{\' c} \cite{DM} obtained a sub-Weyl subconvex bound  for $\chi$ primitive Dirichlet character modulo $p^r$.
	
	For  $GL(2)$ $L$-functions,   level aspect subconvexity problem was settled by Duke et al. in a series of articles  (\cite{DFI}, \cite{DFI-2}, \cite{DFI-2.1}) using a new form of  circle method and  amplification technique. Further refined results for $GL(2)$ $L$ functions  have been obtained in \cite{BYKO}, \cite{BH}, \cite{CI} and \cite{RM-burgess}.  Extending the above mentioned result of Mili{\' c}evi{\' c} to $GL(2)$ $L$-functions, Blomer and Mili{\' c}evi{\' c} in \cite{BM} obtained 
	\[
	L( 1/2+it,f \otimes \chi) \ll_{f,\,  \varepsilon} (1+|t|)^{5/2} p^{7/6} q^{1/3 + \varepsilon},
	\] 
	where $f$ is a holomorphic or Maass newform for $SL(2, \mathbb{Z})$, and $\chi$ is a primitive character of conductor $q=p^r$, with $p$ an odd prime. Note that the  above exponent tends to the  Weyl exponent as $r\rightarrow \infty$. Using the  conductor lower trick  introduced by R. Munshi in \cite{RM3}, S. Singh and R. Munshi  \cite{MS}, obtained the following subconvex bound when $\chi$ is a primitive Dirichlet character of modulus $p^r$ and  $r \equiv 0 \mod 3$. \[
	L\left(1/2+it,f\otimes \chi\right) \ll_{f,t, \varepsilon} p^{\frac{r}{3} +\varepsilon}.
	\] 
	
	In the case of degree three $L$-functions, the first sub-convex bound  was obtained by V. Blomer \cite{VB} for the self-dual forms.  He obtained the bound when $\chi$ is a quadratic character with prime modulus. For any primitive Dirichlet character of prime power modulus $p^r$, the  subconvex estimates were proved by R. Munshi (\cite{Mu1} and \cite{Mu3}). For any genuine $GL(3)$ $L$-functions, the  subconvexity problem was settled by R. Munshi in \cite{RM2} and \cite{RM4}. In  \cite{RM2} and \cite{RM4}, he considered the cases of moduli which are  product of primes $M_1M_2$  with $M_2^{1/2}<M_1<M_2$,  and  prime respectively. Also for any degree three $L$-functions, Q. Sun  and R. Zhao proved subconvex bounds in the depth aspect in \cite{Sun}. 
	
	For  $GL(3) \times GL(2)$ $L$-functions, the first result in this direction was obtained by V. Blomer (\cite{VB}). He proved the subconvex bound for $L(s, \pi \times f\times \chi)$, where $\pi $ is a self dual form, $f$ is a $GL(2)$ form and $\chi$ is a  quadratic character of prime modulus. This result was generalised to any $GL(3)$ form by P. Sharma (\cite{PS}) recently. 
	
\subsection{Comment on the method}
Our problem is arithmetic in nature. Using the functional equation (see Lemma \ref{AF}), our proof boils down to getting cancellations in the following sum:
	\begin{align}
	 \mathop{\sum}_{n \sim p^{3r}} \, A(n,1) \, \lambda_{f}(n)\chi(n).
\end{align}
Note that the `arithmetic conductor' of $L(1/2,\pi\times f\times \chi)$ is $p^{6r}$. We compare the above sum with the following sum
	\begin{align}
	\mathop{\sum}_{n \sim t^{3}} \, A(n,1) \, \lambda_{f}(n)n^{it},
\end{align}
to which R. Munshi  \cite{munshi12} was seeking  cancellations to get the $t$-aspect subconvexity bound for $L(1/2+it, \pi\times f)$. His approach was to apply the  circle method to separate the oscillations $A(n,1)$ and $\lambda_{f}(n)n^{it}$. While doing so, he introduced an integral which helps to lower the `conductor'. In  analytic problems, this trick is not absolutely necessary and can be removed (see \cite{LS}).  After separating the oscillations, he employs the summations formulas to dualize the sums. While analyzing the resulting character sum, he observes that the character sum boils down to an additive character,  which also plays a crucial role in our proof as well as in  \cite{Sumit}, \cite{KMS } and \cite{PS}.   In our case, following Munshi \cite{munshi12}, we also separate the oscillations  $A(n,1)$ and $\lambda_{f}(n)\chi(n)$. To lower the conductor, we introduce a congruence equation modulo $p^\ell$ (it does not change the conductor of the $L$-function due to the presence of $\chi$),   where $\ell$ is some parameter $\ell <r$. It turns out that this congruence equation trick is very crucial in our approach, without which the circle method approach will not work. Our aim in this paper is to show that the circle method approach works equally well to  give subconvexity bounds in the depth aspect with the same quality of bounds as  in the $t$-aspect.  \\

We now compare our approach with P. Sharma's recent result \cite{PS}, in which he considers the twist aspect ($\chi$ modulo $p$, $p$ varying). He also follows Munshi's method \cite{munshi12} and  also uses the  congruence equation trick. In his case, he had to transfer some `mass' from the $GL(3)$-coefficients to get required savings in  diagonal terms.  In our case, we do not need to transfer `mass' from the $GL(3)$-coefficients, as $\chi$ is a character modulo $p^r$, $r\geq 3$. Since, both  problems  are arithmetic, we end up with a character sum  in which we seek square root cancellations. P. Sharma \cite{PS} had to appeal to Deligne's bound (Riemann hypothesis for varieties over finite fields) to show square root cancellations. In our case, we don't require Deligne's bound. We achieve the required cancellations by elimentary means, albeit tedious.  
 

\subsection*{Notations} In this paper, the notation $\alpha \ll A$ will mean that for any $\epsilon>0$, there is a constant $c$ such that $|\alpha| \leq cA(p^r)^{\epsilon}$. $A \sim B$ will also have the standard meaning, i.e., $B\leq A \leq 2B$. Also, $A \asymp B$ will mean that $(p^r)^{-\epsilon}c_1A \leq B \leq Ac_2(p^r)^\epsilon$, for some absolute constants $c_1$ and $c_2$. We follow the standard $\epsilon$ convention, i.e., $\epsilon$ may vary from places to places. 
	
\subsection*{Acknowledgements} The authors are grateful to Prof. Ritabrata Munshi for sharing his beautiful ideas, explaining his ingenious method in full detail, and his kind support throughout the work. They  would also like  to thank Prof. Satadal Ganguly  for their encouragement and  constant support and  Stat-Math Unit, Indian Statistical Institute, Kolkata for the excellent research environment. Finally, authors would  like to thank the referee for his/her suggestions and comments which really helped to improve the  presentation of the article. 
	\section{\bf Preliminaries}
	In this section, we will recall some known results which we need in the proof. 
	\subsection{ Holomorphic forms on $GL(2)$}
	Let $f$ be a holomorphic Hecke eigenform  of weight $k_{f}$ for the full modular group $SL(2,\mathbb{Z})$. The Fourier expansion of $f$ at $\infty$ is given by 
	$$f(z) = \sum_{n=1}^{\infty} \, \lambda_{f}(n) \, n^{(k_{f}-1)/2} \, e(nz),$$
	for $z \in \mathbb{H}$. We have a well-known  Deligne's bound for the Fourier coefficients which says that  
	\begin{align}\label{raman bound for gl2}
		\vert \lambda_{f}(n) \vert \leq d(n),
	\end{align}
	for  $n\geq 1$, where $d(n)$ is the divisor function. We now state the Voronoi summation formula for $f$ in the following lemma. 
	
	\begin{lemma} \label{gl2voronoi}
		Let $\lambda_{f}(n)$ be as above and $g$ be a smooth, compactly supported function on $(0, \infty)$. Let $a$, $q \in \mathbb{Z}$ with $(a,q)=1$. Then we have
		$$\sum_{n=1}^{\infty} \lambda_{f}(n) \, e\left(\frac{an}{q}\right)g(n) = \frac{2\pi i^{k_f}}{q} \sum_{n=1}^{\infty} \lambda_{f}(n) \, e\left(-\frac{d n}{q}\right)\, h(n),$$
		where $ad \equiv 1 (\mathrm{mod } \, q)$ and 
		$$h(y) = \int_{0}^{\infty} g(x) \, J_{k_{f}-1} \left(\frac{4 \pi \sqrt{xy}}{q}\right) \, \mathrm{d} x,$$
			with $J_{k_f-1}$ being  the Bessel function of the  first kind of  order $k_f-1$.
	\end{lemma}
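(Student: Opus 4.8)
The plan is to deduce this from the functional equation of the additively twisted $L$-function attached to $f$, fed through Mellin inversion. First I would record the behaviour of $f$ at the cusp $a/q$: with $d$ the multiplicative inverse of $a$ modulo $q$ and $b\in\mathbb{Z}$ chosen so that $ad-bq=1$, the matrix $\gamma=\left(\begin{smallmatrix} a & b\\ q & d\end{smallmatrix}\right)$ lies in $SL(2,\mathbb{Z})$ and sends $\infty\mapsto a/q$; substituting $z=-d/q+iw/q$ with $w>0$ into the automorphy relation $f(\gamma z)=(qz+d)^{k_f}f(z)$ gives
$$f\!\left(\frac{a}{q}+\frac{i}{q^{2}w}\right)=(iw)^{k_f}\,f\!\left(-\frac{d}{q}+\frac{iw}{q}\right).$$
Expanding both sides into their Fourier expansions and taking the Mellin transform in $w$ (legitimate term by term by the exponential decay as $w\to 0$ and $w\to\infty$) yields the analytic continuation and the functional equation of
$$L(s,f,a/q):=\sum_{n=1}^{\infty}\frac{\lambda_f(n)\,e(an/q)}{n^{s}},\qquad \Re s>1,$$
relating it to $L(1-s,f,-d/q)$ with archimedean factor $(2\pi)^{-s}\Gamma\!\big(s+\tfrac{k_f-1}{2}\big)$, a power of $q$, and the root number $i^{k_f}$; since $f$ is cuspidal, $L(s,f,a/q)$ is entire and of polynomial growth in vertical strips.

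Next I would insert the test function by Mellin inversion. Writing $\tilde g(s)=\int_0^{\infty}g(x)x^{s-1}\,dx$, which is entire and rapidly decaying on vertical lines because $g\in C_c^{\infty}(0,\infty)$, we have for $\sigma>1$
$$\sum_{n=1}^{\infty}\lambda_f(n)\,e\!\left(\frac{an}{q}\right)g(n)=\frac{1}{2\pi i}\int_{(\sigma)}\tilde g(s)\,L(s,f,a/q)\,ds.$$
I would then shift the contour to $\Re s=-\sigma'$ for a fixed $\sigma'>0$ (no poles are crossed, by cuspidality), apply the functional equation, expand $L(1-s,f,-d/q)$ back into its Dirichlet series, and interchange summation and integration. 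The $n$-th term then contributes $\tfrac{2\pi i^{k_f}}{q}\,\lambda_f(n)\,e(-dn/q)$ times an inverse Mellin integral of $\tilde g(1-s)$ against the gamma ratio $\Gamma\!\big(\tfrac{k_f-1}{2}+s\big)/\Gamma\!\big(\tfrac{k_f-1}{2}+1-s\big)$ and a power of $4\pi^{2}n/q^{2}$. By the classical Mellin--Barnes evaluation $\int_0^{\infty}J_{\nu}(2\sqrt{x})\,x^{u-1}\,dx=\Gamma(\nu/2+u)/\Gamma(\nu/2+1-u)$, this integral collapses, after reinserting the Mellin transform of $g$ and unfolding, to $h(n)=\int_0^{\infty}g(x)\,J_{k_f-1}\!\big(4\pi\sqrt{xn}/q\big)\,dx$, which is precisely the claimed identity.

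The main obstacle is the analytic bookkeeping rather than any conceptual difficulty: one must justify the contour shift and the term-by-term interchange on the dual side — for which it suffices to combine the rapid decay of $\tilde g$ with Stirling's bound for the gamma ratio, together with the observation (from repeated integration by parts in the Bessel integral) that $h(n)$ decays rapidly once $n\gg q^{2+\epsilon}$ — and then to track the exact powers of $q$, $2\pi$ and $i$ so as to land precisely on the constant $\tfrac{2\pi i^{k_f}}{q}$ and the kernel $J_{k_f-1}(4\pi\sqrt{xy}/q)$. Alternatively, one may simply invoke the standard Voronoi summation formula for holomorphic cusp forms of level one; the derivation sketched above makes the statement self-contained modulo the functional equation of $f$.
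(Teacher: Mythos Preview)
Your proof is correct and follows the standard derivation of the Voronoi summation formula for holomorphic cusp forms via the functional equation of the additively twisted $L$-function and Mellin inversion. The paper, however, does not actually prove this lemma: its entire proof reads ``See Iwaniec--Kowalski \cite{IK}.'' So there is nothing substantive to compare; your sketch is essentially the argument one finds in that reference (or in Kowalski--Michel--VanderKam, or Jutila's lectures), and you correctly identify that the only delicate point is bookkeeping the constants $2\pi$, $i^{k_f}$, and the power of $q$, together with the justification of the contour shift and the interchange on the dual side. One small slip: in your modular substitution with $z=-d/q+iw/q$ one gets $\gamma z = a/q + i/(qw)$ rather than $a/q + i/(q^{2}w)$; this does not affect the structure of the argument but would alter the power of $q$ you track, so it is worth redoing that line carefully when you write it out in full.
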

	\begin{proof}
	See  Iwaniec-Kowalski \cite{IK}. 
	\end{proof}
Next we record the Voronoi  summation formula  for the Eisenstein series $E(\star,1/2)$ for $SL(2,\mathbb{Z})$. 
\begin{lemma}\label{voronoi for dn}
		Let $d(n)=\sum_{ab=|n|}1$ be the Fourier coefficients of $E(\star,1/2)$  and $g$ be a smooth, compactly supported function on $(0, \infty)$. Let $a$, $q \in \mathbb{Z}$ with $(a,q)=1$. Then we have
	$$\sum_{n=1}^{\infty} d(n) \, e\left(\frac{an}{q}\right)g(n) =I(g,q)+ \frac{1}{q}\sum_{\pm} \sum_{n=1}^{\infty} d(n) \, e\left(\mp\frac{\bar{a} n}{q}\right)\, h_{\pm}(n),$$
\end{lemma}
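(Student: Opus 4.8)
The plan is to run the argument that yields the cuspidal Voronoi formula of Lemma \ref{gl2voronoi}, with the Hecke $L$-function $L(s,f)$ replaced by the Estermann zeta function, while keeping track of the extra main term contributed by its pole. Since $g$ is smooth and compactly supported on $(0,\infty)$, its Mellin transform $\widetilde g(s)=\int_0^\infty g(x)\,x^{s-1}\,\mathrm{d}x$ is entire and decays faster than any fixed power of $|s|$ on vertical lines, so by Mellin inversion
\[
\sum_{n=1}^{\infty} d(n)\, e\!\left(\frac{an}{q}\right) g(n)=\frac{1}{2\pi i}\int_{(2)}\widetilde g(s)\, D\!\left(s,\tfrac{a}{q}\right)\mathrm{d}s,\qquad D\!\left(s,\tfrac aq\right):=\sum_{n=1}^{\infty}\frac{d(n)\,e(an/q)}{n^{s}},
\]
the Dirichlet series converging absolutely for $\Re s>1$.

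Next I would invoke two classical facts about $D(s,a/q)$, due to Estermann (and obtainable by applying a matrix $\left(\begin{smallmatrix} a & * \\ q & *\end{smallmatrix}\right)\in SL(2,\mathbb{Z})$ to the Fourier expansion of $E(\star,1/2)$, exactly as in the cuspidal case treated in \cite{IK}): first, $D(s,a/q)$ continues meromorphically to $\mathbb{C}$ with a single singularity, a double pole at $s=1$ whose principal part has the shape $\tfrac1q\big((s-1)^{-2}+c_q(s-1)^{-1}\big)$ with $c_q$ depending only on $q$ (so the pole data do not involve $a$); second, $D(s,a/q)$ satisfies a functional equation expressing it as a sum of two terms, each a product of Gamma- and trigonometric factors with $D(1-s,\bar a/q)$, respectively with $D(1-s,-\bar a/q)$. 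Granting these, I would shift the contour in the integral above from $\Re s=2$ to $\Re s=-\tfrac12$, crossing the double pole at $s=1$; its contribution is
\[
I(g,q)=\operatorname*{Res}_{s=1}\Big[\widetilde g(s)\,D\!\left(s,\tfrac aq\right)\Big]=\frac1q\int_0^\infty\big(\log x+c_q\big)\,g(x)\,\mathrm{d}x,
\]
which depends only on $g$ and $q$.

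On the shifted line $\Re s=-\tfrac12$ I would insert the functional equation, so that both $D(1-s,\pm\bar a/q)$ now lie in the region of absolute convergence; expanding them into Dirichlet series and interchanging summation with integration — permissible thanks to the super-polynomial decay of $\widetilde g$ — produces
\[
\frac1q\sum_{\pm}\sum_{n=1}^{\infty}d(n)\,e\!\left(\mp\frac{\bar a n}{q}\right)h_{\pm}(n),\qquad h_{\pm}(y)=\frac{1}{2\pi i}\int_{(-1/2)}\widetilde g(s)\,\gamma_{\pm}(s)\Big(\frac{q^{2}}{4\pi^{2}y}\Big)^{1-s}\frac{\mathrm{d}s}{y},
\]
where $\gamma_{\pm}$ is the weight factor of the corresponding term of the functional equation. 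Recognizing these Mellin--Barnes integrals through the standard Mellin transforms of the Bessel functions $J_0$, $Y_0$ and $K_0$ would then recast them as $h_{\pm}(y)=\int_0^\infty g(x)\,\mathcal J_{\pm}\!\big(4\pi\sqrt{xy}/q\big)\,\mathrm{d}x$, with $\mathcal J_{\pm}$ an explicit combination of Bessel functions (one term contributing a $Y_0$-kernel, the other a $K_0$-kernel), which is the asserted identity.

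The contour shift and the interchange of sum and integral are routine, being justified by the super-polynomial decay of $\widetilde g(s)$. The one step that demands genuine care is the bookkeeping in the middle: pinning down the exact form of the Estermann functional equation — the splitting into the two arguments $\pm\bar a/q$ together with the correct Gamma and trigonometric weights — and, correspondingly, the exact Bessel kernels $\mathcal J_{\pm}$ defining $h_{\pm}$. Since all that the subsequent analysis needs is the \emph{structure} of this identity, and in particular the fact that the main term $I(g,q)$ is independent of $a$ (so that it cancels after summing over the relevant residue classes; cf.\ Remark \ref{on gl2 eisenstein}), I would be content to quote these normalizations from Estermann's work or from a standard reference rather than recompute them.
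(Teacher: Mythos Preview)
Your approach is correct and is in fact the standard route to this classical identity: Mellin inversion, the Estermann zeta function $D(s,a/q)$, contour shift across the double pole at $s=1$ (yielding $I(g,q)$ with $c_q=2\gamma-2\log q$, matching the paper's explicit constant), and the functional equation producing the dual sums with $Y_0$ and $K_0$ kernels.

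There is nothing to compare against, however, because the paper does not prove this lemma. It is stated as a known result --- the Voronoi formula for $d(n)$ goes back to Vorono\"i himself and the twisted version to Estermann and Wilton --- and the explicit shapes of $I(g,q)$ and $h_{\pm}$ are simply recorded after the lemma environment closes, without a proof block. A reference such as Jutila's monograph \cite{MJ} or \cite{IK} would suffice, which is essentially what you propose in your last paragraph. Your sketch is a perfectly good self-contained derivation; the only cosmetic point is that you might write down the explicit value $c_q=2\gamma-2\log q$ rather than leaving it abstract, since the paper does so.
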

where $$I(g,q)=\frac{1}{q}\int_{0}^{\infty}(\log x+2\gamma-2\log q) g(x)\mathrm{d}x,$$
$$h_{+}(y)=\int_{0}^{\infty}-2\pi g(x) Y_0\left(\frac{4\pi \sqrt{xy}}{q}\right)\mathrm{d}x  \  \ \  \mathrm{and} \ \ \  h_{-}(y)=\int_{0}^{\infty}4 g(x)K_0\left(\frac{4\pi \sqrt{xy}}{q}\right)\mathrm{d}x.$$
	In the following lemma, we record some properties of $J_{k_f-1}$. 
	\begin{lemma} \label{bessel properties}
		Let $J_{k_f-1}(2\pi x)$ be the Bessel function of the first kind of integer order $k_f $.  Then, for fixed $k_f$, as $x \rightarrow \infty$, we have 
			$$J_{k_f-1}(2\pi x)= e(x)Z^{+}(x)+e(-x){Z^{-}}(x),$$
			where $Z^{-}(x)=\overline{Z^{+}(x)}$ and  $Z^{+}$ is a smooth function satisfying 
			$$ x^j {Z^+}^{(j)}(x) \ll_{j, \, k_f } \frac{1}{\sqrt{x}},$$
			for $j \geq 0$.
	\end{lemma}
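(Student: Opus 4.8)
The plan is to derive Lemma \ref{bessel properties} from the classical large-argument (Hankel) asymptotic expansion of Bessel functions. First I would write $J_\nu = \tfrac12\big(H^{(1)}_\nu + H^{(2)}_\nu\big)$ with $\nu = k_f - 1$; since $\nu$ is a fixed non-negative integer and the argument $2\pi x$ is real and positive, $H^{(2)}_\nu(2\pi x) = \overline{H^{(1)}_\nu(2\pi x)}$, so $J_\nu(2\pi x) = \Re\, H^{(1)}_\nu(2\pi x)$ and it is enough to understand $H^{(1)}_\nu$.

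Next I would invoke the standard asymptotic expansion (see e.g. Watson's treatise or \cite{IK}): for $z \to \infty$ and each fixed $N \geq 1$,
\begin{align*}
	H^{(1)}_\nu(z) = \sqrt{\frac{2}{\pi z}}\; e^{i(z - \nu\pi/2 - \pi/4)}\left(\sum_{0 \le k < N} \frac{i^k a_k(\nu)}{z^k} + O_{N,\nu}\!\left(\frac{1}{z^{N}}\right)\right),
\end{align*}
with $a_0(\nu) = 1$, the expansion being differentiable term by term, with the $j$-th derivative of the remainder of size $O_{N,j,\nu}(z^{-N-j})$. Putting $z = 2\pi x$, the factor $e^{iz} = e(x)$ comes out; collecting the constant $1/(2\pi\sqrt x)$, the fixed phase $e^{-i(\nu\pi/2 + \pi/4)}$, and the slowly varying series into
\begin{align*}
	Z^+(x) := \frac{1}{2\pi\sqrt x}\, e^{-i(\nu\pi/2 + \pi/4)}\, W(x), \qquad W(x) := \sum_{0 \le k < N}\frac{i^k a_k(\nu)}{(2\pi x)^k} + O_{N,\nu}(x^{-N}),
\end{align*}
one obtains precisely $J_\nu(2\pi x) = e(x) Z^+(x) + e(-x)\overline{Z^+(x)}$, so that $Z^-(x) = \overline{Z^+(x)}$. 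The derivative bound is then immediate: for a given $j$, choosing $N > j$ and using term-by-term differentiability gives $x^i W^{(i)}(x) \ll_{i,\nu} 1$ for $0 \le i \le j$, and a single pass of the Leibniz rule applied to $Z^+(x) = c_\nu\, x^{-1/2} W(x)$ yields $x^j {Z^+}^{(j)}(x) \ll_{j,\nu} x^{-1/2}$.

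The only point that genuinely needs justification is the term-by-term differentiability of the Hankel expansion; I would either quote it from the references above or argue it directly from the integral representation $J_n(y) = \tfrac1\pi\int_0^\pi \cos(y\sin\theta - n\theta)\,\mathrm{d}\theta$, valid for the integer order $n = k_f - 1$. Applying the stationary phase method --- the phase $2\pi x\sin\theta - n\theta$ has stationary points near $\theta = \pm\pi/2$ for large $x$, repeated integration by parts controls the complementary range, and a local change of variables together with a Taylor expansion at the stationary points generates the full asymptotic series --- and differentiating under the integral sign in $x$ throughout, one gets all the required estimates on $Z^\pm$ with uniform constants. I expect this stationary-phase bookkeeping to be the most tedious step, but it is entirely standard.
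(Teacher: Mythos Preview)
Your proposal is correct and is the standard derivation of this well-known fact. The paper does not supply its own proof of this lemma; it is stated as a preliminary result without argument, so there is nothing to compare against beyond noting that your Hankel-expansion route is exactly the classical proof one would cite from Watson or \cite{IK}.
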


	\subsection{Automorphic  forms on $GL(3)$}
	In this subsection, we will recall some background on the  Maass forms for $GL(3)$. This subsection, except for the notations, is taken from \cite{Li}. Let $\pi$ be a Hecke-Maass cusp form of type $(\nu_{1}, \nu_{2})$ for $SL(3, \mathbb{Z})$. Let $A(n,k)$ denote the normalized Fourier coefficients of $\pi$. Let 
	$${\alpha}_{1} = - \nu_{1} - 2 \nu_{2}+1, \, {\alpha}_{2} = - \nu_{1}+ \nu_{2},  \, {\alpha}_{3} = 2 \nu_{1}+ \nu_{2}-1$$ 
	be the Langlands parameters for $\pi$ (see Goldfeld \cite{DG} for more details).
	Let $g$ be a compactly supported smooth function on  $ (0, \infty )$ and $\tilde{g}(s) = \int_{0}^{\infty} g(x) x^{s-1} \mathrm{d}x$ be its Mellin transform. For $\ell= 0$ and $1$, we define
	\begin{equation*}
		\gamma_{\ell}(s) :=  \frac{\pi^{-3s-\frac{3}{2}}}{2} \, \prod_{i=1}^{3} \frac{\Gamma\left(\frac{1+s+{\alpha}_{i}+ \ell}{2}\right)}{\Gamma\left(\frac{-s-{\alpha}_{i}+ \ell}{2}\right)}.
	\end{equation*}
	Set $\gamma_{\pm}(s) = \gamma_{0}(s) \mp \gamma_{1}(s)$ and let 
	\begin{align}\label{gl3 integral transform}
		G_{\pm}(y) = \frac{1}{2 \pi i} \int_{(\sigma)} y^{-s} \, \gamma_{\pm}(s) \, \tilde{g}(-s) \, \mathrm{d}s,
	\end{align}
	where $\sigma > -1 + \max \{-\Re({\alpha}_{1}), -\Re({ \alpha}_{2}), -\Re({\alpha}_{3})\}$. With the aid of the above terminology, we now state the $GL(3)$ Voronoi summation formula in the following lemma:
	\begin{lemma} \label{gl3voronoi}
		Let $g(x)$ and  $A(n,k)$ be as above. Let $a,\bar{a}, q \in \mathbb{Z}$ with $q \neq 0, (a,q)=1,$ and  $a\bar{a} \equiv 1(\mathrm{mod} \ q)$. Then we have
		\begin{align*} \label{GL3-Voro}
			\sum_{n=1}^{\infty} A(n,k) e\left(\frac{an}{q}\right) g(n) 
			=q  \sum_{\pm} \sum_{n_{1}|qk} \sum_{n_{2}=1}^{\infty}  \frac{A(n_1,n_2)}{n_{1} n_{2}} S\left(k \bar{a}, \pm n_{2}; qk/n_{1}\right) G_{\pm} \left(\frac{n_{1}^2 n_{2}}{q^3 k}\right),
		\end{align*} 
		where  $S(a,b;q)$ is the  Kloosterman sum which is defined as follows:
		$$S(a,b;q) = \sideset{}{^\star}{\sum}_{x \,\mathrm{mod} \, q} e\left(\frac{ax+b\bar{x}}{q}\right).$$
	\end{lemma}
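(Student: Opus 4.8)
\textbf{Proof proposal for Lemma \ref{gl3voronoi}.}

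The plan is to derive the formula from the functional equation of the additively twisted $GL(3)$ $L$-function, followed by a Mellin inversion. Since $g$ is smooth and compactly supported, Mellin inversion gives
\[
\sum_{n=1}^{\infty} A(n,k)\, e\!\left(\frac{an}{q}\right) g(n) = \frac{1}{2\pi i}\int_{(\sigma_0)} \tilde g(s)\, D_k\!\left(s,\tfrac{a}{q}\right) \mathrm{d}s, \qquad D_k\!\left(s,\tfrac{a}{q}\right) := \sum_{n=1}^{\infty} \frac{A(n,k)\, e(an/q)}{n^{s}},
\]
where $\sigma_0$ is taken large enough for absolute convergence, using the polynomial (Rankin--Selberg) bound for the partial sums of $|A(n,k)|$. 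Everything then reduces to the meromorphic continuation of $D_k(s,a/q)$ and to a functional equation relating it to a dual series with $s\mapsto 1-s$ and the coefficients $A(n,k)$ replaced by the $GL(3)$-dual coefficients $A(n_1,n_2)$.

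The dependence on $k$ can be absorbed first: the $GL(3)$ Hecke relations express $A(n,k)$ as a short divisor-sum convolution of $A(\cdot,1)$ with $A(1,\cdot)$, which is precisely what produces both the divisor range $n_1\mid qk$ and the arithmetic factor $k$ inside $S(k\bar{a},\pm n_2;qk/n_1)$ in the final formula; so the core case is $k=1$. For $k=1$ the main input is the functional equation of $L(s,\pi\otimes\chi)$ for primitive Dirichlet characters $\chi$, which is classical and follows from the automorphy of $\pi$. To convert the multiplicative twists into the additive twist $e(an/q)$ one expands: writing $e=(n,q)$ and $n=em$, one detects $e$ by Ramanujan-type sums and, on each piece, uses $\sum_{\chi \bmod q/e}\bar\chi(a)\tau(\chi)\chi(m)$ to rewrite $e(am/(q/e))$. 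Substituting the twisted functional equations, inserting the gamma factors, and re-summing the resulting Gauss sums and character sums over the divisors of $q$ is exactly the computation that produces the Kloosterman sum $S(\bar{a},\pm n_2;q/n_1)$ together with the condition $n_1\mid q$ on the dual side; the even/odd parity of $\chi$ is what creates the two cases, yielding the two gamma factors $\gamma_0(s)\mp\gamma_1(s)=\gamma_{\pm}(s)$ built from the Langlands parameters $\alpha_i$.

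With the functional equation in hand, one substitutes the dual expansion of $D_1(s,a/q)$ back into the Mellin integral, shifts $\Re s$ into the range where the combined sum and integral converge absolutely, interchanges them, and recognizes the inner $s$-integral as exactly $G_{\pm}\!\left(n_1^2 n_2/(q^3 k)\right)$ from \eqref{gl3 integral transform} after the change of variables dictated by the functional equation (the factor $q$ out front and the normalization $1/(n_1 n_2)$ falling out of the bookkeeping). Reinstating $k$ via the Hecke reduction then gives the stated identity, and since $\pi$ is fixed all estimates are uniform in $a,q,k$.

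The main obstacle is the arithmetic bookkeeping in the middle step: one must carefully track the conductors of all characters appearing in the expansion of $e(an/q)$, the associated Gauss sums, and the non-primitive contributions, and verify that everything collapses into the \emph{single} Kloosterman sum $S(k\bar{a},\pm n_2;qk/n_1)$ with exactly the modulus $qk/n_1$ and exactly the divisor condition $n_1\mid qk$; there is genuine danger of losing or gaining a divisor here. A secondary technical point is the analytic justification: the dual Dirichlet series $\sum_{n_2}A(n_1,n_2)/(n_1 n_2)$ is not absolutely convergent on the relevant vertical line, so the interchange of summation and integration must be justified either by exploiting the rapid decay of $G_{\pm}$ (obtained from repeated integration by parts, equivalently Stirling applied to $\gamma_{\pm}$ in the Mellin--Barnes integral, using that $g$ is compactly supported) or by first establishing the identity in a half-plane of absolute convergence and continuing. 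No input beyond the automorphy of $\pi$ and standard Gauss- and Kloosterman-sum manipulations is required; this is the content of \cite{Li} and \cite{DG}.
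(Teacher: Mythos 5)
The paper offers no proof of this lemma beyond the citation ``See \cite{Li}'', and your sketch is precisely the route taken in that cited source: Mellin inversion against $\tilde g$, the functional equation of the multiplicatively twisted $GL(3)$ $L$-function combined with Gauss-sum expansions of the additive character $e(an/q)$, and Hecke-relation bookkeeping to absorb the index $k$ into the divisor condition $n_1\mid qk$ and the Kloosterman sum. So your proposal is correct in outline and takes essentially the same approach as the proof the paper relies on.
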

	\begin{proof}
		See \cite{Li} for the proof. 
	\end{proof}
	The following lemma, which gives the Ramanujan conjecture on average, is also well-known.  
	\begin{lemma} \label{ramanubound}
		We have 
		$$\mathop{\sum \sum}_{n_{1}^{2} n_{2} \leq x} \vert A(n_{1},n_{2})\vert ^{2} \ll \, x,$$
		where the implied constant depends on the form $\pi$.
	\end{lemma}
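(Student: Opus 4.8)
The plan is to deduce this ``Ramanujan on average'' bound from the analytic properties of the $GL(3)\times GL(3)$ Rankin--Selberg $L$-function $L(s,\pi\times\widetilde{\pi})$, where $\widetilde{\pi}$ denotes the contragredient of $\pi$ (its Fourier coefficients being $\overline{A(m,n)}$). Recall that for $\Re s>1$ this $L$-function is given by an absolutely convergent degree-nine Euler product, that it continues to a meromorphic function on $\mathbb{C}$ whose only pole is a \emph{simple} pole at $s=1$ (here the cuspidality of $\pi$ is used), that it satisfies a functional equation relating $s$ and $1-s$, and that it is of polynomial growth in vertical strips; see Goldfeld \cite{DG} or Li \cite{Li}.

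First I would record the Dirichlet series identity, valid for $\Re s>1$,
$$
D(s):=\sum_{n_1=1}^{\infty}\sum_{n_2=1}^{\infty}\frac{|A(n_1,n_2)|^2}{(n_1^2 n_2)^s}=\frac{L(s,\pi\times\widetilde{\pi})}{\zeta(3s)},
$$
which follows from the Hecke multiplicativity of the $A(n_1,n_2)$ together with the explicit evaluation of the local factors as in \cite{DG} (one in fact only needs this up to finitely many Euler factors). The coefficients of $D(s)$ are non-negative, the series converges absolutely for $\Re s>1$, and since $\zeta(3s)$ is holomorphic and non-vanishing for $\Re s\ge 1/2$, the function $D(s)$ continues holomorphically to a neighbourhood of the line $\Re s=1$ apart from a simple pole at $s=1$ with residue $C:=\operatorname{Res}_{s=1}D(s)$, which is finite (indeed positive).

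Finally, writing $b(n):=\sum_{n_1^2 n_2=n}|A(n_1,n_2)|^2\ge 0$, so that $D(s)=\sum_{n\ge 1}b(n)n^{-s}$, I would invoke the Wiener--Ikehara Tauberian theorem (applicable since the coefficients are non-negative and $D(s)-C/(s-1)$ extends continuously to $\Re s\ge 1$) to conclude
$$
\sum_{n\le x} b(n)=\sum_{n_1^2 n_2\le x}|A(n_1,n_2)|^2\sim C\,x\qquad (x\to\infty),
$$
which in particular gives the asserted bound $\ll x$, with the implied constant depending on $\pi$ through $C$. Alternatively, one can shift the contour in $\frac{1}{2\pi i}\int_{(2)}D(s)\,x^s\,s^{-1}\,\mathrm{d}s$ past the pole at $s=1$, using the polynomial growth of $L(s,\pi\times\widetilde{\pi})$ in vertical strips, to obtain the sharper estimate $\sum_{n\le x}b(n)=Cx+O(x^{1-\delta})$ for some $\delta>0$.

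There is no essential difficulty here: the statement is classical and the only non-elementary ingredient is the meromorphic continuation and pole structure of $L(s,\pi\times\widetilde{\pi})$, which can simply be quoted. The only point worth flagging is that the one-line Rankin-trick bound $\sum_{n_1^2 n_2\le x}|A(n_1,n_2)|^2\le x^{1+1/\log x}\,D(1+1/\log x)\ll x\log x$ falls just short of the linear bound, so a genuine Tauberian (or contour-shift) argument is needed for the lemma exactly as stated.
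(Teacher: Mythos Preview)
Your proposal is correct and is precisely the standard Rankin--Selberg argument that the paper invokes: the paper itself gives no proof but simply refers to Goldfeld \cite{DG}, where the bound is established via the analytic properties of $L(s,\pi\times\widetilde{\pi})$ exactly as you outline. Your sketch is in fact more detailed than what appears in the paper, and the observation that the naive Rankin-trick bound only yields $x\log x$ (so that a genuine Tauberian or contour-shift step is needed) is a nice point of care.
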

	\begin{proof}
		For the proof, we refer to Goldfeld's book \cite{DG}.
	\end{proof}

	\subsection{ Delta method} \label{circlemethod}
	Let $\delta: \mathbb{Z} \to \{0,1\}$ be defined by
	\[
	\delta(n)= \begin{cases}
		1 \quad \text{if} \,\  n=0; \\
		0 \quad  \,  $\textrm{otherwise}$.
	\end{cases}
	\]
	The above delta symbol can be used to separate the oscillations involved in a sum. Further, we seek a Fourier expansion of $\delta(n)$. We mention here an expansion for $\delta(n)$ which is due to Duke, Friedlander and Iwaniec (see \cite{IK}). Let $L\geq 1$ be a large number. For $n \in [-2L,2L]$, we have
	
	\begin{align*} 
		\delta(n)= \frac{1}{Q} \sum_{1 \leq q \leq Q} \frac{1}{q} \, \sideset{}{^\star}{\sum}_{a \, \mathrm{mod} \, q} \, e \left(\frac{na}{q}\right) \int_{\mathbb{R}} g(q,x) \,  e\left(\frac{nx}{qQ}\right) \, \mathrm{d}x,
	\end{align*}
	where  $Q=2L^{1/2}$. The $\star$ on the $q$-sum indicates that the sum over $a$ is restricted by the condition $(a,q)=1$. The function $g$ is the only part in the above formula which is not explicitly given. Nevertheless, we only need the following  properties of $g$ in our analysis:
	\begin{align} \label{g properties}
	&1. \ g(q,x)=1+h(q,x), \quad \text{with} \ \ \  h(q,x)=O \left(\frac{Q}{q} \left(\frac{q}{Q}+|x|\right)^{B}\right), \notag \\
	&2. \  x^j \frac{\partial ^j}{\partial x^j}g(q,x) \ll \log Q \min \left\lbrace \frac{Q}{q}, \frac{1}{|x|}\right\rbrace, \notag  \\
	&3. \  g(q,x) \ll |x|^{-B},  \notag \\
	&4. \ 	\int_{\mathbb{R}}|g(q,x)| \mathrm{d}x \ll Q^{\epsilon},
\end{align}
for any $B>1$ and $j \geq 1$. Using the above properties of $g(q,x)$ we observe that the effective range of the   above integration over $x$   is $[-L^{\epsilon},L^{\epsilon}]$. We record the above observations in the following lemma.
\begin{lemma}\label{deltasymbol}
		Let $\delta$ be as above and $g$ be a function satisfying \eqref{g properties}. Let $L\geq 1$ be a large parameter. Then, for $n \in [-2L,2L]$, we have
		\begin{equation*} 
			\delta(n)= \frac{1}{Q} \sum_{1 \leq q \leq Q} \frac{1}{q} \, \sideset{}{^\star}{\sum}_{a \, \mathrm{mod} \, q} \, e \left(\frac{na}{q}\right) \int_{\mathbb{R}}W_1(x) g(q,x) \,  e\left(\frac{nx}{qQ}\right) \, \mathrm{d}x+O(L^{-2020}),
		\end{equation*}
		where $Q=2L^{1/2}$ and  $W_1(x)$ is a smooth bump function supported in $[-2L^{\epsilon},2L^{\epsilon}]$, with $W_1(x)=1$ for $x \in [-L^{\epsilon},L^{\epsilon}] $ and $W^{(j)}\ll_j 1$.
	\end{lemma}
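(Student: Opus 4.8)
The plan is to derive this from the unweighted Duke--Friedlander--Iwaniec expansion of the delta symbol, exactly as recorded in Iwaniec--Kowalski \cite{IK}: for $n \in [-2L,2L]$ and $Q = 2L^{1/2}$,
\begin{equation*}
\delta(n)= \frac{1}{Q} \sum_{1 \leq q \leq Q} \frac{1}{q} \, \sideset{}{^\star}{\sum}_{a \, \mathrm{mod} \, q} \, e \left(\frac{na}{q}\right) \int_{\mathbb{R}} g(q,x) \,  e\left(\frac{nx}{qQ}\right) \, \mathrm{d}x,
\end{equation*}
where $g$ satisfies \eqref{g properties}. I would fix once and for all a smooth function $W_1$ on $\mathbb{R}$ with $0 \le W_1 \le 1$, supported in $[-2L^{\epsilon}, 2L^{\epsilon}]$, identically $1$ on $[-L^{\epsilon}, L^{\epsilon}]$, and with $W_1^{(j)} \ll_j 1$ (obtained by rescaling a fixed bump function; since the transition happens on the scale $L^{\epsilon} \ge 1$, even this crude derivative bound is available). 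Inserting $1 = W_1(x) + (1 - W_1(x))$ into the $x$-integral splits the right-hand side into the main term claimed by the lemma plus the tail
\begin{equation*}
\mathcal{E} = \frac{1}{Q} \sum_{1 \leq q \leq Q} \frac{1}{q} \, \sideset{}{^\star}{\sum}_{a \, \mathrm{mod} \, q} \, e \left(\frac{na}{q}\right) \int_{\mathbb{R}} \bigl(1 - W_1(x)\bigr) \, g(q,x) \,  e\left(\frac{nx}{qQ}\right) \, \mathrm{d}x,
\end{equation*}
and the entire task is to show $\mathcal{E} = O(L^{-2020})$.

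The key point is that $1 - W_1$ is supported on $\{|x| \ge L^{\epsilon}\}$ and bounded there by $1$, while property $3$ of \eqref{g properties} gives, for every $B > 1$, the bound $g(q,x) \ll_B |x|^{-B}$ uniformly for $q \le Q$. Hence, since $|e(na/q)| = |e(nx/(qQ))| = 1$,
\begin{equation*}
\left| \int_{\mathbb{R}} \bigl(1 - W_1(x)\bigr) g(q,x) \, e\!\left(\tfrac{nx}{qQ}\right) \mathrm{d}x \right| \le \int_{|x| \ge L^{\epsilon}} |g(q,x)| \, \mathrm{d}x \ll_B \int_{|x| \ge L^{\epsilon}} |x|^{-B}\, \mathrm{d}x \ll_B \frac{L^{\epsilon(1-B)}}{B - 1},
\end{equation*}
and choosing $B = B(\epsilon)$ large enough that $\epsilon(B-1) > 2021$ makes the inner integral $\ll L^{-2021}$, uniformly in $q$, $a$ and $n$.

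Finally, bounding the outer sum trivially via $\sum_{1 \le q \le Q} \frac{1}{q}\, \sideset{}{^\star}{\sum}_{a \, \mathrm{mod}\, q} 1 \le \sum_{1 \le q \le Q} 1 = Q$, we obtain $|\mathcal{E}| \le Q^{-1} \cdot Q \cdot O(L^{-2021}) = O(L^{-2021}) = O(L^{-2020})$, which is exactly the assertion. There is no genuine obstacle here: the whole content is the rapid decay of the DFI weight $g$, which already confines the effective range of the $x$-integral to $|x| \le L^{\epsilon}$; the smooth cutoff $W_1$ merely records this truncation in a form convenient for the subsequent analysis of the $x$-integral. The only point requiring a moment's care is to invoke property $3$ in its $q$-uniform form, so that the gain survives multiplication by the full length $Q$ of the $q$-sum.
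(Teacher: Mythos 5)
Your argument is correct and is essentially the paper's own: the paper simply cites Iwaniec--Kowalski (Ch.~20) and Huang's Lemma~15, whose content is exactly this truncation of the $x$-integral via the rapid decay $g(q,x)\ll_B |x|^{-B}$ of property~3, which is how the authors justify that the effective range of $x$ is $[-L^{\epsilon},L^{\epsilon}]$. Your explicit tail estimate and the trivial bound on the $q,a$-sums fill in the same computation the paper leaves to the references.
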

	\begin{proof}
		For the proof, we refer to Chapter 20 of the book \cite{IK} by Iwaniec and Kowalski and Lemma 15 of the article \cite{BingH} by B. Huang.
	\end{proof}
	\subsection{Stationary phase method} In this subsection, we will recall some facts about the exponential integrals of the following form:
	$$I= \int_{a}^bg(x)e(f(x))\mathrm{d}x,$$
	where $f$ and $g$ are smooth real valued functions on $[a,b]$. 
	\begin{lemma}\label{derivative bound}
		Let $I$, $f$ and $g$ be as above.  Then, for $r \geq 1$, we have 
		\begin{align}
			I \ll \frac{\text{Var} \ g}{\min|f^{(r)}(x)|^{1/r}}, \notag
		\end{align} 
		where $\text{Var} $ is the total variation of $g$ on $[a,b]$. 
		Moreover, let $f^{\prime}(x)\geq B$ and $f^{(j)}(x) \ll B^{1+\epsilon}$ for $j \geq 2$ together with $\textrm{Supp}(g) \subset (a,b)$  and $g^{(j)}(x) \ll_{a,b,j} 1$. Then we have
		\begin{align}
			I \ll_{a,b,j,\epsilon}B^{-j+\epsilon}. \notag
		\end{align}
	\end{lemma}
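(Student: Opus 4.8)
Both displays in Lemma \ref{derivative bound} are classical: the first is van der Corput's $r$-th derivative test, and the second is the standard non-stationary-phase estimate obtained by repeated integration by parts. I would prove them in that order, reducing the amplitude-weighted statements to the case $g\equiv 1$ by a single integration by parts.

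For the first bound, set $\lambda=\min_{[a,b]}|f^{(r)}|$ and first treat $g\equiv 1$, proving that $\bigl|\int_u^v e(f(x))\,\mathrm{d}x\bigr|\ll\lambda^{-1/r}$ \emph{uniformly} over all subintervals $[u,v]\subseteq[a,b]$. This goes by induction on $r$. For $r=1$, since $f'$ is continuous and $|f'|\ge\lambda>0$ it has constant sign, and (assuming $f'$ monotone, as is automatic in the inductive step and available in the applications) the second mean value theorem applied to $\int (2\pi i f')^{-1}\,\mathrm{d}\,e(f)$ gives $\ll\lambda^{-1}$. For $r\ge2$, $f^{(r-1)}$ is strictly monotone, hence vanishes at most at one point $x_0$; on $|x-x_0|\le\lambda^{-1/r}$ I bound trivially, while outside this interval $|f^{(r-1)}(x)|=\bigl|\int_{x_0}^{x}f^{(r)}\bigr|\ge\lambda|x-x_0|\ge\lambda^{1-1/r}$, so the induction hypothesis (order $r-1$) yields $\ll(\lambda^{1-1/r})^{-1/(r-1)}=\lambda^{-1/r}$ on each outer piece. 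For general $g$, let $G(x)=\int_x^b e(f(t))\,\mathrm{d}t$, so $\|G\|_\infty\ll\lambda^{-1/r}$ by the uniform bound just established; integrating by parts, $\int_a^b g\,e(f)=g(a)G(a)+\int_a^b G\,\mathrm{d}g\ll\lambda^{-1/r}\,(\,|g(a)|+\mathrm{Var}\,g\,)\ll\lambda^{-1/r}\,\mathrm{Var}\,g$, which is the claim.

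For the second bound, define the operator $D\phi=\tfrac{1}{2\pi i}\bigl(\phi/f'\bigr)'$. Because $\operatorname{Supp}(g)\subset(a,b)$, all boundary terms vanish at each step, and writing $e(f)'=2\pi i f'\,e(f)$ one gets $I=(-1)^j\int_a^b (D^j g)(x)\,e(f(x))\,\mathrm{d}x$ for every fixed $j\ge0$. The task is then to show $D^j g\ll_j B^{-j+\epsilon}$ under the hypotheses $f'\ge B$, $f^{(i)}\ll B^{1+\epsilon}$ $(i\ge2)$, $g^{(i)}\ll_{a,b,i}1$: I would expand $D^jg$ by the Leibniz and quotient rules into a bounded (depending on $j$) sum of terms, each a product of derivatives of $g$ and of $f^{(i)}$'s divided by a power of $f'$, and verify by induction that every such term has net $B$-power $\le -j+C_j\epsilon$ — each application of $D$ supplies one extra factor $1/f'\ll B^{-1}$ while any derivative it places on $f'$ produces a ratio $f^{(i)}/f'\ll B^{\epsilon}$. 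Integrating the resulting bound over $[a,b]$ gives $I\ll_{a,b,j}(b-a)\,B^{-j+C_j\epsilon}$, and renaming $C_j\epsilon\mapsto\epsilon$ (permissible since $j$ is fixed) yields $I\ll_{a,b,j,\epsilon}B^{-j+\epsilon}$.

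The routine-but-delicate point is the bookkeeping of $D^jg$ in the second part: one must check that no term of the expansion ever acquires a \emph{net positive} power of $B$, which is precisely why the hypothesis controls \emph{all} higher derivatives $f^{(i)}$ ($i\ge2$) by $B^{1+\epsilon}$ and not merely $f''$; a clean way to organize this is a Fa\`a di Bruno–type induction tracking, for each monomial, the number of derivatives landing on $f'$ versus the power of $f'$ in the denominator. In the first part, the only subtlety is insisting on the uniformity of the $g\equiv1$ estimate over subintervals (so that $G$ is uniformly bounded) and flagging that the $r=1$ case, when invoked directly, needs $f'$ monotone.
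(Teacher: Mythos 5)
Your proposal is correct and follows exactly the route the paper itself takes: the paper simply declares the first bound ``standard'' (van der Corput's $r$-th derivative test) and proves the second by repeated integration by parts, which is precisely your operator $D\phi=\tfrac{1}{2\pi i}(\phi/f')'$ argument. Your added care about monotonicity of $f'$ in the $r=1$ case and the $B$-power bookkeeping only makes explicit what the paper leaves implicit.
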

	\begin{proof}
		Proof of the first part of the lemma is standard. For the second part, we use integration by parts. 
	\end{proof}
	The following lemma gives an asymptotic expression for $I$ when the stationary point exists.
	\begin{lemma} \label{stationaryphase}
		Let $0 < \delta < 1/10, X, Y, U, Q>0, Z:= Q+X+Y+b-a+1$, and assume that
		$$ Y \geq Z^{3 \delta}, \, b-a \geq U \geq \frac{Q Z^{\frac{\delta}{2}}}{\sqrt{Y}}.$$
		Assume that $g$  satisfies  
		$$g^{(j)}(t) \ll_{j} \frac{X}{U^{j}} \, \, \, \text{for} \,\,  j=0,1,2,\ldots.$$  
		Suppose that there exists unique $t_{0} \in [a,b]$ such that $h^{\prime}(t_{0})=0$, and the function $f$ satisfies
		$$f^{\prime \prime}(t) \gg \frac{Y}{Q^2}, \, \, f^{(j)}(t) \ll_{j} \frac{Y}{Q^{j}} \, \, \, \, \text{for} \, \, j=1,2,3,\ldots.$$
		Then we have
		$$I = \frac{e^{i f(t_{0})}}{\sqrt{f^{\prime \prime}(t_{0})}} \, \sum_{n=0}^{3 \delta^{-1}A} p_{n}(t_{0}) + O_{A,\delta}\left( Z^{-A}\right), \, p_{n}(t_{0}) = \frac{\sqrt{2 \pi} e^{\pi i/4}}{n!} \left(\frac{i}{2 f^{\prime \prime}(t_{0})}\right)^{n} G^{(2n)}(t_{0}),$$
		where 
		$$ G(t)=g(t) e^{i H(t)}, \text{and} \, H(t)= f(t)-f(t_{0})-\frac{1}{2} f^{\prime \prime}(t_{0})(t-t_{0})^2.$$
		Furthermore, each  $p_{n}$ is a rational function in $h^{\prime}, h^{\prime \prime}, \ldots,$ satisfying the derivative bound
		$$\frac{d^{j}}{dt_{0}^{j}} p_{n}(t_{0}) \ll_{j,n} X \left(\frac{1}{U^{j}}+ \frac{1}{Q^{j}}\right) \left( \left(\frac{U^2 Y}{Q^2}\right)^{-n} + Y^{-\frac{n}{3}}\right).$$
	\end{lemma}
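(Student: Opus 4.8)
The plan is to establish this by the standard Fourier-analytic implementation of the stationary phase principle, essentially as in \cite{BingH}. First I would localise the phase at the stationary point. Writing
\[
f(t)=f(t_0)+\tfrac12 f''(t_0)(t-t_0)^2+H(t),
\]
with $H$ as in the statement (so $H(t_0)=H'(t_0)=H''(t_0)=0$, $H^{(j)}(t)=f^{(j)}(t)$ for $j\ge 3$, and $H^{(j)}(t_0)=f^{(j)}(t_0)\ll Y/Q^j$ for $j\ge 3$), one peels off the constant phase $e^{if(t_0)}$ and absorbs the rest into the amplitude by setting $G(t)=g(t)\,e^{iH(t)}$, so that
\[
I=e^{if(t_0)}\int_a^b G(t)\,e^{\frac{i}{2}f''(t_0)(t-t_0)^2}\,\mathrm{d}t.
\]
A routine preliminary step is to verify that $G$ inherits usable derivative bounds: after a dyadic splitting of $|t-t_0|$, the hypotheses $f^{(j)}\ll Y/Q^j$ and $U\ge QZ^{\delta/2}/\sqrt Y$ control $H$ and hence $G$ on $\mathrm{Supp}(g)$, and this, together with the values of $H^{(j)}(t_0)$ recorded above, is what ultimately dictates the shape of the bound on $p_n$.

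Next I would evaluate the Gaussian integral exactly in Fourier space. Put $u=t-t_0$, $\rho=f''(t_0)\gg Y/Q^2>0$ and $\phi(u)=G(t_0+u)$. Completing the square gives $\widehat{e^{i\rho u^2/2}}(\xi)=\sqrt{2\pi/\rho}\,e^{i\pi/4}e^{-2\pi^2 i\xi^2/\rho}$, so by Parseval
\[
\int_{\mathbb R}\phi(u)\,e^{\frac{i}{2}\rho u^2}\,\mathrm{d}u=\frac{\sqrt{2\pi}\,e^{i\pi/4}}{\sqrt\rho}\int_{\mathbb R}\widehat\phi(\xi)\,e^{-2\pi^2 i\xi^2/\rho}\,\mathrm{d}\xi.
\]
Taylor-expanding $e^{-2\pi^2 i\xi^2/\rho}=\sum_{n<N}\frac1{n!}\big(-2\pi^2 i\xi^2/\rho\big)^n+O\big((\xi^2/\rho)^N\big)$, integrating termwise, and using $\int_{\mathbb R}\widehat\phi(\xi)(2\pi i\xi)^{2n}\,\mathrm{d}\xi=\phi^{(2n)}(0)=G^{(2n)}(t_0)$, one finds that the $n$-th term equals $\tfrac1{\sqrt\rho}\,p_n(t_0)$ with $p_n(t_0)=\frac{\sqrt{2\pi}\,e^{i\pi/4}}{n!}\big(\tfrac{i}{2\rho}\big)^n G^{(2n)}(t_0)$, and summing up recovers the asserted main term $\frac{e^{if(t_0)}}{\sqrt{f''(t_0)}}\sum_n p_n(t_0)$.

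The main obstacle is bounding the two error terms uniformly in $X,Y,U,Q$. I would split the $\xi$-integral at $\xi\asymp\sqrt\rho\,Z^{-\epsilon}$: on the inner range $\xi^2/\rho$ is a small negative power of $Z$, so the hypothesis $Y\ge Z^{3\delta}$ (forcing $\rho\gg Y/Q^2$ to be a large power of $Z$) makes $N=3\delta^{-1}A$ Taylor terms enough to reach accuracy $Z^{-A}$; on the outer range one reverts to the $u$-integral and integrates by parts repeatedly, using $U^2\rho\gg Z^\delta$ (which is $U\ge QZ^{\delta/2}/\sqrt Y$) and the derivative bounds on $G$ to gain an arbitrary power of $Z$. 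Finally, the derivative bounds on $p_n(t_0)$ come from differentiating $\rho^{-n-1/2}G^{(2n)}(t_0)$ in $t_0$: each $\mathrm{d}/\mathrm{d}t_0$ falls either on $g$ (cost $U^{-1}$) or on an $H^{(\ge 3)}$-factor or on $\rho$ (cost $Q^{-1}$, since $\rho'=f'''(t_0)\ll Y/Q^3$), producing the factor $U^{-j}+Q^{-j}$; while distributing the $2n$ derivatives defining $G^{(2n)}(t_0)$ optimally between $g$ (scale $U$) and the $H^{(\ge 3)}$-factors (worth $Y/Q^3$ per three derivatives), then folding in the $n$ powers of $i/2\rho\asymp Q^2/Y$, yields the $\big((U^2Y/Q^2)^{-n}+Y^{-n/3}\big)$ saving, the two terms being the two endpoints of the optimisation. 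I expect the uniform control of the errors and the correct choice of the splitting point to be the only genuinely delicate parts; everything else is the standard machinery.
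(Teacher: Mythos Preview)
The paper does not supply a proof of this lemma at all: it is stated in the preliminaries as a known black-box result (the stationary phase expansion of Blomer--Khan--Young type) and is used later without further justification. So there is nothing in the paper to compare your argument against.

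Your sketch is a faithful outline of the standard proof of this result. The decomposition $f(t)=f(t_0)+\tfrac12 f''(t_0)(t-t_0)^2+H(t)$, the absorption of $e^{iH}$ into the amplitude $G$, the Parseval/Fourier evaluation of the resulting Gaussian integral, the Taylor expansion of $e^{-2\pi^2 i\xi^2/\rho}$ truncated at $N\asymp \delta^{-1}A$ terms, and the integration-by-parts treatment of the large-$\xi$ tail are exactly the ingredients one finds in the original proofs. Your identification of the two genuinely delicate points --- the uniform control of the truncation error (driven by $Y\ge Z^{3\delta}$) and of the tail (driven by $U\ge QZ^{\delta/2}/\sqrt Y$) --- is accurate, and your heuristic for the derivative bound on $p_n(t_0)$ correctly traces the two endpoints $(U^2Y/Q^2)^{-n}$ and $Y^{-n/3}$ to whether the $2n$ derivatives in $G^{(2n)}(t_0)$ fall on $g$ or on the $H^{(\ge 3)}$-factors. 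One small caveat: the step ``extend the integral from $[a,b]$ to $\mathbb{R}$'' deserves a word, since $G$ is supported on $[a,b]$ only through $g$, and the boundary contribution must be shown to be $O(Z^{-A})$ as well; this is routine under the stated hypotheses but should be acknowledged.
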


	\section{ The set-up and sketch of the proof}
	In this section, we will give a set-up to prove Theorem \ref{r>2 thm}. We then give a rough outline for the proof.
\subsection{Approximate functional equation}
	  Let $\pi$, $f$  and $\chi$ be as defined in Theorem \ref{r>2 thm}.  As a first step, we  express $L(1/2, \,  \pi \times f \times \chi)$ in terms of an exponential sum.  In fact, we have the following lemma. 
	\begin{lemma}\label{AF}
		 Let $\pi$, $f$  and $\chi$ be as defined in Theorem \ref{r>2 thm}. Let $R=p^r$ be the conductor of $\chi$. Then, for any $\epsilon >0$, as $R \rightarrow \infty$,  we have
		\begin{align} \label{AFE}
			L \left( \frac{1}{2}, \, \pi \times f \times \chi \right) \ll_{}\sup_{k \ll R^{{3}/{2}+\epsilon}} \, \sup_{ N \leq \frac{R^{3+\epsilon}}{k^2}} \frac{|S_{k}(N)|}{k\sqrt{N}} + R^{-2020},
		\end{align} 
		where
		\begin{align}\label{S_k(N)}
			S_{k}(N): = \mathop{\sum}_{n =1}^{\infty} \, A(n,k) \, \lambda_{f}(n)\chi(n) \, W\left(\frac{n}{N}\right),
		\end{align}
	and  $W$ is a smooth function supported in $[1,\, 2]$ and satisfying $W^{(j)} \ll R^{j\epsilon}$.
	\end{lemma}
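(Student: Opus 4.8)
\textbf{Proof proposal for Lemma \ref{AF}.}

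The plan is to derive the bound from the approximate functional equation for $L(1/2,\pi\times f\times\chi)$, which is a standard consequence of the functional equation alluded to after \eqref{Lseries}. First I would recall that the completed $L$-function $\Lambda(s,\pi\times f\times\chi)$ has arithmetic conductor of size $R^{6}=p^{6r}$ (the $GL(3)\times GL(2)$ Rankin--Selberg conductor being the sixth power of the conductor of $\chi$, since $\pi$ and $f$ are unramified and $\chi$ has conductor $R$), together with a gamma factor depending only on $\pi$ and $f$. Applying the usual contour-shift argument to $\Lambda(s)X^{s}/\Lambda(1/2)$ and choosing the balancing parameter, one obtains
\begin{align*}
L\left(\tfrac12,\pi\times f\times\chi\right)\ll \sum_{\pm}\ \sum_{m,\ell\geq 1}\frac{A(m,\ell)\lambda_f(m)\chi(m)}{(m\ell^{2})^{1/2}}\,V_{\pm}\!\left(\frac{m\ell^{2}}{R^{3}}\right)+R^{-2020},
\end{align*}
where the Dirichlet series in \eqref{Lseries} has been used to identify the coefficients (with $k$ there renamed $\ell$), and $V_{\pm}$ is a smooth weight, essentially $1$ for argument $\ll R^{\epsilon}$ and with rapid decay beyond, obtained as an inverse Mellin transform of a ratio of gamma factors; its derivatives satisfy $V_\pm^{(j)}\ll R^{j\epsilon}$. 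The error term $R^{-2020}$ comes from truncating the smooth weights, using their rapid decay.

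Next I would dyadically decompose the $\ell$-sum and the $m$-sum. Writing $\ell\sim k$ and $m\sim N$ with $k^{2}N\ll R^{3+\epsilon}$, and noting that $V_\pm$ forces $k^{2}N\ll R^{3+\epsilon}$ so in particular $k\ll R^{3/2+\epsilon}$, the contribution of each dyadic block is
\begin{align*}
\frac{1}{k\sqrt N}\left|\sum_{n\geq 1}A(n,k)\lambda_f(n)\chi(n)\,W\!\left(\frac{n}{N}\right)\right|,
\end{align*}
after absorbing the smooth weight $V_\pm(k^2 n/R^3)$ together with the dyadic partition-of-unity bump into a single smooth $W$ supported in $[1,2]$ with $W^{(j)}\ll R^{j\epsilon}$ (here I have used that for fixed $k\sim$ the dyadic value, the function $n\mapsto V_\pm(k^2 n/R^3)$ is smooth and slowly varying on $n\sim N$). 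This is exactly $S_k(N)$ as in \eqref{S_k(N)}. Summing over the $O(\log^{2}R)$ dyadic blocks in $k$ and $N$ only costs $R^{\epsilon}$, which is harmless under the $\ll$ convention, and we arrive at \eqref{AFE} with the supremum over the stated ranges.

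I do not expect any serious obstacle here: the only points needing care are (i) confirming that the arithmetic conductor is indeed $R^{6}$ so that the effective length of the Dirichlet series is $R^{3}$ (whence $N\leq R^{3+\epsilon}/k^2$), which follows from the local computation of the Rankin--Selberg conductor at $p$ for $\chi$ ramified of conductor $p^{r}$ and $\pi\times f$ unramified; and (ii) the bookkeeping that shows the smooth weight coming from the approximate functional equation, after dyadic localization in $k$, can be legitimately merged into a weight $W$ with the claimed derivative bounds. Both are routine, so the ``hard part'' of this lemma is essentially nil --- it is a packaging step, and all the genuine difficulty of Theorem \ref{r>2 thm} lies in estimating $S_k(N)$ itself, which is taken up in the sections that follow.
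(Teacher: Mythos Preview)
Your proposal is correct and follows essentially the same approach as the paper: the paper's own proof is a one-line appeal to the approximate functional equation (Theorem~5.3 and Proposition~5.4 of \cite{IK}), and you have simply spelled out the standard details of that appeal --- identifying the arithmetic conductor as $R^{6}$, truncating the smooth weight, and dyadically localizing to produce the sums $S_k(N)$. There is nothing to add; this lemma is purely a packaging step and your write-up is an accurate expansion of the paper's terse reference.
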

	\begin{proof}
		Proof follows by an application of the functional equation of $L \left( \frac{1}{2}, \pi \times f \times \chi \right)$. We refer to  Theorem 5.3 and  Proposition 5.4 of  \cite{IK} for more details. 
			\end{proof}

	Thus, to establish  Theorem \ref{r>2 thm},  we need to get some cancellations in $S_{k}(N)$  in \eqref{S_k(N)}. 
	\subsection{ Application of  delta symbol} \label{dfi and congruence equation}
	There are three oscillatory factors in the sum $S_{k}(N)$ in \ref{S_k(N)}. Our next task is to separate these oscillations. We accomplish it  using  delta method. 
To this end, we rewrite $S_k(N)$ as 
\begin{align*}
		S_k(N) =  \mathop{\sum \sum}_{\substack{n, \,  m=1 \\ n = m } }^{\infty} A(n,k) \lambda_f(m) \chi(m) W \left(\frac{n}{N}\right) U \left(\frac{m}{N}\right),
\end{align*}
 where $U$ is a smooth  bump function supported in $[1/2,5/2]$, with  $ U(x)=1 \, \text{for} \ x \in [1,2]$ and  $U^{(j)} \ll R^{j\epsilon}$.
Now we detect $n=m$ as fellows:
$$n=m \iff n \equiv m\, \mathrm{mod} \ p^{\ell} \ \ \ \mathrm{and} \ \ \ \frac{n-m}{p^{\ell}}=0,$$
where $\ell$ is a positive integer such that $1\leq \ell <r$ (to be chosen optimally later). This is a crucial step to achieve our goal.  Thus, $S_k(N)$  can be rewritten as  
	
	\begin{equation} \label{congtrick}
		S_k(N) =  \mathop{\sum \sum}_{\substack{n, \, m=1 \\ n \equiv m \, {\mathrm{mod}}\, p^\ell} }^{\infty} A(n,k) \lambda_f(m) \chi(m)\delta \left(\frac{n-m}{p^{\ell}}\right) W \left(\frac{n}{N}\right) U \left(\frac{m}{N}\right),
	\end{equation}
where $\delta$ is the delta symbol defined in Subsection \ref{circlemethod}.   Now,  detecting  the congruence equation $n \, \equiv \, m \, {\mathrm{mod}}\, p^\ell$  using the additive characters, i.e. 
$$\delta(n \, \equiv \, m \, {\mathrm{mod}}\, p^\ell)= \frac{1}{p^{\ell}}\sum_{b \, {\mathrm{mod}}\, p^{\ell}}e\left(\frac{b(n-m)}{p^{\ell}}\right),$$
 and  applying Lemma \ref{deltasymbol} with $Q=R^{\epsilon}\sqrt{{N}/{p^{\ell}}} $, we arrive at
	\begin{align}\label{aftercircle}
		S_k(N)=& \frac{1}{Q p^\ell} \int_{\mathbb{R}}W_1(x) \sum_{1\leq q \leq Q} \frac{g(q,x)}{q} \sideset{}{^\star} \sum_{a \, {\mathrm{mod}}\, q} \sum_{b \, {\mathrm{mod}}\, p^{\ell}} \notag \\
		& \times \sum_{n=1}^{\infty} A(n,k) e\left(\frac{(a+bq)n}{p^\ell q}\right) e\left(\frac{nx}{p^\ell q Q}\right) W\left(\frac{n}{N}\right) \notag \\
		& \times \sum_{m=1}^{\infty} \lambda_f(m) \chi(m) e \left(\frac{-(a+bq)m}{p^\ell q}\right) e\left(\frac{-mx}{p^\ell qQ} \right)U\left(\frac{m}{N}\right) \mathrm{d}x+O(R^{-2020}). 
	\end{align}
	
	\subsection{Sketch of the proof} In this subsection, we will give a rough sketch of the proof. For simplicity, let's  consider the generic cases, i.e., $N=R^3$, $n \asymp N$, $m \asymp N$, $k=1$, $|x| \asymp 1$, $q \asymp  Q$ and $(q,p)=1$ in \eqref{aftercircle}.  Thus we essentially have the following expression for $S_k(N)$:
	\begin{align}\label{sketchy SN}
		\frac{1}{Q^2 p^\ell}\sum_{b \, {\rm mod}\, p^{\ell}}\sum_{q \asymp Q} \sideset{}{^\star}\sum_{a \, {\rm mod}\, q} \sum_{n \sim N} A(n,1) e\left(\frac{(a+bq)n}{p^\ell q}\right) \sum_{m \sim N} \lambda_f(m) \chi(m) e \left(\frac{-(a+bq)m}{p^\ell q}\right).
	\end{align}
Notice that we have   ignored the integral over $x$, as it has no oscillations in the generic case. On estimating the above expression trivially,  we get $S_k(N) \ll N^2$. Our aim is show $S_k(N) \ll \sqrt{N}R^{3/2-3/20}$. In other words, we need to save $N^2/(\sqrt{N}R^{3/2-3/20})=NR^{3/20}$ over the trivial bound $N^2$ in \eqref{sketchy SN}.  \\
 Our next step is to  dualize the sum over $n$ and $m$ using summation formulae. We accomplish it in Section \ref{summation formula}. In fact,  on applying the $GL(3)$ Voronoi formula to the sum over $n$ in \eqref{sketchy SN}, we, roughly, arrive at the following expression: 
 $$S_1:=\sum_{n \sim N} A(n,1) e\left(\frac{(a+bq)n}{p^\ell q}\right) \approx \frac{N^{2/3}}{Q p^{\ell}} \sum_{n_{2} \ll \sqrt{Np^{3\ell}}} \frac{A(1,n_{2})}{n_{2}^{1/3}} \, S\left(\overline{(a+bq)}, n_{2};qp^{\ell}\right).$$
 See Subsection \ref{gl3 formula} for more details. An application of  the $GL(2)$ Voronoi formula to the $m$-sum gives us 
 $$S_2:=\sum_{m \sim N} \lambda_f(m) \chi(m) e \left(\frac{-(a+bq)m}{p^\ell q}\right)\approx \frac{N^{3/4}}{ p^r  \sqrt{q}} 	\sideset{}{^\star}\sum_{\beta \, {\rm mod} \, p^r} \bar{\chi}(-\beta) \sum_{ m \ll p^{2r-\ell}} \frac{\lambda_{f}(m)}{m^{1/4}}e\left(\frac{\overline{c} m}{p^{r} q}\right),$$
 where $c=p^{r-\ell}(a+bq)+q \beta$. See Subsection \ref{gl2 formula} for full details.  Thus, we arrive at the following expression of $S_k(N)$:
 $$S_k(N) \rightsquigarrow \frac{N^{17/12}}{Q^{7/2}p^{r+2\ell}}\sum_{q \asymp Q}\sum_{n_{2} \ll \sqrt{Np^{3\ell}}} \frac{A(1,n_{2})}{n_{2}^{1/3}} \sum_{ m \ll p^{2r-\ell}} \frac{\lambda_{f}(m)}{m^{1/4}}\mathcal{C}(...),$$
 where $$\mathcal{C}(...)= \sideset{}{^\star}\sum_{a \, {\mathrm{mod}}\, q} \sum_{b \, {\mathrm{mod}}\, p^{\ell}}  	\sideset{}{^\star}\sum_{\substack{\beta \, {\rm mod} \, p^{r} }} \bar{\chi}\left(-\beta\right) S\left(\overline{(a+bq)}, n_{2};qp^{\ell}\right)e\left(\frac{\overline{c} m}{p^{r} q}\right),$$
 in which we seek square root cancellations. We analyze it in Section \ref{sum over a and b} and we get the following expression:
 $$\mathcal{C}(...) \approx p^{(r+\ell)/2}qe\left(-\frac{ n_2\overline{m} }{q }\right) \mathcal{C}_1(...),$$
  where $\mathcal{C}_1(...)$ is a character sum modulo $p^{\ell}$ in which we still need to get square root cancellations which we get  in Section \ref{rest cancellations for c}.  In the next step we apply the  Cauchy inequality followed by the Poisson to the sum over $n_2$ (See Section \ref{cauchy and poisson} for details).  The Cauchy inequality transforms $S_k(N)$ into 
 $$S_k(N) \ll \frac{N^{17/12}(Np^{3\ell})^{1/12}}{Q^{5/2}p^{r/2+3\ell/2}} \left(\sum_{n_{2} \ll \sqrt{Np^{3\ell}}} \Big| \sum_{q \asymp Q} \sum_{ m \ll p^{2r-\ell}} \frac{\lambda_{f}(m)}{m^{1/4}}e\left(-\frac{ n_2\overline{m} }{q }\right) \mathcal{C}_1(...)\Big|^2\right)^{1/2}.$$
 Next  we apply the Poisson summation formula to the sum over $n_2$ (see Subsection \ref{pois}). We observe that the  ``arithmetic  conductor" is of size $p^{\ell} Q^2$. Thus  we see that the sum over $n_2$ transfers into 
 $$\frac{\sqrt{Np^{3\ell}}}{p^{r+\ell/2} Q^2} \mathop{\sum \sum}_{q, q^{ \prime} \sim Q} \mathop{\sum \sum}_{m,m^\prime \sim p^{2r-\ell}} \sum_{n_{2} \ll \frac{p^\ell Q^2}{\sqrt{Np^{3\ell}}} } \left|\mathfrak{C}(...)\right|, $$
 where 
 \begin{align*}
 	\mathfrak{C}(...)&= \sum_{\nu_1 \, {\rm mod} \; p^\ell  }\mathcal{C}_1 \left(...\right) \overline{\mathcal{C}_1 \left(...\right)}e \left(\frac{\nu _1 n_{2}}{p^{\ell} }\right)\sum_{\nu_2 \, {\rm mod} \; qq^\prime  } e \left(\frac{(\overline{m^\prime}q-\overline{m}q^\prime +\nu _2) n_{2}}{qq^\prime}\right),
 \end{align*}
  which we analyze  in  Section \ref{rest cancellations for c}. For $n_2=0$,  we get $q=q^\prime$, $m=m^\prime$ (essentially) and 
$$\mathfrak{C}_0(...) \ll p^{2\ell}Q^2.$$
Hence, for $n_2=0$, we get (see Section \ref{omega zero and nonzero } for more  details) 
\begin{align}
	S_{k,0}(N) \ll \frac{N^{17/12}(Np^{3\ell})^{1/12}}{Q^{5/2}p^{r/2+3\ell/2}}\frac{(Np^{3\ell})^{1/4}}{p^{r/2+\ell/4}Q} \left(Qp^{2r-\ell}p^{2\ell}Q^2 \right)^{1/2}\ll R^{\epsilon}N^{1/2}p^{3r/4+3\ell/4}.
\end{align}
For, $n_2 \neq 0$, we analyze $\mathfrak{C}(...)$ differently  in Section \ref{rest cancellations for c} and we get (on average)
$$\mathfrak{C}_{\neq 0}(...) \ll p^{3\ell/2}.$$ 
We note that, we are  saving $q$  extra due to the additive character $e\left(-{ n_2\overline{m} }/{q }\right)$ which we gives  us a congruence condition modulo $qq^\prime$. Thus we get (see Section \ref{omega zero and nonzero } for more  details) 
\begin{align}
	S_{k,\neq 0}(N) \ll \frac{N^{17/12}(Np^{3\ell})^{1/12}}{Q^{5/2}p^{r/2+3\ell/2}}\frac{(Np^{3\ell})^{1/4}}{p^{r/2+\ell/4}Q} \frac{(p^\ell Q^2)^{1/2}}{(Np^{3\ell})^{1/4}}\left(Q^2p^{4r-2\ell}p^{3\ell/2} \right)^{1/2}\ll R^{\epsilon}{p^{r-\ell/2}kN^{3/4}}.
\end{align}

	Upon choosing $\ell$  optimally, we get 
	 $$S_k(N)\ll \sqrt{N}p^{3r/4+3[4r/5]/4+\epsilon},$$
	and consequently, 
	$$L(1/2,\pi \times f \times \chi) \ll p^{3r/4+3[4r/5]/4+\epsilon}\ll(p^r)^{3/2- 3/20+\epsilon} .$$
	Hence  we get Theorem \ref{r>2 thm}. 
	
	\section{Voronoi summation formulae} \label{summation formula}
	
	In this section, we apply summation formulae to the $n$-sum  and $m$-sum in \eqref{aftercircle}. 
\subsection{$GL(3)$ Voronoi formula} \label{gl3 formula} In this subsection, we will  analyze the sum over $n$
\begin{align}\label{n sum }
	S_1:=\sum_{n=1}^{\infty} A(n,k) e\left(\frac{(a+bq)n}{p^{\ell} q}\right) e\left(\frac{nx}{p^{\ell}qQ}\right) W\left(\frac{n}{N}\right)
\end{align}
in \eqref{aftercircle} using  the $GL(3)$ Voronoi summation formula.  Let $q=p^{\ell ^\prime}q^\prime $ with $(p,q^\prime)=1$ and $\ell^\prime \geq 0$. Thus it follows that 
$$(a+bq, \,p^\ell q)=(a+bq,\, p^{\ell+\ell^\prime}),$$
as $(a,\, q)=1$. 
Moreover, if $\ell^\prime >0$, then $(a+bq,\, p^{\ell+\ell^\prime})=1$. In the other case, i.e., $\ell^\prime =0$ or $(q,\, p)=1$,  let $(a+bq, \,p^\ell q)=p^{\ell_1}$, with $0 \leq \ell_1 \leq \ell$. On applying  Lemma \ref{gl3voronoi} to the $n$-sum in \eqref{aftercircle} with the modulus $p^{\ell -\ell_1}q$ and  $g(n)=e\left({nx}/{p^\ell q Q}\right) W\left({n}/{N}\right)$, we arrive at 
\begin{align}\label{dual gl3}
	p^{\ell-\ell_1} q \sum_{\pm} \sum_{n_{1} \vert \frac{ p^{\ell}qk}{p^{\ell_1}}} \sum_{n_{2}=1}^{\infty} \frac{A(n_{1},n_{2})}{n_{1}n_{2}} S(k(\overline{(a+bq)/p^{\ell_1}}), \pm n_{2};qp^{\ell-\ell_1}k/n_{1}) G_{\pm} \left(\frac{n_{1}^{2} n_{2}}{(q p^{\ell-\ell_1})^{3}k}\right).
\end{align}
Next we will analyze the integral transform $G_{\pm}(y)$. We have the following lemma. 
\begin{lemma}
	Let $G_{\pm}(y)$ be the integral transform as defined in \eqref{gl3 integral transform}. Let $y=\frac{n_1^2n_2}{(qp^{\ell-\ell_1})^3k}$. Then 
	$G_{\pm}(y)$ is negligibly small unless  $n_1^2n_2 \ll  N^{1/2}p^{3\ell/2-3\ell_1}kR^{\epsilon} =: N_{0}$. In this range, we have 
	\begin{align}
		G_{\pm} \left(\frac{n_1^2n_2}{(qp^{\ell-\ell_1})^3 k}\right) = \left(\frac{n_1^2n_2N}{(qp^{\ell-\ell_1})^3 k}\right)^{1/2}  	I_{1}(n_1^2n_2,q,x),
	\end{align}
where $I_{1}(n_1^2n_2,q,x)$ is a integral transform defined in \eqref{integral of gl3}.
\end{lemma}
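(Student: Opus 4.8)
The plan is to open up $G_\pm(y)$ through its Mellin--Barnes representation \eqref{gl3 integral transform} and make the dependence on the main parameter $N$ explicit. When Lemma \ref{gl3voronoi} is applied to the $n$-sum the test function is $g(u)=e\big(ux/(p^{\ell}qQ)\big)W(u/N)$; substituting $u\mapsto Nu$ in the Mellin transform gives $\tilde g(-s)=N^{-s}V(-s)$, where $V(-s)=\int_0^\infty e(\beta t)\,W(t)\,t^{-s-1}\,\mathrm{d}t$ and $\beta:=Nx/(p^{\ell}qQ)$ is the effective frequency; since $|x|\ll R^\epsilon$ this forces $q(1+|\beta|)\ll Q=R^\epsilon\sqrt{N/p^{\ell}}$ for every admissible $q$. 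Writing $y=n_1^2n_2/\big((qp^{\ell-\ell_1})^3k\big)$ we obtain $G_\pm(y)=\frac{1}{2\pi i}\int_{(\sigma)}(Ny)^{-s}\gamma_\pm(s)V(-s)\,\mathrm{d}s$, with $V(-s)$ entire and rapidly decaying on vertical lines.

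Next I would shift the contour to $\Re s=-1/2$. This is legitimate: the poles of $\gamma_\pm(s)$, arising from the numerator Gamma factors, all lie in $\Re s<-1/2$ since $|\Re\alpha_i|<1/2$, so no residue is picked up. On the line $\Re s=-1/2$ one has $(Ny)^{-s}=(Ny)^{1/2}(Ny)^{-1/2-s}$ with $\big|(Ny)^{-1/2-s}\big|=1$, which produces exactly the prefactor in the statement; hence
\[
G_\pm(y)=\left(\frac{n_1^2n_2N}{(qp^{\ell-\ell_1})^3k}\right)^{1/2}I_1(n_1^2n_2,q,x),\qquad I_1:=\frac{1}{2\pi i}\int_{(-1/2)}(Ny)^{-1/2-s}\,\gamma_\pm(s)\,V(-s)\,\mathrm{d}s,
\]
and this integral is what is recorded as \eqref{integral of gl3}.

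It then remains to bound $I_1$ and to locate its support. Inserting the definition of $V(-s)$ and interchanging the $s$- and $t$-integrations rewrites $G_\pm(y)=\int_0^\infty e(\beta t)\,W(t)\,t^{-1}\,\mathcal{K}(Nyt)\,\mathrm{d}t$, where $\mathcal{K}(w):=\frac{1}{2\pi i}\int_{(-1/2)}w^{-s}\gamma_\pm(s)\,\mathrm{d}s$ is the $GL(3)$ Hankel kernel; shifting the contour and invoking Stirling gives the standard asymptotics $\mathcal{K}(w)\ll w^{1/2}$ for $w\ll 1$ and $\mathcal{K}(w)=c_\pm\,w^{2/3}e\big(\pm 3w^{1/3}\big)+O\big(w^{1/3}\big)$ for $w\gg 1$. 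Since the $t$-integration runs over the compact set $\mathrm{supp}\,W\subset[1,2]$, one is left to analyse the phase $\beta t\pm 3(Nyt)^{1/3}$, whose $t$-derivative on $[1,2]$ is $\gg\max\big(|\beta|,(Ny)^{1/3}\big)$ unless $|\beta|\asymp(Ny)^{1/3}$; hence if $Ny\gg(1+|\beta|)^3R^\epsilon$ then repeated integration by parts makes $G_\pm(y)$ negligible. Because $q(1+|\beta|)\ll Q$, the surviving range $Ny\ll(1+|\beta|)^3R^\epsilon$ then forces $n_1^2n_2\ll(qp^{\ell-\ell_1})^3k(1+|\beta|)^3R^\epsilon/N\ll p^{3\ell/2-3\ell_1}kN^{1/2}R^\epsilon=N_0$, which is the asserted support. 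In that range the first part of Lemma \ref{derivative bound} with $r=2$, using $\mathcal{K}(Nyt)\ll(1+Nyt)^{2/3}$ and $\partial_t^2\big[(Nyt)^{1/3}\big]\asymp(Ny)^{1/3}$ throughout $[1,2]$ (and $\mathcal{K}(w)\ll w^{1/2}$ when $w\ll1$), gives $|G_\pm(y)|\ll(Ny)^{1/2}R^\epsilon$, i.e.\ $I_1\ll R^\epsilon$, together with the derivative bounds of $I_1$ in $x$, $q$ and $n_1^2n_2$ needed in the sequel.

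I expect the only genuinely delicate point to be this last stationary-phase step in the regime where the delta-method variable truly oscillates, i.e.\ $|\beta|$ large (equivalently $q$ small): there $Ny$ can be as large as $\asymp|\beta|^3$, and one has to check that the oscillation $e(\beta t)$ cancels against the $GL(3)$ Hankel oscillation $e(\pm 3(Nyt)^{1/3})$ — with the stationary point landing in $[1,2]$ exactly when $Ny\asymp|\beta|^3$ — so that $G_\pm(y)$ is still of the claimed size $(Ny)^{1/2}$ rather than the ``naive'' $(Ny)^{2/3}$. One also needs the Hankel-kernel asymptotics above with enough uniformity near $w\asymp1$, which is standard but requires a little contour surgery. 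The rest is bookkeeping: elementary, if somewhat tedious.
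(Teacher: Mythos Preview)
Your argument is correct, but the route to the support condition differs from the paper's. You pass to the physical side by swapping the $s$- and $t$-integrals, invoke the explicit large-argument asymptotics of the $GL(3)$ Hankel kernel $\mathcal{K}(w)\sim c_\pm w^{2/3}e(\pm 3w^{1/3})$, and then run stationary phase in $t$ against the delta-method oscillation $e(\beta t)$; this shows negligibility when $Ny\gg(1+|\beta|)^3R^\epsilon$ and recovers the size $(Ny)^{1/2}$ otherwise. The paper instead stays entirely on the Mellin side: it first does integration by parts in the inner $z$-integral to localize $|\tau|\asymp N|x|/(p^\ell qQ)$, then uses only the Stirling bound $\gamma_\pm(\sigma+i\tau)\ll(1+|\tau|)^{3(\sigma+1/2)}$ and shifts $\sigma\to\infty$ to kill $G_\pm(y)$ when $n_1^2n_2\gg N_0$, before moving back to $\sigma=-1/2$ for the formula. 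The paper's approach is shorter and uses less input (no kernel asymptotics, no stationary-phase cancellation), while yours is more explicit about the underlying oscillation structure --- in particular it makes transparent why the amplitude is $(Ny)^{1/2}$ and not $(Ny)^{2/3}$, information the paper only extracts later (Section~7) when analysing $\mathcal{J}(\ldots)$. Either way the contour shift to $\Re s=-1/2$ that produces the prefactor and defines $I_1$ is identical in both treatments.
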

\begin{proof}
	Let's recall from \eqref{gl3 integral transform} that 
	\begin{align}\label{Gy}
	G_{\pm}(y) &= \frac{1}{2 \pi i} \int_{(\sigma)} y^{-s} \, \gamma_{\pm}(s) \, \int_{0}^{\infty} e\left(\frac{xz}{p^{\ell}qQ}\right) W\left(\frac{z}{N}\right)z^{-s-1} \, \mathrm{d}z\, \mathrm{d}s  \notag\\
	&= \frac{1}{2 \pi } \int_{-\infty}^{\infty} (Ny)^{-\sigma-i\tau} \, \gamma_{\pm}(\sigma+i\tau) \, \int_{0}^{\infty} e\left(\frac{Nxz}{p^{\ell}qQ}\right)W(z)z^{-\sigma-1-i\tau} \, \mathrm{d}z\, \mathrm{d}\tau 
\end{align}
	On applying integration by parts, we infer that the $z$-integral is negligibly small unless \begin{align}\label{size of tau}
		|\tau| \asymp N|x|/(p^{\ell} qQ).
	\end{align}
	Using the  Stirling formula, for $\sigma \geq -1/2$, we have  
	$$\gamma_{\pm}(\sigma+i\tau) \ll_{\pi,\sigma} (1+|\tau|)^{3(\sigma+1/2)}.$$
	Thus, on plugging this bound into \eqref{Gy}, we get 
\begin{align}
		G_{\pm}(y) \ll \left(\frac{N}{p^{\ell}qQ}\right)^{5/2}\left(\frac{N^3}{Nyp^{3\ell}q^3Q^3}\right)^{\sigma} \ll \left(\frac{Q}{q}\right)^{5/2}\left(\frac{Q^3}{Nyq^3}\right)^{\sigma}. 
\end{align}
Thus, on moving the contour to  $\sigma $ sufficiently large (towards $\infty$) and  taking $y=\frac{n_1^2n_2}{(qp^{\ell-\ell_1})^3k}$, we see that $G_{\pm}(y) $ is negligibly small if 
\begin{align}\label{N0}
	n_1^2n_2 \gg R^{\epsilon}\frac{(p^{\ell-\ell_1}Q)^{3}k}{N}=N^{1/2}p^{3\ell/2-3\ell_1}kR^{\epsilon} =: N_{0}.
\end{align}
In the complimentary range, we move the contour to $\sigma=-1/2$, to get 
\begin{align}
		G_{\pm} \left(\frac{n_1^2n_2}{(qp^{\ell-\ell_1})^3 k}\right) = \left(\frac{n_1^2n_2N}{(qp^{\ell-\ell_1})^3 k}\right)^{1/2}  	I_{1}(n_1^2n_2,q,x) ,
\end{align}
where 
\begin{align}\label{integral of gl3}
	I_{1}(n_1^2n_2,q,x):=	\frac{1}{2 \pi } \int_{-\infty}^{\infty}  \left(\frac{n_1^2n_2N}{(qp^{\ell-\ell_1})^3 k}\right)^{-i\tau} \, \gamma_{\pm}(-1/2+i\tau)\widetilde{W}(\tau, x )  \mathrm{d}\tau 
\end{align}
and 
\begin{align}
	\widetilde{W}(\tau, x )= \int_{0}^{\infty} e\left(\frac{Nxz}{p^{\ell}qQ}\right)W(z)z^{-1/2-i\tau} \, \mathrm{d}z. 
\end{align}
Hence, we have the lemma. 
\end{proof}
Thus, on applying the above lemma  to \eqref{dual gl3}, we arrive at  
\begin{align*}
	S_1= \frac{\sqrt{N}p^{\ell_1/2}}{(qp^{\ell})^{1/2} k^{1/2}}  \sum_{\pm} \sum_{n_{1} \vert \frac{ p^{\ell}qk}{p^{\ell_1}}} \sum_{n_{2}=1}^{\infty} \frac{A(n_{1},n_{2})}{n_{2}^{1/2}} S(k(\overline{(a+bq)/p^{\ell_1}}), \pm n_{2};qp^{\ell-\ell_1}k/n_{1})I_1(...).
\end{align*}
We conclude  this subsection by recording the above analysis in the following lemma. 
\begin{lemma}\label{gl3 }
	Let $S_1$ be as in \eqref{n sum }. Let $(a+bq,p^{\ell}q)=p^{\ell_1}$. Then,  we have 
\begin{align*}
	S_1= \frac{\sqrt{N}p^{\ell_1/2}}{(qp^{\ell})^{1/2} k^{1/2}}  \sum_{\pm} \sum_{n_{1} \vert \frac{ p^{\ell}qk}{p^{\ell_1}}} \sum_{n_{2}=1}^{\infty} \frac{A(n_{1},n_{2})}{n_{2}^{1/2}} S(k(\overline{(a+bq)/p^{\ell_1}}), \pm n_{2};qp^{\ell-\ell_1}k/n_{1})I_1(...),
\end{align*}
	where $N_0$ and $ I_{1}(...)$ are as in \eqref{N0} and \eqref{integral of gl3} respectively. 
\end{lemma}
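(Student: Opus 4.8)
The plan is to apply the $GL(3)$ Voronoi summation formula (Lemma \ref{gl3voronoi}) directly to $S_1$, then insert the asymptotic analysis of the integral transform $G_\pm$ that was just established. First I would fix the local structure at $p$ by writing $q = p^{\ell'}q'$ with $(p,q')=1$; since $(a,q)=1$ the greatest common divisor $(a+bq, p^\ell q)$ is supported only at $p$, and equals $(a+bq, p^{\ell+\ell'})$. I would dispose of the case $\ell' > 0$ quickly: there $b q$ is divisible by $p$ while $a$ is not, so $(a+bq,p)=1$ and the gcd is $1$, i.e.\ $\ell_1 = 0$; the remaining substantive case is $\ell'=0$, i.e.\ $(q,p)=1$, where one sets $(a+bq,p^\ell q) = p^{\ell_1}$ with $0 \le \ell_1 \le \ell$. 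In either case one writes the phase $e((a+bq)n/(p^\ell q))$ with reduced fraction of denominator $p^{\ell-\ell_1}q$ and numerator $\overline{(a+bq)/p^{\ell_1}}$ coprime to it, so that Lemma \ref{gl3voronoi} applies with $a \mapsto (a+bq)/p^{\ell_1}$, $q \mapsto p^{\ell-\ell_1}q$, and the smooth weight $g(n) = e(nx/(p^\ell q Q))W(n/N)$.

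Applying Lemma \ref{gl3voronoi} produces exactly the dual sum displayed in \eqref{dual gl3}: the leading factor $p^{\ell-\ell_1}q$, the divisor sum $n_1 \mid p^{\ell-\ell_1}qk$, the $n_2$-sum with Fourier coefficients $A(n_1,n_2)/(n_1 n_2)$, the Kloosterman sum $S(k\,\overline{(a+bq)/p^{\ell_1}}, \pm n_2; p^{\ell-\ell_1}qk/n_1)$, and the integral transform $G_\pm(n_1^2 n_2/((p^{\ell-\ell_1}q)^3 k))$. Then I would invoke the preceding lemma on $G_\pm$: it shows $G_\pm(y)$ is negligible unless $n_1^2 n_2 \ll N_0 = N^{1/2}p^{3\ell/2 - 3\ell_1}k R^\epsilon$, and in that range $G_\pm(y) = (n_1^2 n_2 N/((p^{\ell-\ell_1}q)^3 k))^{1/2} I_1(n_1^2 n_2, q, x)$. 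Substituting this in, the factor $(n_1^2 n_2)^{1/2}$ combines with the $1/(n_1 n_2)$ from Voronoi to give $1/n_2^{1/2}$ (the $n_1$ fully cancels), and the powers of $p^{\ell-\ell_1}q$ and $k$ recombine: $p^{\ell-\ell_1}q \cdot ((p^{\ell-\ell_1}q)^3 k)^{-1/2} \cdot N^{1/2} = \sqrt N\, (p^{\ell-\ell_1}q)^{-1/2}k^{-1/2} = \sqrt N\, p^{\ell_1/2}(p^\ell q)^{-1/2}k^{-1/2}$, which is precisely the stated prefactor. This yields the claimed identity for $S_1$.

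I do not expect any genuine obstacle here — the lemma is essentially bookkeeping, assembling Lemma \ref{gl3voronoi} with the $G_\pm$-asymptotic lemma. The one point requiring a little care is the gcd analysis at the prime $p$: one must correctly track how $(a+bq, p^\ell q)$ behaves as $b$ ranges mod $p^\ell$ and $q$ ranges up to $Q$ (in particular the clean split according to whether $p \mid q$), and make sure the reduced-fraction rewriting of the additive character feeds the right coprime residue into the Kloosterman sum argument $\overline{(a+bq)/p^{\ell_1}}$. A secondary point is simply noting that the $\pm$ sign, the divisor sum over $n_1$, and the integral $I_1(\dots)$ are carried along unchanged, and that truncating $n_1^2 n_2 \le N_0$ costs only a negligible error by the rapid decay established for $G_\pm$.
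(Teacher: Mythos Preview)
Your proposal is correct and follows essentially the same approach as the paper: reduce the additive character to lowest terms by extracting the $p$-power gcd $p^{\ell_1}$ (with the case split according to whether $p\mid q$), apply Lemma \ref{gl3voronoi} with modulus $p^{\ell-\ell_1}q$ to obtain \eqref{dual gl3}, and then substitute the preceding lemma's asymptotic for $G_\pm$ to collapse the prefactors. Your explicit algebraic verification of the prefactor $\sqrt{N}\,p^{\ell_1/2}(qp^\ell)^{-1/2}k^{-1/2}$ is in fact more detailed than the paper's, which simply records the outcome.
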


	\subsection{$GL(2)$ Voronoi summation formula} \label{gl2 formula}
In this step, we apply the $GL(2)$ Voronoi formula to the sum over $m$ in \eqref{aftercircle}.  In fact, we have the following lemma.
\begin{lemma}\label{gl2}
	Let $S_2$ denotes the sum over $m$ in \eqref{aftercircle}. Let $q=q^\prime p^{\ell^\prime}$. Then we have  
	\begin{align} \label{gasussum}
		S_2=\frac{2\pi i^{k_f}N^{3/4}}{ \tau(\bar{\chi})  \sqrt{qp^{r-\ell_2}}} 	\sideset{}{^\star}\sum_{\beta \, {\rm mod} \, p^r} \bar{\chi}(-\beta) \sum_{1\leq m \ll M_0} \frac{\lambda_{f}(m)}{m^{1/4}}e\left(\frac{\overline{(c/p^{\ell_2})} m}{p^{r-\ell_2} q}\right)  I_{2}(q,m,x),
	\end{align}
	where $c=p^{r-\ell}(a+bq)+q \beta$,  $(c,p^rq)=p^{\ell_2}$,  $M_0=R^{\epsilon}p^{2r-\ell-2\ell_2}$ and
	\begin{align} \label{integral of gl2 }
		I_2(q,m,x)= \int_{0}^{\infty } U(y)e\left(\frac{-Nxy}{p^{\ell}qQ}\right) e\left(\pm\frac{2\sqrt{Nmy}}{p^{r-\ell_2} q}\right) \mathrm{d}y.
	\end{align}
\end{lemma}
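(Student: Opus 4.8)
The plan is to first strip off the Dirichlet character $\chi$ by opening it into additive characters via a Gauss sum, then merge the resulting additive character with the one already present in $S_2$, and finally apply the $GL(2)$ Voronoi summation formula of Lemma \ref{gl2voronoi}. Since $\chi$ is primitive modulo $p^r$, for every integer $m$ one has
\begin{align*}
	\chi(m) = \frac{1}{\tau(\bar\chi)}\,\sideset{}{^\star}\sum_{\beta \, {\rm mod}\, p^r} \bar\chi(-\beta)\, e\left(\frac{-\beta m}{p^r}\right),
\end{align*}
the $\beta$-sum being automatically restricted to $(\beta,p)=1$ because $\bar\chi$ is supported there. Substituting this into the definition of $S_2$ and combining $e(-\beta m/p^r)$ with $e(-(a+bq)m/(p^\ell q))$ over the common denominator $p^r q$ produces the single additive character $e(-cm/(p^rq))$ with $c = p^{r-\ell}(a+bq)+q\beta$, exactly as in the statement.

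Next I would perform the modulus reduction. Writing $q = q'p^{\ell'}$ with $(q',p)=1$ and using $(a,q)=1$, one checks that $(c,q')=1$, so $(c,p^rq)$ is a power of $p$, say $p^{\ell_2}$; dividing it out turns the character into $e\bigl(-(c/p^{\ell_2})m/(p^{r-\ell_2}q)\bigr)$ with numerator and denominator coprime. Now Lemma \ref{gl2voronoi} applies with modulus $p^{r-\ell_2}q$ and test function $g(y)=e(-yx/(p^\ell qQ))\,U(y/N)$, dualizing the $m$-sum into
\begin{align*}
	\frac{2\pi i^{k_f}}{p^{r-\ell_2}q}\sum_{m\ge 1}\lambda_f(m)\,e\left(\frac{\overline{(c/p^{\ell_2})}\,m}{p^{r-\ell_2}q}\right) h(m),
\end{align*}
where $h$ is the Hankel-type transform of $g$ against $J_{k_f-1}$.

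It then remains to identify the shape and support of $h(m)$. After the substitution $y=Nv$, inserting the asymptotic expansion of $J_{k_f-1}$ from Lemma \ref{bessel properties} extracts the oscillation $e\bigl(\pm 2\sqrt{Nvm}/(p^{r-\ell_2}q)\bigr)$ together with an amplitude of size $(p^{r-\ell_2}q)^{1/2}(Nm)^{-1/4}$ and a smooth weight whose derivatives obey the usual bounds and can be absorbed into $U$; together with the Jacobian factor $N$ this gives $h(m)\asymp N^{3/4}(p^{r-\ell_2}q)^{1/2}m^{-1/4}I_2(q,m,x)$. Comparing the two pieces of the phase derivative, namely $Nx/(p^\ell qQ)\asymp \sqrt{N/p^\ell}/q$ and $\sqrt{Nm}/(p^{r-\ell_2}q)$, and integrating by parts repeatedly (first part of Lemma \ref{derivative bound}) shows the $v$-integral, hence $h(m)$, is negligible unless these are comparable, i.e. unless $m\ll R^\epsilon p^{2r-\ell-2\ell_2}=M_0$. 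Collecting the constants $1/\tau(\bar\chi)$, $2\pi i^{k_f}$, $1/(p^{r-\ell_2}q)$ and $N^{3/4}(p^{r-\ell_2}q)^{1/2}$ yields \eqref{gasussum}. The only genuinely delicate point is the arithmetic bookkeeping: tracking the valuation $\ell_2$ (which depends on $a,b,q,\beta$) while preserving the coprimality required to invoke Voronoi, and disposing of the edge cases where $\ell_2$ is comparable to $r$ (so that $p^{r-\ell_2}q$ loses all its $p$-part); the Bessel analysis and the truncation to $m\ll M_0$ are routine stationary-phase bookkeeping.
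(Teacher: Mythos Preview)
Your proposal is correct and follows essentially the same route as the paper: open $\chi$ via the Gauss sum, merge the additive characters into $e(-cm/(p^rq))$, reduce the modulus by $p^{\ell_2}=(c,p^rq)$, apply the $GL(2)$ Voronoi formula, and then use the large-argument Bessel asymptotics plus integration by parts to extract $I_2$ and truncate at $m\ll M_0$. The paper's only additional remarks are the explicit observations that $(\beta,p)>1$ forces $S_2=0$ and that $\ell'=0$ implies $\ell_2=0$, both of which you implicitly have.
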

\begin{proof}
	Firstly, we expand $\chi(m)$ in terms of additive characters. In fact, we have 
	\begin{equation*}
		\chi(m) = \frac{1}{\tau(\bar{\chi})} \sum_{\beta \, \mathrm{mod} \, p^r} \bar{\chi}(\beta) e\left(\beta m/p^r\right),
	\end{equation*}
	where $\tau{(\bar{\chi})}$ is the Gauss sum associated to $\bar{\chi}$. Therefore the $m$-sum in \eqref{aftercircle} transforms into
	\begin{align}\label{m sum after expanding x}
		S_{2}:&=\sum_{m=1}^{\infty} \lambda_f(m) \chi(m) e \left(\frac{-(a+bq)m}{p^\ell q}\right) e\left(\frac{-mx}{p^\ell qQ} \right)U\left(\frac{m}{N}\right) \\
		&= \frac{1}{\tau{(\bar{\chi})}} \sum_{\beta \, \mathrm{mod} \, p^r} \bar{\chi}(-\beta) \sum_{m=1}^{\infty} \lambda_f(m)  e \left(\frac{-(a+bq)m}{p^\ell q} - \frac{\beta m }{p^r}\right) e\left(\frac{-mx}{p^\ell qQ} \right)U\left(\frac{m}{N}\right).
	\end{align}
Observe that, if $(\beta, \, p)>1$,  then $S_2=0$. Thus we can assume that $(\beta, \, p)=1$. Let's  consider  $c=p^{r-\ell}(a+bq)+q \beta$. Let 
 $(c, \, p^rq)=(c, \, p^{r+\ell^\prime})=p^{\ell_2}$. Note that,  if $\ell^\prime =0$, then $\ell_2=0$. On applying  Lemma \ref{gl2voronoi} to $S_2$ with the modulus $p^{r-\ell_2}q$ and  $g(m)=e\left(-mx/(p^\ell qQ)\right)$ $U(m/N)$, we arrive at
	
	\begin{align}\label{S_2 after gl2}
		S_{2}= &\frac{2\pi i^{k_f}}{\tau{(\bar{\chi})} p^{r-\ell_2} q} 	\sideset{}{^\star} \sum_{\beta \, {\rm mod} \, p^r} \bar{\chi}(-\beta) \sum_{m=1}^{\infty} \lambda_f(m) e\left(\frac{\overline{(c/p^{\ell_2})} m}{p^{r-\ell_2} q}\right)h(m) ,
	\end{align}
	where 
	\begin{align}
		h(m)= \int_{0}^{\infty} U\left(\frac{y}{N}\right) e\left(\frac{yx}{p^{\ell}qQ}\right) J_{k_f-1} \left(\frac{ 4 \pi \sqrt{my}}{p^{r-\ell_2} q}\right) \mathrm{d}y.
	\end{align}
	Upon changing the variable $y \mapsto Ny$ and extracting the oscillations of $J_{k_f-1}$ using Lemma \ref{bessel properties}, we observe  that $h(m)$ has essentially the following expression
	\begin{align}\label{h(m)}
		h(m)&=N\int_{0}^{\infty } U(y)Z^{\pm}\left(\frac{2\sqrt{Nmy}}{p^{r-\ell_2} q} \right)e\left(\frac{-Nxy}{p^{\ell}qQ}\right) e\left(\pm\frac{2\sqrt{Nmy}}{p^{r-\ell_2} q}\right) \mathrm{d}y \notag \\
		&=\frac{N^{3/4}p^{\frac{r-\ell_2}{2}}\sqrt{q}}{m^{1/4}} \int_{0}^{\infty } U(y)e\left(\frac{-Nxy}{p^{\ell}qQ}\right) e\left(\pm\frac{2\sqrt{Nmy}}{p^{r-\ell_2} q}\right) \mathrm{d}y \notag  \\
		&:=\frac{N^{3/4}p^{\frac{r-\ell_2}{2}}\sqrt{q}}{m^{1/4}}I_2(q,m,x). 
	\end{align}
	Note the abuse  of notation in here.  The weight function  $U$ appearing above is different from the one we started with.  But the new $U$  still  satisfies $U^{(j)}(x) \ll_{j, k_f} 1/x^j $  and support$(U) \subset [1/2,5/2]$.  On applying integration by parts, we observe that 
	\begin{equation*}
		I_{2}(q,m,x) \ll_{j} \left(1+\frac{N|x|}{p^{\ell}qQ}\right)^{j} \left(\frac{p^{r-\ell_2} q}{\sqrt{Nm}}\right)^j \ll \left(1+\frac{NR^{\epsilon}}{p^{\ell}qQ}\right)^{j} \left(\frac{p^{r-\ell_2} q}{\sqrt{Nm}}\right)^j  .
	\end{equation*}
	Thus the integral $I_2(...)$ is negligibly small if 
	$$m \gg R^{\epsilon}\max\left(\frac{(p^{r-\ell_2} q)^2 }{N},\ p^{2r-\ell-2\ell_2}\right)=R^{\epsilon}p^{2r-\ell-2\ell_2}:=M_{0}.$$
	Now  plugging the   expression  \eqref{h(m)} of $h(m)$ into \eqref{S_2 after gl2}, we get the lemma. 
	\end{proof}
	\begin{remark}\label{on gl2 eisenstein}
		In the above $m$-sum \eqref{m sum after expanding x}, if we had $d(m)$ instead of $\lambda_{f}(n)$, then on applying the Voronoi formula for $d(m)$, Lemma \ref{voronoi for dn}, we get a main term which would vanish, as it does not involve $\beta$ (see Lemma \ref{voronoi for dn}), and hence the sum over $\beta$ will vanish. The ramining part of the Voronoi summation formula for $d(m)$ is similar to the Voronoi formula for $\lambda_{f}(m)$.  Hence, following the similar arguments, we  also get subconvexity bounds for $L(1/2,\pi \times E\times \chi)$. 
	\end{remark}

\subsection{$S_k(N)$ after summation formulae} We conclude this section by combining Lemma \ref{gl3 } and Lemma \ref{gl2}. 
\begin{lemma} \label{S(N) after voronoi}
	Let $S_k(N)$ be as in \eqref{aftercircle}. Then we have 
		\begin{align}\label{SN before cauchy}
		S_k(N)=&\frac{2\pi i^{k_f}p^{\ell_1/2+\ell_2/2}N^{5/4}}{ \tau \left(\bar{\chi}\right)Q p^{(3\ell + {r})/{2}}k^{1/2}}  \mathop{\sum}_{\substack{1\leq q \leq Q }} \frac{1}{q^{2}} \sum_{\pm}\sum_{n_{1}|qp^{\ell-\ell_1}k} \notag   \\
		& \times  \sum_{n_{2} \ll N_0/n_1^2} \frac{A(n_{1},n_{2})}{n_{2}^{1/2}}  \sum_{ m \ll M_0} \frac{\lambda_{f}(m)}{m^{1/4}}\mathcal{C}(...) 	\mathcal{J}\left(n_1^2n_2,q,m\right),
	\end{align}
	where 
	\begin{align}\label{integral}
		\mathcal{J}\left(n_1^2 n_2,q,m\right):&=\int_{\mathbb{R}}W_1(x)g(q,x)I_{1}(n_{1}^2 n_{2},q,x)I_{2}\left(q,m,x\right)\mathrm{d}x, 
	\end{align}
and the character sum $	\mathcal{C} \left(...\right)$ is defined as 
	\begin{align}
 \sideset{}{^\star}\sum_{a \, {\mathrm{mod}}\, q} \sum_{b \, {\mathrm{mod}}\, p^{\ell}}  	\sideset{}{^\star}\sum_{\substack{\beta \, {\rm mod} \, p^{r} }} \bar{\chi}\left(-\beta\right) S\left(k(\overline{(a+bq)/p^{\ell_1}}),\pm n_{2};qp^{\ell-\ell_1}k/n_{1}\right)e\left(\frac{\overline{(c/p^{\ell_2})} m}{p^{r-\ell_2} q}\right).
\end{align}
	\end{lemma}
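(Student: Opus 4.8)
The plan is to obtain \eqref{SN before cauchy} by substituting the two dual expressions of Lemma \ref{gl3 } and Lemma \ref{gl2} into the post-delta formula \eqref{aftercircle} and then collecting terms. In \eqref{aftercircle} the inner sum over $n$ is precisely $S_1$ of \eqref{n sum }, and the inner sum over $m$ is precisely $S_2$ of \eqref{m sum after expanding x}. First I would replace $S_1$ by the right-hand side of Lemma \ref{gl3 }, which contributes the scalar factor $\sqrt{N}\,p^{\ell_1/2}/((qp^\ell)^{1/2}k^{1/2})$, the divisor sum over $n_1\mid qp^{\ell-\ell_1}k$, the truncated sum $n_2\ll N_0/n_1^2$ (with $N_0$ as in \eqref{N0}), the Kloosterman sum $S(k\overline{(a+bq)/p^{\ell_1}},\pm n_2;qp^{\ell-\ell_1}k/n_1)$ and the integral $I_1(n_1^2 n_2,q,x)$; similarly I would replace $S_2$ by the right-hand side of Lemma \ref{gl2}, contributing the factor $2\pi i^{k_f}N^{3/4}/(\tau(\bar\chi)\sqrt{qp^{r-\ell_2}})$, the sum over $\beta\bmod p^r$, the twisted character $\bar\chi(-\beta)$, the truncated sum over $m$ up to the bound $M_0$ of Lemma \ref{gl2}, the phase $e(\overline{(c/p^{\ell_2})}m/(p^{r-\ell_2}q))$ with $c=p^{r-\ell}(a+bq)+q\beta$, and the integral $I_2(q,m,x)$.

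After both substitutions the resulting multiple sum is absolutely convergent, since the $n_2$- and $m$-sums are now finite; hence one may freely interchange the $x$-integral with all the arithmetic sums over $q,a,b,\beta,n_1,n_2,m$. The next step is to collect the four scalar prefactors: the $1/(Qp^\ell)$ and the $1/q$ coming from Lemma \ref{deltasymbol} in \eqref{aftercircle}, the $\sqrt{N}\,p^{\ell_1/2}/((qp^\ell)^{1/2}k^{1/2})$ from $S_1$, and the $2\pi i^{k_f}N^{3/4}/(\tau(\bar\chi)\sqrt{qp^{r-\ell_2}})$ from $S_2$. Multiplying these, the powers of $N$ combine to $N^{1/2}N^{3/4}=N^{5/4}$, the powers of $p$ to $p^{\ell_1/2+\ell_2/2}/p^{(3\ell+r)/2}$, the powers of $q$ to $q^{-2}$, leaving also $Q^{-1}k^{-1/2}$ and the constant $2\pi i^{k_f}/\tau(\bar\chi)$; this is exactly the prefactor in \eqref{SN before cauchy}, with the $q^{-2}$ left inside the $q$-sum.

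It then remains only to name the two aggregated quantities. The triple sum over $a\bmod q$, $b\bmod p^\ell$, $\beta\bmod p^r$, taken together with the Kloosterman sum, the factor $\bar\chi(-\beta)$ and the phase $e(\overline{(c/p^{\ell_2})}m/(p^{r-\ell_2}q))$, is by definition the character sum $\mathcal{C}(\ldots)$ of the statement; and the remaining $x$-integral of $W_1(x)g(q,x)I_1(n_1^2 n_2,q,x)I_2(q,m,x)$ is by definition $\mathcal{J}(n_1^2 n_2,q,m)$ of \eqref{integral}. The error term $O(R^{-2020})$ from \eqref{aftercircle} is harmless and absorbed into the implied constant. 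The only point that needs care is purely bookkeeping: the divisor condition on $n_1$ and the modulus of the Kloosterman sum depend on $a,b$ through $\ell_1$, and the modulus of the phase depends on $a,b,q,\beta$ through $\ell_2$, so these exponents must be kept inside the appropriate sums and not pulled outside; there is no analytic obstacle here beyond careful tracking of the parameters.
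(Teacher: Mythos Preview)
Your proposal is correct and follows exactly the approach of the paper, which simply states that the proof follows by plugging Lemma~\ref{gl3 } and Lemma~\ref{gl2} into \eqref{aftercircle}. In fact, your verification of the scalar prefactor and your remark about the bookkeeping of $\ell_1,\ell_2$ provide more detail than the paper itself gives.
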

\begin{proof}
Proof follows by plugging   Lemma \ref{gl3 } and Lemma \ref{gl2} into \eqref{aftercircle}.
\end{proof}
		\section{The sum over $a$, $\beta$ and $b$} \label{sum over a and b}
	In this section, we will analyze  the character sum $\mathcal{C}(...)$ defined in Lemma \ref{S(N) after voronoi}. It  is given as 
	\begin{align}
		\sideset{}{^\star}\sum_{a \, {\mathrm{mod}}\, q} \sum_{b \, {\mathrm{mod}}\, p^{\ell}}    	\sideset{}{^\star}\sum_{\substack{\beta \, {\rm mod} \, p^{r} }} \bar{\chi}\left(-\beta\right) S\left(k(\overline{(a+bq)/p^{\ell_1}}),\pm n_{2};qp^{\ell-\ell_1}k/n_{1}\right)e\left(\frac{\overline{(c/p^{\ell_2})} m}{p^{r-\ell_2} q}\right).
	\end{align}
	Let $k=p^{\ell_3 } k^\prime$ and  $n_1=p^{\ell_4} n_1^\prime $  with $(k^\prime, \, p)=(n_1^\prime,\, p)=1$.  Thus $qp^{\ell-\ell_1}k/n_1=({q^\prime k^\prime}/{n_1^\prime})p^{\ell_5}$, where $\ell_5=\ell+\ell^\prime +\ell_3 -\ell_1-\ell_{4}$. On splitting the Kloosterman sum using reciprocity, we arrive at 
	\begin{align}\label{C before cauchy}
		\mathcal{C}(...)&=\sideset{}{^\star} \sum_{\alpha \, {\mathrm{mod}}\, p^{\ell_5} q^\prime k^\prime/n_1^\prime}e\left(\frac{\pm n_2\overline{\alpha}}{p^{\ell_5}qk^\prime /n_1^\prime}\right)  \sideset{}{^\star}\sum_{a \, {\mathrm{mod}}\, q}   e\left(\frac{\alpha k\overline{a }p^{\ell_1}\overline{p^{\ell_5}}}{ q^\prime k^\prime /n_1^\prime}\right) e\left(\frac{\overline{(p^{2r-2\ell_2-\ell+\ell^\prime}a)} m}{ q^\prime}\right) \notag \\
		&\times \sum_{b \, {\mathrm{mod}}\, p^{\ell}}   e\left(\frac{k\alpha \overline{((a+bq)/p^{\ell_1})} \overline{{(q^\prime k^\prime /n_1^\prime) }} \, }{p^{\ell_5}  }\right)	\sideset{}{^\star}\sum_{\substack{\beta \, {\rm mod} \, p^{r} }} \bar{\chi}\left(-\beta\right) e\left(\frac{\overline{(c/p^{\ell_2})} m\overline{q^\prime}}{p^{r-\ell_2+\ell^\prime} }\right).
	\end{align}
We have the following lemma. 
\begin{lemma}\label{characer for r even}
	Let $\mathcal{C}(...)$ be as in  \eqref{C before cauchy}.  Then, if $(q,p)=1$,  we have 
	\begin{align} \label{character sum  before cauchy }
		&\mathcal{C}(...)=p^{(r+\ell-\ell_1)/2}\chi(-q)\sum_{d \vert q} d \mu \left(\frac{q}{d}\right)  \sideset{}{^\star} \sum_{\substack{\alpha \; {\rm mod} \; p^{\ell_5}qk^\prime/n_1^\prime \\ h_1(\alpha,m)\equiv \; 0 \; {\rm mod} \; d}} e\left(\frac{\pm n_2\overline{\alpha} }{p^{\ell_5}q k^\prime/n_1^\prime}\right)  \notag  \\
		&\times \mathop{\sideset{}{^\star} \sum_{u \, {\rm mod} \, p^{(\ell-\ell_1)/2}} \ \sideset{}{^\star} \sum_{v\, {\rm mod} \, p^{r/2}}}_{\substack { h_2(v, u,m) \equiv \; 0 \; {\rm mod} \; p^{r/2}  \\ h_3(v,\, u, \, \alpha) \equiv \; 0 \; {\rm mod} \; p^{(\ell-\ell_1)/2}}} 
		\overline{\chi}\left(v-p^{r-\ell+\ell_1} u\right)e\left(\frac{\alpha \overline{u} \overline{q}n_1 }{p^{\ell-\ell_1}}  \right) e\left(\frac{\overline{v} m\overline{q}}{p^{r} }\right),
	\end{align}
where $h_1(\alpha,m)$, $h_2(v,u,m)$ and $h_3(v,u,\alpha)$ are defined as in \eqref{h_1}, \eqref{h_2} and \eqref{h_3} respectively.  
More specifically, the above sum vanishes  if $(n_1,\,p)>1$. In the other case, i.e., for $(q,p)>1$, we have 
	\begin{align} \label{char for non primes 1 }
	&\mathcal{C}(...)=p^{(r+\ell+\ell^\prime)/2}\chi(-q^\prime)\sum_{d \vert q^\prime} d \mu \left(\frac{q^\prime}{d}\right)  \sideset{}{^\star} \sum_{\substack{\alpha \; {\rm mod} \; p^{\ell_5}q^\prime k^\prime/n_1^\prime \\ h_1^\prime(\alpha,m)\equiv \; 0 \; {\rm mod} \; d}} e\left(\frac{\pm n_2\overline{\alpha} }{p^{\ell_5}q^\prime k^\prime/n_1^\prime}\right)  \notag  \\
	&\times \mathop{\sideset{}{^\star} \sum_{u \, {\rm mod} \, p^{(\ell+\ell^\prime)/2}} \ \sideset{}{^\star} \sum_{v\, {\rm mod} \, p^{r/2}}}_{\substack { h_2^\prime(v, u,m) \equiv \; 0 \; {\rm mod} \; p^{r/2}  \\ h_3^\prime(v,\, u, \, \alpha) \equiv \; 0 \; {\rm mod} \; p^{(\ell+\ell^\prime)/2}}} 
	\overline{\chi}\left(v-p^{r-\ell-\ell^\prime} u\right)e\left(\frac{\alpha \overline{u} \overline{q^\prime}n_1 }{p^{\ell+\ell^\prime}}  \right) e\left(\frac{\overline{v} m\overline{q^\prime}}{p^{r} }\right),
\end{align}

\end{lemma}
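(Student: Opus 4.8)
## Proof Proposal for Lemma \ref{characer for r even}

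The plan is to evaluate the character sum $\mathcal{C}(...)$ in \eqref{C before cauchy} by disentangling its arithmetic into coprime moduli and then resolving each piece. First I would apply the Chinese Remainder Theorem to separate the $p$-part from the $q^\prime k^\prime/n_1^\prime$-part of the Kloosterman modulus $p^{\ell_5} q^\prime k^\prime / n_1^\prime$. In the generic case $(q,p)=1$, one has $q=q^\prime$, $\ell^\prime = 0$, so that the relevant exponent is $\ell_5 = \ell + \ell_3 - \ell_1 - \ell_4$. The $\beta$-sum involves $\overline{\chi}(-\beta)$ against an additive character in $\beta$ modulo $p^{r}$; since $c = p^{r-\ell}(a+bq) + q\beta$, the condition $(c,p^rq)=p^{\ell_2}$ controls the $p$-valuation. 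I would parametrize $\beta$ so as to make $c/p^{\ell_2}$ explicit, and then recognize the $\beta$-sum as (essentially) a Gauss sum for $\chi$ twisted by the data $(a,b,m)$. This is what produces the factor $\tau(\bar\chi)$ — whose modulus is $p^{r/2}$ — cancelling against the $1/\tau(\bar\chi)$ in Lemma \ref{S(N) after voronoi} and leaving the clean power $p^{(r+\ell-\ell_1)/2}$ in front.

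Next I would handle the $b$-sum modulo $p^{\ell}$ and the $a$-sum modulo $q$. The $a$-sum modulo $q^\prime$ (coprime to $p$) produces, via opening the Kloosterman sum by reciprocity as already done in \eqref{C before cauchy}, a Ramanujan-type sum: summing $e(\overline{(p^{\cdots}a)}m/q^\prime)$ against $e(\alpha k \overline{a}p^{\ell_1}\overline{p^{\ell_5}}/q^\prime)$ over $a$ modulo $q^\prime$ gives a congruence condition tying $\alpha$, $m$, and the divisors of $q^\prime$; this is the origin of the sum $\sum_{d\mid q} d\,\mu(q/d)$ with the side condition $h_1(\alpha,m)\equiv 0 \bmod d$. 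The $b$-sum modulo $p^{\ell}$, combined with the leftover $p$-part of the $\alpha$-sum and the Gauss-sum evaluation of the $\beta$-sum, is where the two auxiliary variables $u \bmod p^{(\ell-\ell_1)/2}$ and $v \bmod p^{r/2}$ enter: after completing squares / linearizing the phases modulo prime powers (the standard trick of writing a character sum modulo $p^{2j}$ as a sum over $p^j$ residues with a stationary-point congruence), one lands exactly on the congruences $h_2(v,u,m)\equiv 0\bmod p^{r/2}$ and $h_3(v,u,\alpha)\equiv 0 \bmod p^{(\ell-\ell_1)/2}$, with the surviving character $\overline{\chi}(v - p^{r-\ell+\ell_1}u)$ and the two residual additive characters. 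The vanishing when $(n_1,p)>1$ should fall out because then the $\alpha$-phase $e(\pm n_2\bar\alpha/p^{\ell_5}\cdots)$ forces an incompatible constraint with $e(\alpha \bar u\bar q n_1/p^{\ell-\ell_1})$ once $p\mid n_1$.

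For the non-generic case $(q,p)>1$, write $q = q^\prime p^{\ell^\prime}$ with $\ell^\prime \geq 1$; then $(a+bq,p^\ell q)=(a+bq,p^{\ell+\ell^\prime})=1$ automatically (so $\ell_1 = 0$ in effect but the modulus gains $p^{\ell^\prime}$), and the same computation goes through with $\ell$ replaced by $\ell+\ell^\prime$ throughout and $q$ replaced by $q^\prime$ in the coprime-to-$p$ parts. This gives \eqref{char for non primes 1}; the only bookkeeping difference is which powers of $p$ get absorbed into the modulus of the $\alpha$-sum versus into the $u,v$-ranges.

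The main obstacle I anticipate is the precise evaluation of the nested $p$-power exponential sums — keeping accurate track of all the valuations $\ell_1,\dots,\ell_5$ and ensuring the quadratic/linear substitutions modulo $p^{r}$ and $p^{\ell}$ are valid (which requires $p$ odd, or a separate treatment of $p=2$, and care about whether $r, \ell-\ell_1$ are even). In particular the step of splitting a complete sum modulo $p^{r}$ into a "square-root" sum modulo $p^{r/2}$ with a stationary congruence needs $r$ even — or, when $r$ is odd, a slightly different normalization with an extra clean Gauss-sum factor — which is presumably why the lemma is phrased for the even case and the odd case is deferred. Everything else is CRT, reciprocity for Kloosterman sums, and the standard Gauss-sum identity $\sum_\beta \bar\chi(\beta) e(\beta m/p^r) = \bar\chi(m)\tau(\bar\chi)$ for $(m,p)=1$; the difficulty is entirely in the tedium of the $p$-adic bookkeeping rather than in any single hard idea.
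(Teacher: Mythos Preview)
Your overall strategy matches the paper's: the $a$-sum is evaluated as a Ramanujan sum giving $\sum_{d\mid q} d\,\mu(q/d)$ with the congruence $h_1$, and the $p$-power sums are reduced by the $p$-adic ``stationary phase'' trick (splitting a sum modulo $p^{2j}$ into residues modulo $p^j$ and using that $\chi(1+\,\cdot\,p^{j})$ is an additive character). Two points, however, need correction.

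First, the $\beta$-sum is \emph{not} evaluated as a Gauss sum and does not produce $\tau(\bar\chi)$. In the paper one first changes variable $v = p^{r-\ell+\ell_1}u + q\beta$ (this is what extracts the factor $\chi(-q)$), and then the resulting sum over $v \bmod p^r$ is handled by the \emph{same} splitting trick you describe for the $b$-sum: writing $v = v_1 + v_2 p^{r/2}$ and exploiting the additive-character property of $\overline{\chi}(1+\,\cdot\,p^{r/2})$ produces the factor $p^{r/2}$ together with the congruence $h_2$. The $1/\tau(\bar\chi)$ in Lemma~\ref{S(N) after voronoi} is a separate normalization from the $GL(2)$ Voronoi step and is not cancelled here. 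Thus both $u$ and $v$ arise from applying the half-length trick to two \emph{different} sums (the $b$-sum gives $u$ and $p^{(\ell-\ell_1)/2}$; the $\beta$-sum gives $v$ and $p^{r/2}$), rather than $v$ coming from a Gauss-sum identity.

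Second, your treatment of $(q,p)>1$ is incomplete. When $0<\ell^\prime < r-\ell$ your recipe of replacing $\ell-\ell_1$ by $\ell+\ell^\prime$ and $q$ by $q^\prime$ is exactly what the paper does. But when $\ell^\prime > r-\ell$ (so $\ell_2 = r-\ell$ rather than $\ell^\prime$), the formula \eqref{char for non primes 1} makes no sense ($p^{r-\ell-\ell^\prime}$ has negative exponent), and a different argument is needed: after extending the $\beta$-sum to a sum modulo $p^{\ell+\ell^\prime}$ and changing variables, it becomes a Gauss sum for the \emph{imprimitive} character $\chi_1$ induced from $\chi$ modulo $p^{\ell+\ell^\prime}$, and $\tau(\chi_1)=0$ forces $\mathcal{C}(\ldots)=0$. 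This vanishing subcase cannot be obtained by the ``same computation with $\ell\to\ell+\ell^\prime$''.
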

\begin{proof}

We will analyze $\mathcal{C}(...)$ in  two cases. 
	\subparagraph*{Case 1} $(q, \, p)=1$, i.e., $\ell^\prime=0$ and $q=q^\prime$. \\
		As $(q,\, p)=1$, we note that $(c,\, p^rq)=(p^{r-\ell}(a+bq)+\beta q, \, p^{r}q)=1$, and hence $\ell_2=0$.  Thus, by changing the variable $b \mapsto (a+bq)=u$ in \eqref{C before cauchy}, we get the following expression for $\mathcal{C}(...)$:
		\begin{align}\label{C for coprime q}
		\mathcal{C}(...)&=\sideset{}{^\star} \sum_{\alpha \, {\mathrm{mod}}\, p^{\ell_5} q k^\prime/n_1^\prime}e\left(\frac{\pm n_2\overline{\alpha}}{p^{\ell_5}qk^\prime /n_1^\prime}\right)  \sideset{}{^\star}\sum_{a \, {\mathrm{mod}}\, q}   e\left(\frac{\alpha k\overline{a }p^{\ell_1}\overline{p^{\ell_5}}}{ q k^\prime /n_1^\prime}\right)e\left(\frac{\overline{(p^{2r-\ell}a)} m}{ q}\right) \notag \\
		&\times \sideset{}{^\star} \sum_{u \, {\mathrm{mod}}\, p^{\ell-\ell_1}}   e\left(\frac{\alpha \overline{u} \overline{q}n_1 }{p^{\ell-\ell_1}}  \right)	\sideset{}{^\star}\sum_{\substack{\beta \, {\rm mod} \, p^{r} }} \bar{\chi}\left(-\beta\right)e\left(\frac{\overline{(p^{r-\ell+\ell_1}u+q \beta)} m \overline{q}}{p^{r} }\right).
	\end{align}
Let's consider the sum over $\beta$ in the above expression. It is given as 
	$$\mathcal{C}_{\beta}(...):=\sideset{}{^\star}\sum_{\substack{\beta \, {\rm mod} \, p^{r} }} \bar{\chi}\left(-\beta\right)e\left(\frac{\overline{(p^{r-\ell+\ell_1}u+q \beta)} m \overline{q}}{p^{r} }\right).$$
On  changing the variable  $p^{r-\ell+\ell_1}u+q \beta=v$, 
	we arrive at
	\begin{align*}
		\mathcal{C}_{\beta}(...)= \chi(-q)	\sideset{}{^\star}\sum_{\substack{v \, {\rm mod} \, p^{r} }} \overline{\chi}\left(v-p^{r-\ell+\ell_1}u\right) e\left(\frac{\overline{v} m\overline{q}}{p^{r} }\right).
	\end{align*}
	Let us first  assume that $r$ is even. Splitting the sum over $v$ into the  residue classes modulo $p^{r/2}$, i.e., writing $v=v_1+v_2p^{r/2}$, we get the following expression for $	\mathcal{C}_{\beta}(...)$:
	\begin{align*}
		 &\chi(-q)\sideset{}{^\star}\sum_{\substack{v_1 \, {\rm mod} \, p^{r/2} }}   \overline{\chi}\left(v_1-p^{r-\ell+\ell_1}u\right)	e\left(\frac{\overline{v_1} m\overline{q}}{p^{r} }\right) \\
		& \times \sum_{\substack{v_2 \, {\rm mod} \, p^{r/2} }}  \overline{\chi}\left(1+\overline{(v_1-p^{r-\ell+\ell_1}u)}v_2p^{r/2}\right) e\left(-\frac{\overline{v_1}^2v_2 m\overline{q}}{p^{r/2} }\right).
	\end{align*}
	We observe that ${\chi}(1+v_2p^{r/2})$ is an additive character modulo $p^{r/2}$ of order $p^{r/2}$. More precisely, we have 
	$${\chi}(1+v_2p^{r/2})=e\left(\frac{Av_2}{p^{r/2}}\right),$$
	for some constant $A$ such that $(A,p)=1$ which  depends only on the character $\chi$. Thus, on  evaluating the sum over $v_2$,  we get the following expression for $	\mathcal{C}_{\beta}(...)$:
	\begin{align}\label{Cb}
		\mathcal{C}_{\beta}(...)=p^{r/2} \chi(-q)\sideset{}{^\star} \sum_{\substack{v \, {\rm mod} \, p^{r/2} \\ h_2(v,u,m) \equiv \; 0 \; {\rm mod} \; p^{r/2} }} \overline{\chi}\left(v-p^{r-\ell+\ell_1}u\right) e\left(\frac{\overline{v} m\overline{q}}{p^{r} }\right),
	\end{align}
	where 
	\begin{align}\label{h_2}
		h_2(v,u,m)=Aq v^2 +mv-m p^{r-\ell+\ell_1}u.
	\end{align} 
 For  $r$  odd,  a similar analysis can be done. In fact, we get a similar sum as above. We refer to  Chapter 12 of \cite{IK}  for more details. Thus,  for simplicity, we will continue the proof  for $r$ even. 
	Next  we analyze  the  sum over $a$  in \eqref{C before cauchy}.  
	It is evalualed as 
	\begin{align}\label{sum over a}
		\sideset{}{^\star}\sum_{a \, {\mathrm{mod}}\, q}   e\left(\frac{\alpha k\overline{a }p^{\ell_1}\overline{p^{\ell_5}}}{ q k^\prime /n_1^\prime}\right)e\left(\frac{\overline{(p^{2r-\ell}a)} m}{ q}\right) 	= \sum_{d \vert(q, \, h_1(\alpha, m)) } d \mu \left(\frac{q}{d}\right),
	\end{align}
where 
	\begin{align}\label{h_1}
		h_1(\alpha,m)=n_1^\prime \alpha p^{2\ell_1+\ell_4}\overline{p^{2\ell}}  + m \overline{p^{2r}}.
	\end{align}
	Thus, on plugging \eqref{Cb} and \eqref{sum over a} into \eqref{C for coprime q}, we get the following expression for $\mathcal{C}(...)$:
		\begin{align}\label{character sum (l prime =0) after a sum }
		\mathcal{C}(...)&=p^{r/2}\chi(-q)\sum_{d \vert q} d \mu \left(\frac{q}{d}\right)  \sideset{}{^\star} \sum_{\substack{\alpha \; {\rm mod} \; p^{\ell_5}qk^\prime/n_1^\prime \\ h_1(\alpha,m)\equiv \; 0 \; {\rm mod} \; d}} e\left(\frac{\pm n_2\overline{\alpha} }{p^{\ell_5}q k^\prime/n_1^\prime}\right)   \notag\\
		&\times  \mathop{\sideset{}{^\star} \sum_{u \, {\rm mod} \, p^{(\ell-\ell_1)}} \ \sideset{}{^\star} \sum_{v\, {\rm mod} \, p^{r/2}}}_{\substack{ {h_2(v, u,m) \equiv \; 0 \; {\rm mod} \; p^{r/2} }}} \overline{\chi}\left(v-p^{r-\ell+\ell_1} u\right) e\left(\frac{\overline{v} m\overline{q}}{p^{r} }\right)e\left(\frac{\alpha \overline{u} \overline{q}n_1 }{p^{\ell-\ell_1}}  \right) .
	\end{align}
	Next we analyze the sum over $u$. It is given as
\begin{align}\label{sum over u and v}
	\mathcal{C}_u(...):&=\sideset{}{^\star} \sum_{\substack{u \, {\rm mod} \, p^{\ell-\ell_1} \\ h_2(v, u,m) \equiv \; 0 \; {\rm mod} \; p^{r/2} }} 
	\overline{\chi}\left(v-p^{r-\ell+\ell_1} u\right) e\left(\frac{\alpha \overline{u} \overline{q}n_1 }{p^{\ell-\ell_1}}  \right) 
\end{align}
Let's  assume (for simplicity) that $\ell-\ell_1$ is even. On reducing  $u$ modulo $(\ell-\ell_1)/2$, i.e., writing $u$ as
$$u=u_{1}+u_{2} p^{(\ell-\ell_1)/2} \iff \bar{u} = \bar{u_{1}} - \bar{u_{1}} ^{2} u_{2} p^{(\ell-\ell_1)/2},$$
we get the following expression for $ \mathcal{C}_{u}(...)$:
\begin{align*}
	&\sideset{}{^\star} \sum_{\substack{u_1 \, {\rm mod} \, p^{(\ell-\ell_1)/2} \\ h_2(v,\, u_1,m) \equiv \; 0 \; {\rm mod} \; p^{r/2} }} 
	\overline{\chi}\left(v-p^{r-\ell+\ell_1} u_1\right) e\left(\frac{\alpha \overline{u_1} \overline{q}n_1 }{p^{\ell-\ell_1}}  \right) \notag  \\  
	&\times \sum_{u_{2} \; {\rm mod} \; p^{(\ell-\ell_1)/2}} \overline{\chi} \left(1-\overline{\left(v-p^{r- \ell+\ell_1} u_{1}\right)} p^{r - (\ell- \ell_1)/2} u_{2}\right)e\left(\frac{-\overline{u_1}^2u_2\alpha \overline{q }n_1}{p^{(\ell-\ell_1)/2}}\right).
\end{align*}
We observe that  $u_{2} \mapsto \bar{\chi}\left(1-p^{r-(\ell-\ell_1)/2} u_{2}\right)$
is an additive character modulo $p^{(\ell-\ell_1) /2}$ and of order $p^{(\ell-\ell_1) /2}$.  Thus the sum over $u_2$ can be written as
\begin{align*}
	\sum_{u_{2} \; {\rm mod} \; p^{(\ell-\ell_1)/2}}e\left(\frac{B \overline{\left(v-p^{r- \ell+\ell_1} u_{1}\right)} \; u_{2}}{p^{(\ell-\ell_1) /2}}\right) e\left(\frac{-\overline{u_1}^2u_2\alpha \overline{q }n_1}{p^{(\ell-\ell_1)/2}}\right),
\end{align*}
where $B$ is an absolute constant such that $(B,p)=1$ which depends on $\chi$ only.  Evaluating  the above sum,  we get  the following congruence relation
\begin{align}\label{h_3}
	h_3(v,u_1,\alpha):={q } B \overline{\left(v-p^{r- \ell+\ell_1} u_{1}\right)} - \bar{u_{1}}^{2} n_1\alpha  \equiv \; 0 \; {\rm mod} \; p^{(\ell-\ell_1)/2}, 
\end{align}
along with  the factor $p^{(\ell-\ell_1)/2}$. We observe that if $\ell_4>0$, where  $n_1=p^{\ell_4} n_1^\prime$, then the above equation has no solutions. Hence $\mathcal{C}_u(...)=0$. In the other case, i.e., for   $(n_1,\, p)=1$, or $\ell_4=0$ and $n_1^\prime= n_1$,  we get
\begin{align} \label{sum over u}
	\mathcal{C}_u(...)=p^{(\ell-\ell_1)/2}\sideset{}{^\star} \sum_{\substack{u \, {\rm mod} \, p^{(\ell-\ell_1)/2} \\ h_2(v,u,m) \equiv \; 0 \; {\rm mod} \; p^{r/2}  \\ h_3(v,\, u, \, \alpha) \equiv \; 0 \; {\rm mod} \; p^{(\ell-\ell_1)/2}}} 
	\overline{\chi}\left(v-p^{r-\ell+\ell_1} u\right)e\left(\frac{-\overline{u_1}^2u_2\alpha \overline{q }n_1}{p^{(\ell-\ell_1)/2}}\right).
\end{align}
On plugging the above expression into \eqref{character sum (l prime =0) after a sum }, we get the first part of the lemma. 
\subparagraph*{Case 2} $q=q^\prime p^{\ell^\prime}$, with $\ell^\prime>0$. \\
In this case, we first observe that $(a+bq,\, p)=1$, as $(a,q^\prime p^{\ell^\prime})=1$ and hence $\ell_1=0$. Thus,  on  splitting the sum over  $a$ in \eqref{C before cauchy} as 
	$$a =  a_1 p^{\ell^\prime}\overline{p^{\ell^\prime}}+a_2q^\prime\overline{q^\prime} , \quad \   \; a_1 \; {\rm mod} \; q^\prime \quad \text{and} \quad  a_2 \; {\rm mod} \; p^{\ell^\prime},$$
	we arrive at
		\begin{align}
		\mathcal{C}(...)&=\sideset{}{^\star} \sum_{\alpha \, {\mathrm{mod}}\, p^{\ell_5} q^\prime k^\prime/n_1^\prime}e\left(\frac{\pm n_2\overline{\alpha}}{p^{\ell_5}q^\prime k^\prime /n_1^\prime}\right)  \sideset{}{^\star}\sum_{a_1 \, {\mathrm{mod}}\, q^\prime}   e\left(\frac{\alpha n_1 \overline{a_1 }\overline{p^{\ell+\ell^\prime}}}{ q^\prime }\right) e\left(\frac{\overline{(p^{2r-2\ell_2-\ell+\ell^\prime}a_1)} m}{ q^\prime}\right) \notag \\
		&\times\sideset{}{^\star}\sum_{a_2 \, {\mathrm{mod}}\, p^{\ell^\prime}}   \sum_{b \, {\mathrm{mod}}\, p^{\ell}}   e\left(\frac{\alpha n_1 \overline{((a+bq))} \overline{{q^\prime  }} \, }{p^{\ell+\ell^\prime}  }\right)	\sideset{}{^\star}\sum_{\substack{\beta \, {\rm mod} \, p^{r} }} \bar{\chi}\left(-\beta\right) e\left(\frac{\overline{(c/p^{\ell_2})} m\overline{q^\prime}}{p^{r-\ell_2+\ell^\prime} }\right).
	\end{align}
	 On combining the sum $a_2$ and  the sum over $b$,  we get the following expression for $\mathcal{C}(...)$:
	\begin{align}
		&\sideset{}{^\star} \sum_{\alpha \, {\mathrm{mod}}\, p^{\ell_5} q^\prime k^\prime/n_1^\prime}e\left(\frac{\pm n_2\overline{\alpha}}{p^{\ell_5}q^\prime k^\prime /n_1^\prime}\right)  \sideset{}{^\star}\sum_{a_1 \, {\mathrm{mod}}\, q^\prime}   e\left(\frac{\overline{a_1 }\overline{p^{\ell^\prime}}(\alpha n_1\overline{p^{2\ell}}+\overline{p^{2r-2\ell_2}}m)}{ q^\prime }\right) \notag \\
		&\times  \sideset{}{^\star}\sum_{b \, {\mathrm{mod}}\, p^{\ell+\ell^\prime}}   e\left(\frac{\alpha n_1\overline{((a_1p^{\ell^\prime}\overline{p^{\ell^\prime}}+bq^\prime)) }\overline{q^\prime} }{p^{\ell+\ell^\prime}}  \right) \notag \\
		 &\times \sideset{}{^\star}\sum_{\substack{\beta \, {\rm mod} \, p^{r} }} \bar{\chi}\left(-\beta\right) e\left(\frac{\overline{((p^{r-\ell}(a_1p^{\ell^\prime}\overline{p^{\ell^\prime}}+bq^\prime)+q\beta)/p^{\ell_2})} m\overline{q^\prime}}{p^{r-\ell_2+\ell^\prime} }\right).
	\end{align}
	Now changing the variable $b \mapsto a_1p^{\ell^\prime}\overline{p^{\ell^\prime}}+bq^\prime = u $, we arrive at 
	\begin{align}\label{character for nonprime}
		&\sideset{}{^\star} \sum_{\alpha \, {\mathrm{mod}}\, p^{\ell_5} q^\prime k^\prime/n_1^\prime}e\left(\frac{\pm n_2\overline{\alpha}}{p^{\ell_5}q^\prime k^\prime /n_1^\prime}\right)  \sideset{}{^\star}\sum_{a_1 \, {\mathrm{mod}}\, q^\prime}   e\left(\frac{\overline{a_1 }\overline{p^{\ell^\prime}}(\alpha n_1\overline{p^{2\ell}}+\overline{p^{2r-2\ell_2}}m)}{ q^\prime }\right) \notag \\
		&\times  \sideset{}{^\star}\sum_{u \, {\mathrm{mod}}\, p^{\ell+\ell^\prime}}   e\left(\frac{\alpha n_1\overline{u}\overline{q^\prime} }{p^{\ell+\ell^\prime}}  \right)\
		 \sideset{}{^\star}\sum_{\substack{\beta \, {\rm mod} \, p^{r} }} \bar{\chi}\left(-\beta\right) e\left(\frac{\overline{((p^{r-\ell}u+q\beta)/p^{\ell_2})} m\overline{q^\prime}}{p^{r-\ell_2+\ell^\prime} }\right).
	\end{align}
\subparagraph*{Subcase 2.1} $0< \ell^\prime< r-\ell$. \\
In this situation, we observe that $\ell_2=\ell^\prime$. Thus the sum over $\beta$ becomes
\begin{align*}
	\mathcal{C}_{\beta}(\ell^\prime>0)&:= \sideset{}{^\star}\sum_{\substack{\beta \, {\rm mod} \, p^{r} }} \bar{\chi}\left(-\beta\right) e\left(\frac{\overline{((p^{r-\ell}u+q\beta)/p^{\ell_2})} m\overline{q^\prime}}{p^{r-\ell_2+\ell^\prime} }\right) \\
	&= \sideset{}{^\star}\sum_{\substack{\beta \, {\rm mod} \, p^{r} }} \bar{\chi}\left(-\beta\right) e\left(\frac{\overline{((p^{r-\ell-\ell^\prime}u+q^\prime\beta))} m\overline{q^\prime}}{p^{r} }\right).
\end{align*}
This sum is similar to the sum over $\beta$  in Case 1. Thus analyzing  it in a similar way, we arrive at 
\begin{align}
	\mathcal{C}_{\beta}(\ell^\prime>0)=p^{r/2} \chi(-q^\prime)\sideset{}{^\star} \sum_{\substack{v \, {\rm mod} \, p^{r/2} \\ h_2^\prime(v,u,m) \equiv \; 0 \; {\rm mod} \; p^{r/2} }} \overline{\chi}\left(v-p^{r-\ell-\ell^\prime}u\right) e\left(\frac{\overline{v} m\overline{q^\prime}}{p^{r} }\right),
\end{align}
where $$h_2^\prime(v,u,m)=Aq^\prime v^2 +mv-m p^{r-\ell-\ell^\prime}u.$$ 
On analyzing the sum over $a_1$ and the sum over $u$ as  in Case 1, we get the following expression for $\mathcal{C}(...)$:
	\begin{align} \label{char for non primes }
	&\mathcal{C}(...)=p^{(r+\ell+\ell^\prime)/2}\chi(-q^\prime)\sum_{d \vert q^\prime} d \mu \left(\frac{q^\prime}{d}\right)  \sideset{}{^\star} \sum_{\substack{\alpha \; {\rm mod} \; p^{\ell_5}q^\prime k^\prime/n_1^\prime \\ h_1(\alpha,m)\equiv \; 0 \; {\rm mod} \; d}} e\left(\frac{\pm n_2\overline{\alpha} }{p^{\ell_5}q^\prime k^\prime/n_1^\prime}\right)  \notag  \\
	&\times \mathop{\sideset{}{^\star} \sum_{u \, {\rm mod} \, p^{(\ell+\ell^\prime)/2}} \ \sideset{}{^\star} \sum_{v\, {\rm mod} \, p^{r/2}}}_{\substack { h_2(v, u,m) \equiv \; 0 \; {\rm mod} \; p^{r/2}  \\ h_3(v,\, u, \, \alpha) \equiv \; 0 \; {\rm mod} \; p^{(\ell+\ell^\prime)/2}}} 
	\overline{\chi}\left(v-p^{r-\ell-\ell^\prime} u\right)e\left(\frac{\alpha \overline{u} \overline{q^\prime}n_1 }{p^{\ell+\ell^\prime}}  \right) e\left(\frac{\overline{v} m\overline{q^\prime}}{p^{r} }\right),
\end{align}
where $h_1(\alpha,m)=n_1 \alpha \overline{p^{2\ell+2\ell^\prime}}  + m \overline{p^{2r}}$ and 
$$h_3(v,u,\alpha):={q^\prime } B \overline{\left(v-p^{r- \ell-\ell^\prime} u\right)} - \bar{u}^{2} n_1\alpha  \equiv \; 0 \; {\rm mod} \; p^{(\ell+\ell^\prime)/2}.$$ 
\subparagraph*{Subcase 2.2}$r-\ell <\ell^\prime$.\\
In the case, we have $\ell_2=r-\ell$.
Thus the sum over $\beta$ in \eqref{character for nonprime} is given by 
\begin{align}
	\mathcal{C}_{\beta}(\ell^\prime>r-\ell)=	\sideset{}{^\star}\sum_{\substack{\beta \, {\rm mod} \, p^{r} }} \bar{\chi}\left(-\beta\right) e\left(\frac{\overline{(u+q^\prime\beta p^{\ell^\prime-r+\ell})} m\overline{q^\prime}}{p^{\ell+\ell^\prime} }\right).
\end{align}
We first assume that $(m,p)=1$. On extending this as a sum modulo $p^{\ell+\ell^\prime}$, we see that 
\begin{align}
	\mathcal{C}_{\beta}(\ell^\prime>r-\ell)=	\frac{1}{p^{\ell+\ell_1-r}}\sideset{}{^\star}\sum_{\substack{v \, {\rm mod} \, p^{\ell+\ell^\prime} }} \bar{\chi_1}\left(-v\right) e\left(\frac{\overline{(u+q^\prime v p^{\ell^\prime-r+\ell})} m\overline{q^\prime}}{p^{\ell+\ell^\prime} }\right), 
\end{align}
where $\chi_1$ is the character modulo $p^{\ell+\ell^\prime}$ which is induced from $\chi$. The above equality follows upon writing $v=\beta+v_1p^{r}$ and realizing  the sum over $v_1$ as a free sum.  Now,  upon changing the variable $v \mapsto vu$, we arrive at
\begin{align}
	\mathcal{C}_{\beta}(\ell^\prime>r-\ell)=	\frac{\bar{\chi_1}\left(-u\right)}{p^{\ell+\ell_1-r}} \sideset{}{^\star}\sum_{\substack{v \, {\rm mod} \, p^{\ell+\ell^\prime} }} \bar{\chi_1}\left(v\right) e\left(\frac{\overline{v}\theta }{p^{\ell+\ell^\prime} }\right), 
\end{align}
where $\theta=\overline{(u+q^\prime v p^{\ell^\prime-r+\ell})}m\overline{q^\prime}$. Note that the sum over $v$ can be written as 
$$\sideset{}{^\star}\sum_{\substack{v \, {\rm mod} \, p^{\ell+\ell^\prime} }} \bar{\chi_1}\left(v\right) e\left(\frac{\overline{v}\theta }{p^{\ell+\ell^\prime} }\right)=\sum_{\substack{v \, {\rm mod} \, p^{\ell+\ell^\prime} }} {\chi_1}\left(v\right) e\left(\frac{{v}\theta }{p^{\ell+\ell^\prime} }\right)=\bar{\chi_1}(\theta)\tau(\chi),$$
as $(\theta,p)=1$. We observe that $\tau(\chi_1)=0$, as $\chi_1$ is an imprimitive character modulo $p^{\ell+\ell^\prime}$. See Davenport, [\cite{Davenport}, Chapter 9]. Hence, the character sum vanishes. When $(m,p)>1$, we can carry out  the same analysis by extracting powers of $p$ from $m$, yielding the same result. 
\end{proof}
\begin{remark}\label{coprime q}
	We note  that the expressions of $\mathcal{C}(...)$ for $(q,p)=1$ and $(q,p)>1$ in the above lemma are structurally similar.  So their further analysis will be along the simillar lines. For simplicity,  we will continue with the   expression \eqref{character sum  before cauchy } of $\mathcal{C}(..)$. 
\end{remark}
	\section{Applying  Cauchy and Poisson}\label{cauchy and poisson}
In this section,  we  apply the Cauchy's inequality  followed by the  Poisson summation formula to the sum over $n_2$  in   \eqref{SN before cauchy}.  The aim to apply Cauchy is to get rid of $GL(3)$ Fourier coefficients $A(n_1,n_2)$.  
\subsection{Cauchy's inequality}  
Splitting the sum over $q$ in dyadic blocks $q \sim C$, with $C \ll Q$,  and writing  $q=p^{\ell^\prime}q^\prime=p^{\ell^\prime}q_1^\prime q_2^\prime$ with $(q^\prime, p)=1$, $q_1^\prime \vert (n_1^\prime k^\prime)^\infty$ and  $(n_1^\prime k^\prime, \, q_2^\prime)=1$, we see that $S_k(N)$ in \eqref{SN before cauchy} is dominated by
\begin{align*}
	\sup_{C \ll Q}\frac{p^{\ell_1/2+\ell_2/2}N^{5/4}}{ Q p^{3\ell/2+r} k^{1/2}C^{2}}&  \sum_{\pm} \sum_{\frac{n_1^\prime}{(n_1^\prime ,\, k^\prime)} \ll C p^{\ell^\prime}}   \sum_{\frac{n_1^\prime}{(n_1^\prime, \, k^{\prime})}\vert q_1^\prime \vert (n_1^\prime k^\prime)^\infty} \sum_{n_{2} \ll N_0/n_1^2} \frac{|A(n_{1},n_{2})|}{n_{2}^{1/2}}  \notag   \\
	& \times \bigg | \sum_{q_2^\prime \sim C/(p^{\ell^\prime} q_1^\prime)} \sum_{ m \ll M_0} \frac{\lambda_{f}(m)}{m^{1/4}}\mathcal{C}(...) 	\mathcal{J}\left(n_1^2n_2,p^{\ell^\prime}q^\prime,m\right)\bigg |.
\end{align*}
On splitting the sum over $m$ into dyadic blocks $m \sim M_1$, $M_1 \ll M_0$, and applying the Cauchy's inequality to the sum over $n_2$, we arrive at 
\begin{align}\label{SN after cauchy}
	S_k(N) \ll  \mathop{\sup}_{\substack{C \ll Q \\ M_1 \ll M_0}} 	\frac{p^{\ell_1/2+\ell_2/2}N^{5/4}}{ Q p^{3\ell/2+r} k^{1/2}C^{2}}  \sum_{\pm} \sum_{\frac{n_1^\prime}{(n_1^\prime ,\, k^\prime)} \ll C p^{\ell^\prime}} \Theta^{1/2} \sum_{\frac{n_1^\prime}{(n_1^\prime, \, k^{\prime})}\vert q_1^\prime \vert (n_1^\prime k^\prime)^\infty} \Omega^{1/2},
\end{align}
where 
\begin{align}\label{theta}
	\Theta=\sum_{n_2\ll N_0/n_1^2} \frac{|A(n_1,n_2)|^2}{n_2},
\end{align} 
and
\begin{align}\label{Omega }
	\Omega = \sum_{n_{2} \ll N_{0}/n_{1}^2}\; \Big| \sum_{\substack{q_2 ^\prime \sim C/(p^{\ell^\prime }q_1^\prime) }} \sum_{m \sim  M_{1}} \frac{\lambda_f(m)}{m^{1/4}} \; \mathcal{C} \left(...\right) \mathcal{J}\left(n_{1}^2 n_{2}, p^{\ell^\prime}q^\prime,m\right)\Big|^{2},
\end{align}
with  $$M_{1} \leq M_{0}=p^{2r-\ell-2\ell_2}R^{\epsilon} \quad \text{and } \; N_{0} = N^{1/2}p^{3\ell/2-3\ell_1}kR^{\epsilon}.$$
\subsection{Poisson summation}\label{pois}
In this step, we will apply the Poisson summation formula to the sum over $n_2$ in \eqref{Omega }. To this end, we first smooth out the sum over $n_2$ using a smooth bump function $V$. In fact,  splitting  $n_2$ into dyadic blocks $n_2 \sim N^\prime$, $N^\prime \ll N_0/n_1^2$, we   arrive at the following expression:
\begin{align}\label{Omega after smoothing}
	\Omega \ll \sup_{N^\prime \ll N_0/n_1^2} \mathop{\sum \sum}_{q_2^\prime, q_2^{\prime \prime} \sim C/(p^{\ell^\prime}q_1^\prime)} \mathop{\sum \sum}_{m, \, m^\prime \sim M_{1}}\frac{ |\lambda_f(m) \lambda_f(m^\prime)|}{(mm^\prime)^{1/4}} |L\left(...\right)|,
\end{align}
where 
\begin{align}\label{L before poisson} 
	L(...)=\sum_{n_{2}\in \mathbb{Z}} V\left(\frac{ n_{2}}{N^\prime}\right) \mathcal{C} \left(...\right) \overline{\mathcal{C} \left(...\right)} \; \mathcal{J}\left(n_{1}^2 n_{2}, p^{\ell^\prime}q^\prime,m\right) \overline{\mathcal{J}\left(n_{1}^2 n_{2}, p^{\ell^\prime}q^{\prime \prime},m^\prime\right)},
\end{align}
$q^{\prime \prime} =q_1^\prime q_2^{\prime \prime}$ and $V$ is a smooth bump function supported on $[1, \, 2]$. Reducing $n_2$  modulo $\mathcal{Q}:={p^{\ell_5} q_1^\prime q_2^\prime q_2^{\prime \prime } k^\prime }/{n_1^\prime}$, i.e., changing the variable  $n_2 \mapsto \nu +n_2\mathcal{Q}$ in the expression of $L(...)$, we arrive at 
\begin{align*}
	&\sum_{\nu \, {\rm mod} \, \mathcal{Q} }\mathcal{C} \left(...\right) \overline{\mathcal{C} \left(...\right)} 
	\sum_{n_{2}\in \mathbb{Z}} V\left(\frac{(n_{2} \mathcal{Q}+\nu)}{N^\prime}\right)    \\
	&\times \mathcal{J}\left(n_{1}^2 (n_{2}\mathcal{Q} +\nu), p^{\ell^\prime}q^\prime,m\right) \overline{\mathcal{J}\left(n_{1}^2 (n_{2}\mathcal{Q} +\nu), p^{\ell^\prime}q^{\prime \prime},m^\prime\right)}.
\end{align*}
Now on applying the Poisson summation formula to the sum over $n_2$, we get 
\begin{align}\label{L after poisson}
	L(...)=\frac{N^\prime}{ \mathcal{Q} } \sum_{n_{2} \in \mathbb{Z}}  \sum_{\nu \, {\rm mod} \; \mathcal{Q} }\mathcal{C} \left(...\right) \overline{\mathcal{C} \left(...\right)}e \left(\frac{\nu n_{2}}{\mathcal{Q} }\right) \mathcal{I} \left(....\right),
\end{align}
where the integral $\mathcal{I}(...)$ is given by
\begin{align}\label{integral after poisson}
	\mathcal{I}(...)=\int_{\mathbb{R}} V(w) \mathcal{J}\left(n_1^2 N^\prime w, p^{\ell^\prime}q^{\prime},m\right) \overline{\mathcal{J}\left(n_1^2 N^\prime w,p^{\ell^\prime} q^{\prime \prime},m^\prime \right)} \; e\left(\frac{-n_{2}N^\prime w}{\mathcal{Q}  }\right) \mathrm{d}w.
\end{align} 
Finally, on plugging \eqref{L after poisson} in \eqref{Omega after smoothing}, we get
\begin{align}\label{final Omega}
	\Omega \ll \frac{ 1 }{ M_{1}^{1/2}} \sup_{N^\prime \ll N_0/n_1^2} N^\prime \mathop{\sum \sum}_{q_2^\prime, q_2^{\prime \prime} \sim C/(p^{\ell^\prime}q_1^\prime)} \mathop{\sum \sum}_{m,m' \sim M_{1}} \sum_{n_{2} \in \mathbb{Z}} \left|\mathfrak{C}(...)\right| \left|\mathcal{I}(...) \right|,
\end{align}
where 
\begin{align}\label{sum over beta}
	\mathfrak{C}(...):=\frac{1}{\mathcal{Q}} \sum_{\nu \, {\rm mod} \; \mathcal{Q} }\mathcal{C} \left(...\right) \overline{\mathcal{C} \left(...\right)}e \left(\frac{\nu  n_{2}}{\mathcal{Q} }\right).
\end{align}
	\subsection{ The sum over $\nu$} Following Remark \ref{coprime q}, we   assume that $(q,p)=1$. Thus $\ell^\prime=0$ and  $q=q^\prime$. Also, we can take $n_1=n_1^\prime$, as otherwise, the character sum in Lemma \eqref{characer for r even} vanishes.  On plugging the expression of $\mathcal{C}(...)$ from   \eqref{character sum  before cauchy } into \eqref{sum over beta}, we see that the sum over $\nu$ is given by 
\begin{align*}
\frac{1}{\mathcal{Q}}\sum_{\nu \, {\rm mod} \; \mathcal{Q} } e\left(\frac{\pm \nu \overline{\alpha} }{p^{\ell_5}q_1^\prime q_2^\prime k^\prime/n_1^\prime}\right)  e\left(\frac{\mp \nu \overline{\alpha^\prime} }{p^{\ell_5}q_1^\prime q_2^{\prime \prime} k^\prime/n_1^\prime}\right)e \left(\frac{\nu  n_{2}}{\mathcal{Q} }\right).
\end{align*}
It's evaluation gives us the following congruence relation:
$$\pm \overline{\alpha}q_2^{\prime \prime} \mp  \overline{\alpha^\prime}q_2^{ \prime}+n_2 \equiv\, 0 \, \mathrm{mod} \, \mathcal{Q}.$$
Thus we are left  with the following expression of $\mathfrak{C}(...)$:
\begin{align} \label{character after poisson}
	&\chi(q q^{\prime \prime}) p^{r+\ell-\ell_1}  \mathop{\sum \sum}_{\substack{d \vert q \\ d^\prime \vert q^{\prime \prime}}} dd^\prime \mu \left(\frac{q}{d}\right) \mu \left(\frac{q^{\prime \prime }}{d^\prime}\right)  \mathop{\sideset{}{^\star} \sum_{\alpha \; {\rm mod} \; p^{\ell_5}qk^\prime/n_1^\prime } \  \sideset{}{^\star} \sum_{\alpha^\prime \; {\rm mod} \; p^{\ell_5}q^{\prime \prime }k^\prime/n_1^\prime} }_{\substack{ h_1(\alpha, \, m)\equiv \; 0 \; {\rm mod} \; d \\h_1(\alpha^\prime, \, m^\prime) \equiv \; 0 \; {\rm mod} \; d^{\prime} \\\pm \overline{\alpha}q_2^{\prime \prime} \mp  \overline{\alpha^\prime}q_2^{ \prime}+n_2 \equiv\, 0 \, \mathrm{mod} \, \mathcal{Q} }}  \notag \\
	&\times \mathop{\sideset{}{^\star} \sum_{u \, {\rm mod} \, p^{(\ell-\ell_1)/2}} \ \sideset{}{^\star} \sum_{v\, {\rm mod} \, p^{r/2}}}_{\substack { h_2(v, u,m) \equiv \; 0 \; {\rm mod} \; p^{r/2}  \\ h_3(v,\, u, \, \alpha) \equiv \; 0 \; {\rm mod} \; p^{(\ell-\ell_1)/2}}} 
	\overline{\chi}\left(v-p^{r-\ell+\ell_1} u\right) e\left(\frac{\alpha \overline{u} \overline{q}n_1 }{p^{\ell-\ell_1}}  \right) e\left(\frac{\overline{v} m\overline{q}}{p^{r}  }\right) \notag  \\
	&\times \mathop{\sideset{}{^\star} \sum_{u^\prime \, {\rm mod} \, p^{(\ell-\ell_1)/2}} \ \sideset{}{^\star} \sum_{v^\prime \, {\rm mod} \, p^{r/2}}}_{\substack { h_2(v^\prime, u^\prime,m^\prime) \equiv \; 0 \; {\rm mod} \; p^{r/2}  \\ h_3(v^\prime ,\, u^\prime, \, \alpha^\prime) \equiv \; 0 \; {\rm mod} \; p^{(\ell-\ell_1)/2}}} 
	{\chi}\left(v^\prime-p^{r-\ell+\ell_1} u^\prime\right)e\left(\frac{-\overline{u^\prime}\alpha^\prime  \overline{q^{\prime \prime} }n_1}{p^{\ell-\ell_1}}\right)e\left(\frac{-\overline{v^\prime} m^\prime \overline{q^{\prime \prime}}}{p^{r} }\right). 
\end{align}
\section{Final estimates for the character sum $\mathcal{\mathfrak{C}(...)}$}\label{rest cancellations for c}
In this section, we will give final estimates for the character sum $\mathfrak{C}(...)$ given in \eqref{character after poisson}. 
\begin{lemma}\label{final character bound}
 Let $\mathfrak{C}_0(...)$  and $\mathfrak{C}_{\neq 0}(...)$  denote the contributions of $n_2=0$ and $n_2 \neq 0$ respectively  to $\mathfrak{C}(...)$  in \eqref{character after poisson}. Then we have 
 \begin{align*}
 	\mathfrak{C}_0(...) \ll  p^{r+2(\ell-\ell_1)} & \mathop{\sum \sum}_{\substack{d,d^\prime  \vert q \\ (d,d^\prime) \vert \frac{(m-m^\prime)}{p^{(r-\ell+\ell_1)}}}} dd^\prime  \frac{ q k }{[d,d^\prime]},
 	 \end{align*}
 and 
 $$	\mathfrak{C}_{\neq 0}(...) \ll \frac{{q_1^{\prime  2} } k (m,n_1^\prime) }{n_1^\prime}p^{r+3(\ell-\ell_1)/2}\mathop{\sum \sum}_{\substack{d_2  \vert (q_2^\prime, -n_2m \pm q_2^{\prime \prime} n_1^\prime p^{\ell_6} ) \\  d_2^\prime  \vert (q_2^{\prime \prime}, \,  n_2m^\prime  \pm q_2^{ \prime} n_1^\prime p^{\ell_6} )  }}d_2d_2^\prime,$$
 \end{lemma}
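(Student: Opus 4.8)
The plan is to bound $\mathfrak{C}(...)$ by splitting into the two contributions $n_2=0$ and $n_2\neq 0$ and in each case disentangling the "big prime" modulus $q=q^\prime$ (coprime to $p$) from the $p$-power part, using that $\mathcal{Q}={p^{\ell_5}q_1^\prime q_2^\prime q_2^{\prime\prime}k^\prime}/{n_1^\prime}$ factors accordingly. First I would estimate the $p$-power part of \eqref{character after poisson} trivially: the inner double sums over $(u,v)$ and $(u^\prime,v^\prime)$ are constrained by the congruences $h_2\equiv 0\ (\mathrm{mod}\ p^{r/2})$ and $h_3\equiv 0\ (\mathrm{mod}\ p^{(\ell-\ell_1)/2})$, which (since $h_2$ is a nondegenerate quadratic in $v$ and $h_3$ is essentially linear in the relevant variable) cut each sum down to $O(p^{\epsilon})$ residues once $m,m^\prime,\alpha,\alpha^\prime$ are fixed. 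Likewise the $\alpha$-sum (resp. $\alpha^\prime$-sum) modulo $p^{\ell_5}qk^\prime/n_1^\prime$ is constrained by $h_1(\alpha,m)\equiv 0\ (\mathrm{mod}\ d)$ together with $e(\pm n_2\overline\alpha/(p^{\ell_5}qk^\prime/n_1^\prime))$; here one exploits that $h_1$ is linear in $\alpha$. The power of $p$ accumulated from the prefactor $p^{r+\ell-\ell_1}$, the $\alpha,\alpha^\prime$-sums over moduli containing $p^{\ell_5}$, and the $(u,v),(u^\prime,v^\prime)$-sums gives the stated $p^{r+2(\ell-\ell_1)}$ (for $n_2=0$) or $p^{r+3(\ell-\ell_1)/2}$ (for $n_2\neq 0$) — the difference of $p^{(\ell-\ell_1)/2}$ reflecting the extra saving available from the nontrivial additive character when $n_2\neq 0$.

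For the $n_2=0$ case I would exploit that the congruence $\pm\overline\alpha q_2^{\prime\prime}\mp\overline{\alpha^\prime}q_2^{\prime}+n_2\equiv 0\ (\mathrm{mod}\ \mathcal{Q})$ becomes $\overline\alpha q_2^{\prime\prime}\equiv\overline{\alpha^\prime}q_2^{\prime}\ (\mathrm{mod}\ \mathcal{Q})$, which for the part coprime to $p$ forces $q_2^\prime=q_2^{\prime\prime}$ and $\alpha\equiv\alpha^\prime$, while matching the $q$-parts of $h_1(\alpha,m)\equiv 0\ (\mathrm{mod}\ d)$ and $h_1(\alpha^\prime,m^\prime)\equiv 0\ (\mathrm{mod}\ d^\prime)$ forces $(d,d^\prime)\mid (m-m^\prime)\cdot(\text{unit})$, after dividing out the common power $p^{r-\ell+\ell_1}$. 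Counting the residual $\alpha$ (which is essentially determined modulo $[d,d^\prime]$ up to $qk/[d,d^\prime]$ choices) and carrying the $p$-power bound through yields exactly the claimed expression $p^{r+2(\ell-\ell_1)}\sum\sum_{d,d^\prime\mid q,\ (d,d^\prime)\mid (m-m^\prime)/p^{r-\ell+\ell_1}} dd^\prime\, qk/[d,d^\prime]$.

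For $n_2\neq 0$ the congruence $\pm\overline\alpha q_2^{\prime\prime}\mp\overline{\alpha^\prime}q_2^{\prime}+n_2\equiv 0\ (\mathrm{mod}\ \mathcal{Q})$ is now a genuine constraint linking $\alpha,\alpha^\prime$. On the $q_2^\prime q_2^{\prime\prime}$-part I would solve for $\alpha^\prime$ in terms of $\alpha$ (or vice versa) modulo the relevant divisor, combine with the $h_1$-congruences mod $d,d^\prime$, and extract a gcd condition of the shape $d_2\mid (q_2^\prime,\ -n_2 m\pm q_2^{\prime\prime}n_1^\prime p^{\ell_6})$ and $d_2^\prime\mid (q_2^{\prime\prime},\ n_2 m^\prime\pm q_2^{\prime}n_1^\prime p^{\ell_6})$, where $\ell_6$ is an appropriate $p$-power exponent collecting the $\overline{p^{2\ell}},\overline{p^{2r}}$ factors appearing in $h_1$. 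The surviving count of $\alpha$ is $O(d_2 d_2^\prime\, q_1^{\prime 2} k (m,n_1^\prime)/n_1^\prime\cdot p^{\epsilon})$ — the factor $q_1^{\prime 2}k(m,n_1^\prime)/n_1^\prime$ arising from the part of the modulus dividing $(n_1^\prime k^\prime)^\infty$ where no cancellation is claimed — and multiplying by the $p$-power bound $p^{r+3(\ell-\ell_1)/2}$ gives the second displayed estimate. The main obstacle will be bookkeeping the many $p$-adic valuations ($\ell_1,\ell_2,\ell_4,\ell_5,\ell_6$) and verifying that the quadratic congruence $h_2\equiv 0\ (\mathrm{mod}\ p^{r/2})$ together with $h_3$ really does pin the $p$-power sums down to $O(p^\epsilon)$ uniformly — i.e. checking nondegeneracy of the relevant Hensel lifts — rather than any deep cancellation; once that is set up, the estimates are a careful but elementary counting of solutions to linear and quadratic congruences.
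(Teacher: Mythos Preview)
Your treatment of the zero frequency $n_2=0$ is essentially the paper's argument and is fine. The genuine gap is in the $n_2\neq 0$ case, precisely at the point where you claim the extra $p^{(\ell-\ell_1)/2}$ saving ``from the nontrivial additive character''.

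Once you bound the $(u,v)$ and $(u',v')$ sums by $O(p^{\epsilon})$ for fixed $\alpha,\alpha'$ --- i.e.\ take absolute values --- you have discarded the phases $e\big(\alpha\overline{u}\,\overline{q}\,n_1/p^{\ell-\ell_1}\big)$ and $e\big(-\alpha'\overline{u'}\,\overline{q''}\,n_1/p^{\ell-\ell_1}\big)$, which are the \emph{only} oscillatory factors in \eqref{character after poisson} depending on the $p$-part $\alpha_2$ of $\alpha$. (The factor $e(\pm n_2\overline{\alpha}/\ldots)$ you invoke is \emph{not} present in \eqref{character after poisson}: it was consumed by the $\nu$-sum and converted into the congruence $\pm\overline{\alpha}q_2''\mp\overline{\alpha'}q_2'+n_2\equiv 0\ \mathrm{mod}\ \mathcal{Q}$.) After absolute values the $\alpha_2$-sum over $p^{\ell_5}$, with $\alpha_2'$ fixed by that congruence, is bounded trivially by $p^{\ell_5}=p^{\ell-\ell_1+\ell_3}$, giving total $p$-power $p^{r+2(\ell-\ell_1)}$ --- identical to the $n_2=0$ bound. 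The missing $p^{(\ell-\ell_1)/2}$ propagates through $\Omega_{\neq 0}$ and $S_{k,\neq 0}(N)$; re-optimising forces $\ell\to r$ and subconvexity is lost.

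The paper does \emph{not} separate the $(u,u')$ sums from the $\alpha_2$-sum. It first solves $\alpha_2'=\alpha_2'(\alpha_2)$ from the Poisson congruence, changes variable to $\gamma_2=n_2\alpha_2+q_2''$, and splits $\gamma_2$ modulo $p^{(\ell-\ell_1)/2}$. The sum over the high part of $\gamma_2$ is a genuine additive-character sum in the combined phase $e\big((\overline{u}\,\overline{q_2'}\gamma_2+\overline{u'}q_2'\overline{\gamma_2})\overline{n_2}\,\overline{q_1'}\,n_1/p^{\ell-\ell_1}\big)$; its evaluation yields the factor $p^{\ell_3+(\ell-\ell_1)/2}$ \emph{together with a new congruence} $h_4(u,u',\gamma_2)\equiv 0\ \mathrm{mod}\ p^{(\ell-\ell_1)/2}$ linking $u,u',\gamma_2$. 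One is then left with five unknowns $(u,u',v,v',\gamma_2)$ subject to the five congruences $h_2,h_2',h_3,h_3',h_4$, and the heart of the argument is showing this system has $O(p^{\epsilon})$ solutions. This is not a routine Hensel count: substituting through the constraints reduces to a degree-six polynomial congruence in one variable modulo $p$, which factors, and the degenerate Hensel lifts (where the derivative vanishes) must be tracked and recovered on average over $m'$. Your plan omits this mechanism entirely; the bookkeeping you anticipate is not the real obstacle --- the missing idea is that the $p^{(\ell-\ell_1)/2}$ saving comes from the \emph{interplay} of the $\gamma_2$-oscillation with the $(u,u')$ variables, not from either piece in isolation.
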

where $\ell_6=2(r-\ell+\ell_1)$. 
\begin{proof}
 Splitting $\alpha$ and $\alpha^\prime$ as 
	\begin{align*}
		\alpha&= \alpha_1  \overline{p^{\ell_5}}p^{\ell_5}+\alpha_2 \overline{qk^\prime/n_1^\prime} qk^\prime/n_1^\prime, \quad  \alpha_1 \ \text{mod} \ qk^\prime /n_1^\prime  \ \text{and} \  \alpha_2 \ \mathrm{mod}\ p^{\ell_5}, \\
		 \alpha^\prime&= \alpha_1^\prime  \overline{p^{\ell_5}}p^{\ell_5}+\alpha_2^\prime \overline{q^{\prime \prime }k^{\prime  }/n_1^\prime} q^{\prime \prime }k^\prime/n_1^\prime, \quad  \alpha_1^\prime  \ \text{mod} \ q^{\prime \prime }k^\prime /n_1^\prime  \ \text{and} \  \alpha_2^\prime \ \mathrm{mod}\ p^{\ell_5},
	\end{align*}
		we observe that the sum over $\alpha_2$ and $\alpha_2^\prime$ is given by 
		\begin{align*}
		\mathfrak{C}_{\alpha_2,\alpha_2^\prime}(...):=	\mathop{\sideset{}{^\star} \sum_{\alpha_2 \; {\rm mod} \; p^{\ell_5} } \  \sideset{}{^\star} \sum_{\alpha_2^\prime \; {\rm mod} \; p^{\ell_5}} }_{\substack{ h_3(v,u, \alpha_2)\equiv \; 0 \; {\rm mod} \; p^{(\ell-\ell_1)/2} \\  h_3(v^\prime,u^\prime, \alpha_2^\prime)\equiv \; 0 \; {\rm mod} \; p^{(\ell-\ell_1)/2}  \\ \pm \overline{\alpha_2}q_2^{\prime \prime} \mp  \overline{\alpha_2^\prime}q_2^{ \prime}+n_2 \equiv\, 0 \, \mathrm{mod} \, p^{\ell_5} }} e\left(\frac{\overline{u}\alpha_2  \overline{q  }n_1}{p^{\ell-\ell_1}}\right) e\left(\frac{-\overline{u^\prime}\alpha_2^\prime \overline{q^{\prime \prime}}n_1 }{p^{\ell-\ell_1}}\right),
		\end{align*}
	and the sum over $\alpha_1$ and $\alpha_1^\prime$ is given by 
	\begin{align*}
	\mathfrak{C}_{\alpha_1,\alpha_1^\prime}(...):=	\mathop{\sideset{}{^\star} \sum_{\alpha_1 \; {\rm mod} \; qk^\prime/n_1^\prime } \  \sideset{}{^\star} \sum_{\alpha_1^\prime \; {\rm mod} \; q^{\prime \prime }k^\prime/n_1^\prime} }_{\substack{ h_1(\alpha_1, \, m)\equiv \; 0 \; {\rm mod} \; d \\h_1(\alpha_1^\prime, \, m^\prime) \equiv \; 0 \; {\rm mod} \; d^{\prime} \\ \pm \overline{\alpha_1}q_2^{\prime \prime} \mp  \overline{\alpha_1^\prime}q_2^{ \prime}+n_2 \equiv\, 0 \, \mathrm{mod} \,q_1^\prime q_2^\prime q_2^{\prime \prime}k^\prime /n_1^\prime }}1,
	\end{align*}
which we will analyze further. 
\subparagraph*{Case 1} Let's first assume that  $n_2=0$.  In this case,  the congruence 
$$\pm \overline{\alpha_1}q_2^{\prime \prime} \mp  \overline{\alpha_1^\prime}q_2^{ \prime}+n_2 \equiv\, 0 \, \mathrm{mod} \,q_1^\prime q_2^\prime q_2^{\prime \prime}k^\prime /n_1^\prime$$
implies that $q_2^\prime =q_2^{\prime \prime }$ and hence  $\alpha_1 \equiv \alpha_1^\prime \, \mathrm{mod}\, q k^\prime /n_1^\prime $. Furthermore, 
 the congruence 
 $$\pm \overline{\alpha_2}q_2^{\prime \prime} \mp  \overline{\alpha_2^\prime}q_2^{ \prime}+n_2 \equiv\, 0 \, \mathrm{mod} \, p^{\ell_5}$$
 yields  $\alpha_2 \equiv \alpha_2^\prime \, \mathrm{mod}\, p^{\ell_5}$. Thus  
 $	\mathfrak{C}_{\alpha_2,\alpha_2^\prime}(...)$ transforms into
 	\begin{align*}
 	\mathfrak{C}_{\alpha_2,\alpha_2^\prime}(...)=	\mathop{\sideset{}{^\star} \sum_{\alpha_2 \; {\rm mod} \; p^{\ell_5} }  }_{\substack{ h_3(v,u, \alpha_2)\equiv \; 0 \; {\rm mod} \; p^{(\ell-\ell_1)/2} \\  h_3(v^\prime,u^\prime, \alpha_2)\equiv \; 0 \; {\rm mod} \; p^{(\ell-\ell_1)/2}  }} e\left(\frac{(\overline{u}-\overline{u^\prime})\alpha_2  \overline{q }n_1 }{p^{\ell-\ell_1}}\right).
 \end{align*}
 Recall that $\ell_5=\ell-\ell_1+\ell_3$. On reducing $\alpha_2$ modulo $p^{ (\ell-\ell_1)/2}$, 
  we get 
 	\begin{align*}
 	\mathfrak{C}_{\alpha_2,\alpha_2^\prime}(...)= 	p^{\ell_3+(\ell-\ell_1)/2 } \mathop{\sideset{}{^\star} \sum_{\alpha_2 \; {\rm mod} \; p^{(\ell-\ell_1)/2} }  }_{\substack{ h_3(v,u, \alpha_2)\equiv \; 0 \; {\rm mod} \; p^{(\ell-\ell_1)/2} \\  h_3(v^\prime,u^\prime, \alpha_2)\equiv \; 0 \; {\rm mod} \; p^{(\ell-\ell_1)/2}  }} e\left(\frac{(\overline{u}-\overline{u^\prime})\alpha_2  \overline{q }n_1 }{p^{\ell-\ell_1}}\right),
 \end{align*}
 along with the congruence $(\overline{u}-\overline{u^\prime})  \overline{q} p^{\ell_3}n_1 \equiv 0\, \mathrm{mod}\, p^{\ell_3+(\ell-\ell_1)/2}$ which implies that $u \equiv u^\prime \, \mathrm{mod}\,  p^{(\ell-\ell_1)/2}$.  Hence, 
 $$h_3(v,u,\alpha_2)-h_3(v^\prime, u^\prime,\alpha_2 ) \equiv {q^\prime } B \overline{\left(v-p^{r- \ell+\ell_1} u\right)} -{q^\prime } B \overline{\left(v^\prime-p^{r- \ell+\ell_1} u\right)}  \equiv \,   0 \; {\rm mod} \; p^{(\ell-\ell_1)/2} ,$$
 giving us $v \equiv v^\prime\, \mathrm{mod} \, p^{(\ell-\ell_1)/2}$ and consequently, 
 $$h_2(v,u,m)-h_2(v^\prime, u^\prime,m^\prime) \equiv (m-m^\prime)v \equiv 0\, \mathrm{mod}\, p^{r-\ell+\ell_1},$$
 which yields the restriction $p^{r-\ell+\ell_1} \vert (m-m^\prime)$.
   Using $\alpha_1 \equiv \alpha_1^\prime \, \mathrm{mod}\, q k^\prime /n_1^\prime $, we arrive at the following expression of $	\mathfrak{C}_{\alpha_1,\alpha_1^\prime}(...)$:
 	\begin{align*}
 	\mathfrak{C}_{\alpha_1,\alpha_1^\prime}(...)=	\mathop{\sideset{}{^\star} \sum_{\alpha_1 \; {\rm mod} \; qk^\prime/n_1^\prime }  }_{\substack{ h_1(\alpha_1, \, m)\equiv \; 0 \; {\rm mod} \; d \\h_1(\alpha_1, \, m^\prime) \equiv \; 0 \; {\rm mod} \; d^{\prime}  }}1.
 \end{align*}
Hence, combining all the above observations together,  we see that $\mathfrak{C}_0(...)$ is dominated by
\begin{align*} 
	\mathfrak{C}_0(...) \ll  p^{r+\ell-\ell_1}p^{\ell_3+(\ell-\ell_1)/2} & \mathop{\sum \sum}_{\substack{d,d^\prime  \vert q \\ (d,d^\prime) \vert \frac{(m-m^\prime)}{p^{(r-\ell+\ell_1)}}}} dd^\prime  \frac{ q k^\prime }{[d,d^\prime]} \,  \mathop{ \sideset{}{^\star} \sum_{u \, {\rm mod} \, p^{(\ell-\ell_1)/2}} \ \sideset{}{^\star} \sum_{u^\prime \, {\rm mod} \,p^{(\ell-\ell_1)/2} }}_{\substack { u \equiv u^\prime   \; {\rm mod} \; p^{(\ell-\ell_1)/2}}}  \notag \\
	&\times \mathop{\sideset{}{^\star} \sum_{v \, {\rm mod} \, p^{r/2}} \ \sideset{}{^\star} \sum_{v^\prime\, {\rm mod} \, p^{r/2}}}_{\substack { h_2(v, u,m) \equiv \; 0 \; {\rm mod} \; p^{r/2}  \\   h_2(v^\prime, u^\prime,m^\prime) \equiv \; 0 \; {\rm mod} \; p^{r/2} }} \mathop{\sideset{}{^\star} \sum_{\alpha_2 \; {\rm mod} \; p^{(\ell-\ell_1)/2} }  }_{\substack{ h_3(v,u, \alpha_2)\equiv \; 0 \; {\rm mod} \; p^{(\ell-\ell_1)/2}   }}1.
\end{align*}
Now we count the number of tuples  $(u,u^\prime, v, v^\prime, \alpha_2)$. We observe that, given $u$ and $v$, the congruence 
	$$h_3(v,u,\alpha_2)={q^\prime } B \overline{\left(v-p^{r- \ell+\ell_1} u\right)} - \bar{u}^{2} n_1^\prime \alpha_2  \equiv \; 0 \; {\rm mod} \; p^{(\ell-\ell_1)/2}$$
determines $\alpha_2$ uniquely. Next we count the number of  $v$ and $v^\prime$ using the Hensel's lemma.  Let's  consider
\begin{align}
	 h_2(v,u,m)=Aq^\prime v^2 +mv-m p^{r-\ell+\ell_1}u  \equiv \; 0 \; {\rm mod} \; p^{r/2},
\end{align}
 in which we  want to count the number of   $v$'s modulo $p^{r/2}$ (keeping $u$ fixed) satisfying  $h_2(v,u,m)$. We may assume that  $(m,p)=1$, as otherwise $h_2(v,u,m)$ has no solutions. Let $v_0$ be a solution of $h_2(v,u,m)$ modulo $p^{r/2}$.  We observe that  $$v_0 \equiv -m\overline{Aq^\prime} \,  {\rm mod} \; p^{r-\ell+\ell_1}. $$
By the Hensel's lemma, it can be lifted uniquely modulo $p^{r/2}$. Hence  $v_0$ is determined  uniquely modulo $p^{r/2}$. Similar arguments apply to $h_2(v^\prime,u^\prime,m^\prime)$ as well. Hence,  on estimating the sum over $u$ trivially, we arrive at 
\begin{align*}
		\mathfrak{C}_0(...) \ll  p^{r+\ell-\ell_1}p^{\ell_3+(\ell-\ell_1)} & \mathop{\sum \sum}_{\substack{d,d^\prime  \vert q \\ (d,d^\prime) \vert \frac{(m-m^\prime)}{p^{(r-\ell+\ell_1)}}}} dd^\prime  \frac{ q k^\prime }{[d,d^\prime]}. 
\end{align*}
Hence we have the first part of the lemma. 
\subparagraph*{Case 2} Now we will analyze $\mathfrak{C}(...)$ for $n_2 \neq 0$.  The congruence 
$$\pm \overline{\alpha_2}q_2^{\prime \prime} \mp  \overline{\alpha_2^\prime}q_2^{ \prime}+n_2 \equiv\, 0 \, \mathrm{mod} \, p^{(\ell_5=\ell-\ell_1+\ell_3)}$$
determines $\alpha_2^\prime$ in terms of $\alpha_2$. In fact, taking $+$  sign for simplicity, we have 
$$\alpha_2^\prime \equiv q_2^\prime \overline{(n_2+\bar{\alpha_2}q_2^{\prime \prime})} \equiv q_2^\prime \alpha_2 \overline{(n_2\alpha_2+q_2^{\prime \prime})} \ \mathrm{mod} \, p^{\ell-\ell_1+\ell_3}. $$
 Thus,  upon changing the variable $\gamma_2=n_2\alpha_2+q_2^{\prime \prime}$, we get the following expression of $\mathfrak{C}_{\alpha_2, \alpha_2^\prime}(...)$:
 	\begin{align*}
  \mathfrak{C}_{\alpha_2, \alpha_2^\prime}(...)=&e\left(\frac{( -\overline{u}q_2^{\prime \prime } \overline{q_2^{ \prime }} -\overline{u^\prime}q_2^\prime \overline{q_2^{\prime \prime }}) \overline{n_2} \overline{q_1^{ \prime}} n_1 }{p^{\ell_5}}\right)  \\
 		&\times \mathop{\sideset{}{^\star} \sum_{\gamma_2 \; {\rm mod} \; p^{\ell-\ell_1+\ell_3} }}_{\substack{ h_3(v,u, \overline{n_2}(\gamma_2-q_2^{\prime \prime } ) )\equiv \; 0 \; {\rm mod} \; p^{(\ell-\ell_1)/2} \\  h_3(v^\prime,u^\prime, \overline{n_2}(\gamma_2-q_2^{\prime \prime })q_2^\prime \overline{\gamma_2} )\equiv \; 0 \; {\rm mod} \; p^{(\ell-\ell_1)/2}   }} e\left(\frac{(\overline{u}  \overline{q_2^\prime}\gamma_2 +\overline{u^\prime} q_2^\prime \overline{\gamma_2})  \overline{n_2} \overline{q_1^{ \prime} }n_1 }{p^{\ell-\ell_1}}\right).
 \end{align*}
 Note that we have assumed $(n_2,p)=1$. In the other case, on extracting the $p$-powers from $n_2$, we can analyze similarly.   Now reducing $\gamma_2$ modulo $(\ell-\ell_1)/2$, we get   the following sum over $\gamma_2$:
 \begin{align*}
 	p^{(\ell_3+(\ell-\ell_1)/2)} \mathop{\sideset{}{^\star} \sum_{\gamma_2 \; {\rm mod} \; p^{(\ell-\ell_1)/2} }}_{\substack{ h_3(v,u, \overline{n_2}(\gamma_2-q_2^{\prime \prime } ) )\equiv \; 0 \; {\rm mod} \; p^{(\ell-\ell_1)/2} \\  h_3(v^\prime,u^\prime, \overline{n_2}(\gamma_2-q_2^{\prime \prime })q_2^\prime \overline{\gamma_2} )\equiv \; 0 \; {\rm mod} \; p^{(\ell-\ell_1)/2}  \\
 	 h_4(u,u^\prime, \gamma_2) \equiv \; 0 \; {\rm mod} \; p^{(\ell-\ell_1)/2}  }} e\left(\frac{(\overline{u}  \overline{q_2^\prime}\gamma_2 +\overline{u^\prime} q_2^\prime \overline{\gamma_2})  \overline{n_2} \overline{q_1^{ \prime} }n_1 }{p^{\ell-\ell_1}}\right),
 \end{align*}
 where $$h_4(u, u^\prime, \gamma_2):=\overline{u}  \overline{q_2^\prime} -\overline{u^\prime} q_2^\prime \overline{\gamma_2}^2.$$
 Hence, we have 
  $$\mathfrak{C}_{\alpha_2, \alpha_2^\prime}(...) \ll  	p^{(\ell_3+(\ell-\ell_1)/2)} \mathop{\sideset{}{^\star} \sum_{\gamma_2 \; {\rm mod} \; p^{(\ell-\ell_1)/2} }}_{\substack{ h_3(v,u, \overline{n_2}(\gamma_2-q_2^{\prime \prime } ) )\equiv \; 0 \; {\rm mod} \; p^{(\ell-\ell_1)/2} \\  h_3(v^\prime,u^\prime, \overline{n_2}(\gamma_2-q_2^{\prime \prime })q_2^\prime \overline{\gamma_2} )\equiv \; 0 \; {\rm mod} \; p^{(\ell-\ell_1)/2}  \\
  		h_4(u,u^\prime, \gamma_2) \equiv \; 0 \; {\rm mod} \; p^{(\ell-\ell_1)/2}  }}1.$$
On  plugging the above expression into \eqref{character after poisson}, we arrive at 
 \begin{align*}
 	\mathfrak{C}_{\neq 0}(...) \ll p^{r+\ell_3+3(\ell-\ell_1)/2} \,  \, \mathfrak{C}_{v,v^\prime,u,u^\prime, \gamma_2}(...) \mathop{\sum \sum}_{\substack{d \vert q \\ d^\prime \vert q^{\prime \prime}}} dd^\prime\,  \mathfrak{C}_{\alpha_1, \alpha_1^\prime}(...), 
 \end{align*}
 where 
 \begin{align*}
 	\mathfrak{C}_{v,v^\prime,u,u^\prime, \gamma_2}(...):=\mathop{\sideset{}{^\star} \sum_{v \, {\rm mod} \, p^{r/2}} \ \sideset{}{^\star} \sum_{v^\prime\, {\rm mod} \, p^{r/2}}}_{\substack { h_2(v,u,m) \equiv \; 0 \; {\rm mod} \; p^{r/2}  \\   h_2(v^\prime, u^\prime,m^\prime) \equiv \; 0 \; {\rm mod} \; p^{r/2} }} \,  \mathop{ \sideset{}{^\star} \sum_{u \, {\rm mod} \, p^{(\ell-\ell_1)/2}} \ \sideset{}{^\star} \sum_{u^\prime \, {\rm mod} \,p^{(\ell-\ell_1)/2}} \  \sideset{}{^\star} \sum_{\gamma_2 \; {\rm mod} \; p^{(\ell-\ell_1)/2} }}_{\substack{ h_3(v,u, \overline{n_2}(\gamma_2-q_2^{\prime \prime } ) )\equiv \; 0 \; {\rm mod} \; p^{(\ell-\ell_1)/2} \\  h_3(v^\prime,u^\prime, \overline{n_2}(\gamma_2-q_2^{\prime \prime })q_2^\prime \overline{\gamma_2} )\equiv \; 0 \; {\rm mod} \; p^{(\ell-\ell_1)/2}  \\
 			h_4(u,u^\prime, \gamma_2) \equiv \; 0 \; {\rm mod} \; p^{(\ell-\ell_1)/2}  }}1.
 \end{align*}
 Our next step is to analyze $	\mathfrak{C}_{v,v^\prime,u,u^\prime, \gamma_2}(...)$. We will prove that 
 \begin{align}\label{counting equation}
 		\mathfrak{C}_{v,v^\prime,u,u^\prime, \gamma_2}(...) \ll p^{\epsilon}. 
 \end{align} 
We have five variables $v, \,  v^\prime, \,  u,\,  u^\prime$ and $\gamma_2$ with the following five congruences:
 \begin{align}\label{h2}
 h_2(v,u,m)=Aq^\prime v^2 +mv-m p^{r-\ell+\ell_1}u  \equiv \; 0 \; {\rm mod} \; p^{r/2},
 \end{align}
 \begin{align}\label{h2prime}
	h_2(v^\prime,\, u^\prime)=Aq^{\prime \prime} v^{\prime 2} +m^\prime v^\prime-m^\prime p^{r-\ell+\ell_1}u^\prime  \equiv \; 0 \; {\rm mod} \; p^{r/2},
\end{align}

\begin{align}\label{h3}
	h_3(v,\, u,...)={q^\prime } B \overline{\left(v-p^{r- \ell+\ell_1} u\right)} - \bar{u}^{2} n_1^\prime \overline{n_2}(\gamma_2-q_2^{\prime \prime } )  \equiv \; 0 \; {\rm mod} \; p^{(\ell-\ell_1)/2},
\end{align}
 \begin{align}\label{h3prime}
 		h_3(v^\prime, \, u^\prime,... )={q^{\prime \prime} } B \overline{\left(v^\prime-p^{r- \ell+\ell_1} u^\prime \right)} - \bar{u^\prime}^{2} n_1^\prime \overline{n_2}(\gamma_2-q_2^{\prime \prime })q_2^\prime \overline{\gamma_2}   \equiv \; 0 \; {\rm mod} \; p^{(\ell-\ell_1)/2},
 \end{align}
 \begin{align} \label{h4}
 	h_4(u, \,  u^\prime, \gamma_2)=\overline{u}  \overline{q_2^\prime} -\overline{u^\prime} q_2^\prime \overline{\gamma_2}^2 \equiv \; 0 \; {\rm mod} \; p^{(\ell-\ell_1)/2}.
 \end{align}
We observe from  \eqref{h4} that, fixing $u$ and $u^\prime$, $\gamma_2$ has at most two choices. In fact, 
\begin{align}\label{gamma 2}
	\gamma_2^2 \equiv u^\prime \overline{u}{\overline{q_2^\prime}}^2 \; {\rm mod} \; p^{(\ell-\ell_1)/2}.
\end{align}
Let's now consider \eqref{h2}, in which we  want to find the number of solutions (keeping $u$ fixed) of $h_2(v,u,m)$ modulo $p^{r/2}$. We argue as in  the zero frequency case.  Let $v_0$ be a solution of $h_2(v,u,m)$ modulo $p^{r/2}$.  We observe that  $$v_0 \equiv -m\overline{Aq^\prime} \,  {\rm mod} \; p^{r-\ell+\ell_1}. $$
By Hensel's lemma, it can be lifted uniquely modulo $p^{r/2}$. Hence  $v_0$ is determined  uniquely modulo $p^{r/2}$. The same arguments can be applied to \eqref{h2prime}. Thus, $h_2(v^\prime,u^\prime,m^\prime)$ also has a unique  solution, say,  $v_0^\prime$ modulo $p^{r/2}$ such that 
$$v_0^\prime \equiv -m^\prime\overline{Aq^{\prime \prime}} \,  {\rm mod} \; p^{r-\ell+\ell_1}. $$ 
Now it remains to count the number of pairs $(u, u^\prime)$. 
On substituting \eqref{gamma 2} and $v_0-p^{r-\ell+\ell_1}u=-\overline{m}Aq^\prime v_0^2$ into \eqref{h3}, we get
\begin{align}\label{u prime }
	u{q_2^{\prime }}^2 \left(-\overline{n_1^\prime} n_2  Bm\overline{A} \bar{v_0}^2u^2+q_2^{\prime \prime }\right)^2  \equiv  u^\prime  \; {\rm mod} \; p^{(\ell-\ell_1)/2},
\end{align}
 which determines $u^\prime$ uniquely in terms of $u$, as $v_0$ is depending only on $u$.  Now substituting \eqref{gamma 2} in place of  $\gamma_2^2$ and $-\overline{m^\prime}Aq^{\prime \prime}$ in place of  $v_0^\prime-p^{r-\ell+\ell_1}u^\prime $ into \eqref{h3prime}, we arrive at 
 \begin{align}\label{u }
u^\prime (\overline{ q_2^{\prime \prime }q_2^{\prime }})^2 \left(-\overline{n_1^\prime q_2^\prime} n_2  Bm^\prime \overline{A} \overline{v_0^\prime}^2{u^\prime}^2+1\right)^2  \equiv  u  \; {\rm mod} \; p^{(\ell-\ell_1)/2}.
 \end{align}
Reducing \eqref{u prime } and \eqref{u } modulo $p^{r-\ell+\ell_1}$, we get 
\begin{align}\label{u prime reduced}
		u \left(B_2u^2+q_2^\prime q_2^{\prime \prime }\right)^2  \equiv  u^\prime  \; {\rm mod} \; p^{(r-\ell+\ell_1)},
\end{align}
 \begin{align} \label{u reduced}
	u^\prime  \left( B_3{u^\prime}^2+ \overline{ q_2^{\prime \prime }q_2^{\prime }} \right)^2  \equiv  u  \; {\rm mod} \; p^{(r-\ell+ \ell_1)},
\end{align}
where $B_2= -\overline{n_1^\prime} n_2  B\overline{m}A {q^\prime}^2q_2^\prime $ and $B_3=-\overline{n_1^\prime q_2^\prime} n_2  B \overline{m^\prime}A {q^{\prime \prime }}^2\overline{ q_2^{\prime \prime }q_2^{\prime }}$. On plugging \eqref{u prime reduced} into \eqref{u reduced}, we get 
 \begin{align} 
	u \left(B_2u^2+q_2^\prime q_2^{\prime \prime }\right)^2  \left( B_3	u^2 \left(B_2u^2+q_2^\prime q_2^{\prime \prime }\right)^4+ \overline{ q_2^{\prime \prime }q_2^{\prime }} \right)^2  \equiv  u  \; {\rm mod} \; p^{(r-\ell+ \ell_1)}. 
\end{align}
Reducing it modulo $p$, we get 
 \begin{align} 
	\left(B_2u^2+q_2^\prime q_2^{\prime \prime }\right)  \left( B_3	u^2 \left(B_2u^2+q_2^\prime q_2^{\prime \prime }\right)^4+ \overline{ q_2^{\prime \prime }q_2^{\prime }} \right)  \equiv  \pm 1  \; {\rm mod} \; p. 
\end{align}
By the change of variable  $B_2u^2+q_2^\prime q_2^{\prime \prime } \mapsto u_3$, we arrive at 
 \begin{align} 
	h_4(u_3):=   B_3\overline{B_2}u_3^6-B_3\overline{B_2}
	q_2^{\prime }q_2^{\prime \prime} u_3^5+ \overline{ q_2^{\prime \prime }q_2^{\prime }} u_3 \mp 1 \equiv  0  \; {\rm mod} \; p. 
\end{align}
Let's  consider the negative sign (similar arguments hold true for the positive sign). In this case, we have 
 \begin{align*} 
	h_4(u_3)=(q_2^{\prime \prime }q_2^{\prime }B_3\overline{B_2}u_3^5+1)(\overline{ q_2^{\prime \prime }q_2^{\prime }}u_3-1)   \equiv  0  \; {\rm mod} \; p.
\end{align*}
Thus either $u_3 \equiv q_2^{\prime \prime}q_2^\prime  \; {\rm mod} \; p $ or $(q_2^{\prime \prime }q_2^{\prime }B_3\overline{B_2}u_3^5+1)  \equiv  0  \; {\rm mod} \; p$.  Thus, $h_4(u_3)$ has at most 6 solutions modulo $p$. Let $u_0$  be a solution of $h_4(u_3)$ modulo $p$. If $(\frac{\mathrm{d}h_3}{\mathrm{d}u_3}(u_0),p)=1$, then, using the Hensel's lemma, we get a unique lift. Moreover, it has  $p$-many lifts if   $\frac{\mathrm{d}h_3}{\mathrm{d}u_3}(u_0) \equiv\, 0\, \mathrm{mod} \, p
$ and $h_3(u_0) \equiv 0\, \mathrm{mod}\, p^2$. Let's first take $u_0=q_2^{\prime \prime}q_2^\prime$. Thus we get 
$$\frac{\mathrm{d}h_3}{\mathrm{d}u_3}(u_0) \equiv q_2^{\prime \prime }q_2^{\prime }B_3\overline{B_2}(q_2^{\prime \prime}q_2^\prime)^5+1=m\overline{m^\prime}q_2^{\prime 2}q_2^{\prime \prime 7}+1 \equiv 0\, \mathrm{mod}\, p. $$
Thus $m^\prime \equiv -mq_2^{\prime 2}q_2^{\prime \prime 7} \, \mathrm{mod}\, p$, from which we save $p$ which analyzing the sum over  $m^\prime$. This tells us that,if  we loose $p$  while lifting modulo $p^2$, it is gained back from the $m^\prime$ sum. Similar arguments holds for other solutions as well. Hence, on applying the Hensel lemma repeatedly, we get the desired claim \eqref{counting equation}.
Thus we have 
 \begin{align*}
	\mathfrak{C}_{\neq 0}(...) \ll p^{r+\ell_3+3(\ell-\ell_1)/2+\epsilon} \mathop{\sum \sum}_{\substack{d \vert q^\prime \\ d^\prime \vert q^{\prime \prime}}} dd^\prime\, \mathop{\sideset{}{^\star} \sum_{\alpha_1 \; {\rm mod} \; qk^\prime/n_1^\prime } \  \sideset{}{^\star} \sum_{\alpha_1^\prime \; {\rm mod} \; q^{\prime \prime }k^\prime/n_1^\prime} }_{\substack{ h_1(\alpha_1, \, m)\equiv \; 0 \; {\rm mod} \; d \\h_1(\alpha_1^\prime, \, m^\prime) \equiv \; 0 \; {\rm mod} \; d^{\prime} \\ \pm \overline{\alpha_1}q_2^{\prime \prime} \mp  \overline{\alpha_1^\prime}q_2^{ \prime}+n_2 \equiv\, 0 \, \mathrm{mod} \,q_1^\prime q_2^\prime q_2^{\prime \prime}k^\prime /n_1^\prime }}1, 
\end{align*}
 which we will analyze now.  The above sum can be dominated by a product of two sums $	\mathfrak{C}_{\neq 0}(...) \ll \mathfrak{C}_{\neq 0, 1}\mathfrak{C}_{\neq 0,2}$, where  
 \begin{align*}
 		\mathfrak{C}_{\neq 0,1} \ll p^{r+\ell_3+3(\ell-\ell_1)/2+\epsilon} \mathop{\sum \sum}_{\substack{d_1,d_1^\prime  \vert q_1^\prime }} d_1d_1^\prime\, \mathop{\sideset{}{^\star} \sum_{\alpha_1 \; {\rm mod} \; q_1^\prime k^\prime/n_1^\prime } \  \sideset{}{^\star} \sum_{\alpha_1^\prime \; {\rm mod} \; q_1^{ \prime }k^\prime/n_1^\prime} }_{\substack{ h_1(\alpha_1, \, m)\equiv \; 0 \; {\rm mod} \; d_1 \\h_1(\alpha_1^\prime, \, m^\prime) \equiv \; 0 \; {\rm mod} \; d_1^{\prime} \\ \pm \overline{\alpha_1}q_2^{\prime \prime} \mp  \overline{\alpha_1^\prime}q_2^{ \prime}+n_2 \equiv\, 0 \, \mathrm{mod} \,q_1^\prime k^\prime /n_1^\prime }}1,
 \end{align*}
  and 
   \begin{align*}
  	\mathfrak{C}_{\neq 0,2} \ll  \mathop{\sum \sum}_{\substack{d_2  \vert q_2^\prime \\ d_2^\prime \vert q_2^{\prime \prime } }} d_2d_2^\prime\, \mathop{\sideset{}{^\star} \sum_{\alpha_1 \; {\rm mod} \;  q_2^\prime } \  \sideset{}{^\star} \sum_{\alpha_1^\prime \; {\rm mod} \; q_2^{ \prime \prime } }}_{\substack{ h_1(\alpha_1, \, m)\equiv \; 0 \; {\rm mod} \; d_2 \\h_1(\alpha_1^\prime, \, m^\prime) \equiv \; 0 \; {\rm mod} \; d_2^{\prime} \\ \pm \overline{\alpha_1}q_2^{\prime \prime} \mp  \overline{\alpha_1^\prime}q_2^{ \prime}+n_2 \equiv\, 0 \, \mathrm{mod} \,q_2^\prime q_2^{\prime \prime }  }}1.
  \end{align*}
 In the second sum, since $(n_1^\prime, q_2^\prime q_2^{\prime \prime})=1$, we get $ \alpha_1  \equiv  -m\overline{n_1^\prime} \overline{p^{2(r+\ell_1)}}p^{2\ell} \, \mathrm{mod}\, d_2$ and  $ \alpha_1^\prime  \equiv  -m^\prime \overline{n_1^\prime} \overline{p^{2(r+\ell_1)}}p^{2\ell} \, \mathrm{mod}\, d_2^\prime$. Now using the congruence modulo $q_2^\prime q_2^{\prime \prime}$, we infer that 
 \begin{align}\label{c2}
 		\mathfrak{C}_{\neq 0,2} \ll  \mathop{\sum \sum}_{\substack{d_2  \vert (q_2^\prime, -n_2mp^{2\ell} \pm q_2^{\prime \prime} n_1^\prime p^{2(r+\ell_1)} ) \\  d_2^\prime  \vert (q_2^{\prime \prime}, n_2m^\prime p^{2\ell} \pm q_2^{ \prime} n_1^\prime p^{2(r+\ell_1)} )  }}d_2d_2^\prime.
 \end{align}
 In the first sum $\mathfrak{C}_{\neq 0,2}$, the congruence condition modulo $q_1^{\prime}k^{\prime} /n_1^\prime$ determines $\alpha_1^\prime$ uniquely in terms of $\alpha_1$, and hence
  \begin{align}\label{c1}
 	 \mathop{\sum \sum}_{\substack{d_1,d_1^\prime  \vert q_1^\prime }} d_1d_1^\prime\, \mathop{ \sideset{}{^\star} \sum_{\alpha_1 \; {\rm mod} \; q_1^\prime k^\prime/n_1^\prime } \  }_{\substack{ h_1(\alpha_1, \, m)\equiv \; 0 \; {\rm mod} \; d_1   }}1 \ll \frac{{q_1^{\prime 2} } k^\prime (m,n_1^\prime) }{n_1^\prime},
 \end{align}
 as 
 $h_1(\alpha_1,m)=n_1^\prime \alpha_1 p^{2\ell_1}\overline{p^{2\ell}}  \equiv  -m \overline{p^{2r}} \, \mathrm{mod}\, d_1 $ has $(n_1^\prime, m)$ many solutions modulo $d_1$.  Finally combining  estimates from  \eqref{c2} and \eqref{c1}, we get the lemma.
 \end{proof}

	\section{Estimates for the integral $\mathcal{J}(...)$ }
In this section, we will analyze the integral transform  $\mathcal{I}(...)$ given in \eqref{integral after poisson}. We have the following lemma.

\begin{lemma}\label{bound for integral}
	Let $\mathcal{I}(...)$  be  as in \eqref{integral after poisson}. Then we have 
	\begin{align*}
		\mathcal{I}\left(...\right) \ll \frac{M_1}{p^{2r-2\ell_2-\ell}}\frac{C^2Q^{\epsilon}}{Q^2}. 
	\end{align*}
Moreover, if $$|n_2| \gg R^{\epsilon} \frac{Q}{C} \frac{\mathcal{Q}}{N^\prime}:=N_2,$$
then $\mathcal{I}(...) $ is negligibly small. 
\end{lemma}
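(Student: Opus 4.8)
The plan is to substitute the definitions of $I_{1}$ and $I_{2}$ (hence of $\mathcal{J}$) into $\mathcal{I}(...)$, turning it into a nested oscillatory integral, and then to read off the two assertions by localising the inner variables through integration by parts and the stationary phase lemmas (Lemmas~\ref{derivative bound} and~\ref{stationaryphase}). The first observation is that in $\mathcal{J}(n_{1}^{2}N'w,q,m)$ the variable $w$ enters \emph{only} through the factor $\bigl(n_{1}^{2}N'w\,N/(qp^{\ell-\ell_{1}})^{3}k\bigr)^{-i\tau}=(\cdots)^{-i\tau}w^{-i\tau}$ inside $I_{1}$; nothing else in $\mathcal{J}$ — in particular neither $I_{2}$ nor the $x$-integral — depends on $w$. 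Writing $\mathcal{I}(...)$ as an integral over $w$ and over the inner variables $x,z,\tau,y$ of the first copy of $\mathcal{J}$ and $x',z',\tau',y'$ of the conjugated copy, the $w$-integral is exactly
\[
\int_{\mathbb{R}} V(w)\, w^{-i(\tau-\tau')}\, e\!\left(\frac{-n_{2}N'w}{\mathcal{Q}}\right)\,\mathrm{d}w .
\]
By the support condition \eqref{size of tau}, the factors $\widetilde{W}(\tau,\cdot)$, $\widetilde{W}(\tau',\cdot)$ are negligibly small unless $|\tau|,|\tau'|\asymp N|x|/(p^{\ell}qQ)\ll R^{\epsilon}Q/C$, so on the relevant support $|\tau-\tau'|\ll R^{\epsilon}Q/C$. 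Since $V$ is supported on $[1,2]$ with $V^{(j)}\ll1$, the $w$-phase has derivative of size $\gg|n_{2}|N'/\mathcal{Q}$ once $|n_{2}|N'/\mathcal{Q}\gg|\tau-\tau'|$, and repeated integration by parts in $w$ shows that the $w$-integral, hence $\mathcal{I}(...)$, is negligibly small unless $|n_{2}|N'/\mathcal{Q}\ll R^{\epsilon}Q/C$, i.e. unless $|n_{2}|\ll R^{\epsilon}(Q/C)(\mathcal{Q}/N')=N_{2}$. This is the second assertion.

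For the bound on $\mathcal{I}(...)$ I would restrict to $|n_{2}|\ll N_{2}$, open $\mathcal{J}$ and $\overline{\mathcal{J}}$ fully, and evaluate the inner integrals in turn. The $x$- and $x'$-integrals come first: the $x$-dependent part of $I_{1}(n_{1}^{2}n_{2},q,x)I_{2}(q,m,x)$ is $W_{1}(x)g(q,x)e\bigl(Nx(z-y)/(p^{\ell}qQ)\bigr)$, and property (2) of $g$ in \eqref{g properties} together with $\mathrm{supp}(W_{1})\subset[-2R^{\epsilon},2R^{\epsilon}]$ gives, after integration by parts, a contribution $\ll R^{\epsilon}$ which is negligible unless $|z-y|\ll R^{\epsilon}p^{\ell}qQ/N\asymp R^{\epsilon}C/Q$ (and similarly for the primed copy). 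Next the $y$- and $y'$-integrals: $I_{2}(q,m,x)$ has phase $\phi(y)=-Nxy/(p^{\ell}qQ)\pm 2\sqrt{Nmy}/(p^{r-\ell_{2}}q)$ with $\phi''(y)\gg\sqrt{Nm}/(p^{r-\ell_{2}}q\,y^{3/2})\gg\sqrt{Nm}/(p^{r-\ell_{2}}C)$ on $\mathrm{supp}(U)$, so Lemma~\ref{stationaryphase} (or Lemma~\ref{derivative bound} when the stationary point lies outside $\mathrm{supp}(U)$) yields $I_{2}\ll(p^{r-\ell_{2}}C/\sqrt{Nm})^{1/2}R^{\epsilon}=(C/Q)^{1/2}(m/p^{2r-2\ell_{2}-\ell})^{-1/4}R^{\epsilon}$, plus the constraint that the stationary point $y_{0}\asymp(m/p^{2r-2\ell_{2}-\ell})/x^{2}$ lie in $\mathrm{supp}(U)$; combined with $z\approx y$ from the $x$-integral this pins the effective $x$- (and $x'$-) range to a short window around $x_{0}\asymp\sqrt{m/p^{2r-2\ell_{2}-\ell}}\,R^{\epsilon}$. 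The $z$- and $\tau$-integrals defining $I_{1}$ are then controlled by $|\gamma_{\pm}(-1/2+i\tau)|\asymp1$, the bound $|I_{1}|\ll Q/C$ inherited from $G_{\pm}(y)\ll(Q/q)(Ny)^{1/2}$ at $\sigma=-1/2$, and the cancellation in $\tau$ furnished by the factor $A^{-i\tau}$ (the conductor-lowering), together with the $z$-stationary point $z_{0}\asymp\tau p^{\ell}qQ/(Nx)$; finally, on the generic range $|n_{2}|\asymp N_{2}$ the $w$-integral itself contributes a factor $(|n_{2}|N'/\mathcal{Q})^{-1/2}$ rather than $R^{\epsilon}$. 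Collecting these bounds over the short $x$- and $x'$-windows gives $\mathcal{I}(...)\ll M_{1}p^{-(2r-2\ell_{2}-\ell)}C^{2}Q^{\epsilon}/Q^{2}$.

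The main obstacle is the bookkeeping in this last step. The $z$-integral inside $\widetilde{W}$, the $\tau$-integral inside $I_{1}$ and the $y$-integral inside $I_{2}$ all have stationary points whose positions depend on the outer variable $x$ (and on $n_{1}^{2}n_{2}$ and $m$), and the saving needed beyond the bound obtained by estimating the inner integrals independently — which falls short of the assertion by powers of $C/Q$ and of $M_{1}/p^{2r-2\ell_{2}-\ell}$ — materialises only once one tracks precisely how the $x$-integration constraint $z\approx y$ couples these stationary points, shortens the $x$-range and feeds into the cancellation in $\tau$ and in $w$. Verifying en route that the hypotheses of Lemma~\ref{stationaryphase} (the size conditions $Y\ge Z^{3\delta}$, $b-a\ge U\ge QZ^{\delta/2}/\sqrt{Y}$, and the derivative bounds on $g$ and $f$) hold uniformly for $q\asymp C$, $m\asymp M_{1}$, $n_{2}\asymp N'$ is the routine but lengthy part.
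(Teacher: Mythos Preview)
Your treatment of the second assertion (the $n_2$--cutoff) is essentially the paper's argument: both amount to observing that $w$ enters $\mathcal{J}$ only through $w^{-i\tau}$, that $|\tau|\ll R^\epsilon Q/C$ on the effective support of $\widetilde{W}$, and then integrating by parts in $w$.

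For the first assertion there is a genuine gap. The bound has to hold \emph{uniformly} in $n_2$, in particular at $n_2=0$, where your proposed extra saving from the $w$--integral (the factor $(|n_2|N'/\mathcal{Q})^{-1/2}$) disappears. The paper never uses the $w$--integral for this bound; instead it proves the pointwise estimate
\[
\mathcal{J}(n_1^2N'w,q,m)\ \ll\ \frac{\sqrt{M_1}}{p^{\,r-\ell_2-\ell/2}}\cdot\frac{C}{Q}\,Q^{\epsilon}
\]
and then bounds $\mathcal{I}\ll|\mathcal{J}|\cdot|\overline{\mathcal{J}}|$ trivially. Your localisations $|z-y|\ll C/Q$ and $|x|\asymp\sqrt{M_1}/p^{\,r-\ell_2-\ell/2}$ are correct, as is the stationary-phase bound for $I_2$; but the step you leave as ``cancellation in $\tau$ furnished by the factor $A^{-i\tau}$'' is precisely where the missing power of $C/Q$ sits, and the mechanism you describe does not produce it. Using only $|\gamma_\pm(-1/2+i\tau)|\asymp 1$ bounds the $\tau$--integral by its length $\asymp Q/C$, and no linear-in-$\tau$ phase such as $A^{-i\tau}$ can improve that.

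What the paper does (for $C\ll Q^{1-\epsilon}$; the complementary range is trivial since then $C^2/Q^2\gg Q^{-\epsilon}$) is: after writing $z=y+u$ and carrying out the $y$--stationary phase, the stationary point $y_0=p^{r-\ell_2}q\tau/(2\pi\sqrt{Nm})$ depends on $\tau$, so the resulting $\tau$--integrand carries the phase $e(\tau\log(e/y_0)/\pi)$ together with the factor $1/\sqrt{\tau}$. One then inserts the Stirling expansion
\[
\gamma_\pm(-1/2+i\tau)=e^{3i\tau\log(\tau/e\pi)}\,\Phi_\pm(\tau),\qquad \Phi_\pm^{(j)}(\tau)\ll\tau^{-j},
\]
and the combined $\tau$--phase has second derivative $\asymp 1/\tau$; Lemma~\ref{derivative bound} (second derivative bound) then gives the $\tau$--integral $\ll 1$. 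This use of the oscillation hidden in $\gamma_\pm$ is the missing ingredient in your sketch; without it the final bound is off by a factor $(Q/C)^2$, which is exactly the discrepancy you flag but do not resolve.
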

\begin{proof} 
	Let's first  recall from \eqref{integral after poisson} that 
		\begin{align}\label{I total}
		\mathcal{I}(...)=\int_{\mathbb{R}} V(w) \mathcal{J}\left(n_1^2 N^\prime w, p^{\ell^\prime}q^\prime,m\right) \overline{\mathcal{J}\left(n_1^2 N^\prime w, p^{\ell^\prime}q^{\prime \prime},m^\prime \right)} \; e\left(\frac{-n_{2}N^\prime w}{\mathcal{Q}  }\right) \mathrm{d}w,
	\end{align} 
where 
	\begin{align*}
	\mathcal{J}\left(n_1^2N^\prime w,  p^{\ell^\prime}q^\prime,  m\right)&=\int_{\mathbb{R}}W_1(x)g(p^{\ell^\prime}q^\prime,x)I_{1}(n_{1}^2 N^\prime w,p^{\ell^\prime}q^\prime,x)I_{2}\left(p^{\ell^\prime}q^\prime,m,x\right)\mathrm{d}x,
\end{align*}
%
which we will analyze now. Let's write  $q=p^{\ell^\prime}q^\prime$ for simplicity.	On plugging the expressions of $I_{1}(n_1^2N^\prime w,q,x)$ and $	I_2(q,m,x)$ from \eqref{integral of gl3} and \eqref{integral of gl2 } respectively in the above expression, we arrive at  the following expression of $	\mathcal{J}\left(n_1^2N^\prime w,  q,  m\right)$:
	\begin{align}\label{J int}
	\frac{1}{2\pi}\int_{-\infty}^{\infty } \int_{0}^{\infty } U(y)\int_{0}^{\infty }&\frac{W(z)}{\sqrt{z}}	\int_{\mathbb{R}}W_1(x)g(q,x)e\left(\frac{Nx(z-y)}{p^{\ell}qQ}\right) \gamma_{\pm}(-1/2+i\tau) \notag \\
	& \times  z^{-i\tau} e\left( \pm\frac{2\sqrt{Nmy}}{p^{r-\ell_2} q} \right)\left(\frac{n_1^2N^\prime Nw}{(qp^{\ell-\ell_1})^3 k}\right)^{-i\tau}  \mathrm{d}x\,  \mathrm{d}z\,   \mathrm{d}y \, \mathrm{d}\tau.
	\end{align}
On differentiating the above expression  with respect to $w$, we see that 
$$ \frac{\partial ^j}{\partial w^j}	\mathcal{J}\left(n_1^2N^\prime w,  q,  m\right) \ll  \left(\frac{N}{p^{\ell}qQ}\right)^{j+1} \ll \left(\frac{Q}{C}\right)^{j+1},$$
as $|\tau| \asymp N|x|/(p^{\ell}qQ)$ from \eqref{size of tau}, and $\gamma_{\pm}(-1/2+i\tau) \ll 1$.
Thus, on applying integration by parts repeatedly on the $w$-integral in \eqref{I total}, we see that $\mathcal{I}(...)$  is negligibly small unless 
$$|n_2| \ll R^{\epsilon} \frac{Q}{C} \frac{\mathcal{Q}}{N^{\prime}}.$$
This gives the second part of the lemma.
On considering the $y$-integral in \eqref{J int}, we observe that $$\frac{N|x|}{p^{\ell}qQ} \asymp \frac{\sqrt{Nm}}{p^{r-\ell_2}q},$$
as otherwise, using the first derivative bound, Lemma \ref{derivative bound}, the $y$-integral $I_2(q,m,x)$ will be negligibly small. Hence, we can assume that  
\begin{align}\label{range of x}
	|x| \asymp \sqrt{m}/p^{r-\ell_2-\ell/2}.
\end{align} 
%
%
%
%
%
\subparagraph*{Case 1} $q,\, p^{\ell^\prime}q^{\prime \prime} \sim C \geq Q^{1-\epsilon}$. \\
In this case, we estimate $	\mathcal{J}\left(n_1^2N^\prime w,  q,  m\right)$ trivially.   In fact, we have 
$$	\mathcal{J}\left(n_1^2N^\prime w,  q,  m\right) \ll \int_{\mathbb{R}}|W_1(x)||g(q,x)| \mathrm{d}x \ll Q^{\epsilon}\sqrt{M_1}/p^{r-\ell_2-\ell/2},$$
where we used \eqref{g properties} and \eqref{range of x} for the second inequality.  On analyzing  $\mathcal{J}\left(n_1^2 N^\prime w, q^\prime,m^\prime \right)$ similarly and plugging the correspoding estimates into \eqref{integral after poisson}, we get 
$$\mathcal{I}(...) \ll M_1/p^{2r-2\ell_2-\ell}.$$
\subparagraph*{Case 2} $q, \, p^{\ell^\prime}q^{\prime \prime} \sim C \ll  Q^{1-\epsilon}.$\\
	Let's  consider the $x$-integral in \eqref{J int}
	\begin{align*}
		I_{z-y}:=\int_{\mathbb{R}}W_1(x)g(q,x)e\left(\frac{Nx(z-y)}{p^{\ell}qQ}\right)\mathrm{d}x. 
	\end{align*}
We will analyze it in two subcases. 
\subparagraph*{Subcase 2.1}$m  \sim M_1 \asymp M_0=R^{\epsilon}p^{2r-2\ell_2-\ell}.$\\
In this situation, we have  
$$	I_{z-y}=\int_{|x| \asymp 1}W_1(x)g(q,x)e\left(\frac{Nx(z-y)}{p^{\ell}qQ}\right)\mathrm{d}x. $$
Using the second property \eqref{g properties} of $g(q,x)$, we observe that $$ \frac{\partial ^j}{\partial x^j}g(q,x) \ll \frac{\log Q}{|x|^j} \min \left\lbrace \frac{Q}{q}, \frac{1}{|x|}\right\rbrace \ll Q^{\epsilon j}.$$
Thus, using integration by parts repeatedly, the above integral $I_{z-y}$ is  negligibly small  unless $|z-y| \ll Q^\epsilon q/Q$. 
\subparagraph*{Subcase 2.2}$m  \sim M_1 \ll  M_0^{1-\epsilon}.$\\
In this case, we have the following $x$-integral
$$	I_{z-y}=\int_{|x| \ll R^{-\epsilon} }W_1(x)g(q,x)e\left(\frac{Nx(z-y)}{p^{\ell}qQ}\right)\mathrm{d}x. $$
Using the first property (see \eqref{g properties}) of $g(q,x)$,  we observe that $$g(q,x)-1=O \left(\frac{Q}{q} \left(\frac{q}{Q}+|x|\right)^{B}\right) \ll R^{-2020}.$$ 
	Thus we can replace $g(q,x)$ by $1$ at the cost of a negligible error term so that we essentially have
	\begin{align*}
		\int_{ |x| \ll R^{-\epsilon}} W_1(x)e\left(\frac{Nx(z-y)}{p^{\ell}qQ}\right)\mathrm{d}x.
	\end{align*}
	Now using integration by parts, we observe that the above integral is negligibly small unless $ |z-y| \ll Q^\epsilon{q}/{Q}.$ Thus combining Subcases 2.1 and 2.2, we conclude that the $x$-integral $I_{z-y}$ is negligibly small unless  $ |z-y| \ll Q^\epsilon{q}/{Q}$.
Now  writing  $z=y+u$, with $|u| \ll  Q^\epsilon{q}/{Q}$, we get the following expression   for $	\mathcal{J}\left(n_1^2N^\prime w,  q,  m\right)$:
	\begin{align}\label{simplified integral }
	\frac{1}{2\pi}&\int_{0}^{\infty }I_u\int_{-\infty}^{\infty } \ \gamma_{\pm}(-1/2+i\tau)\left(\frac{n_1^2N^\prime Nw}{(qp^{\ell-\ell_1})^3 k}\right)^{-i\tau}  \notag \\
& \times  \int_{0}^{\infty } U(y)\frac{W(y+u)}{\sqrt{y+u}}	(y+u)^{-i\tau} e\left( \pm\frac{2\sqrt{Nmy}}{p^{r-\ell_2} q} \right)   \mathrm{d}y \, \mathrm{d}\tau \, \mathrm{d}u.
\end{align}
Next we  analyze the integral over $y$. To this end, we will employ the stationary  phase expansion, Lemma \ref{stationaryphase},  to it.  We first observe that $|u|\ll C/Q \ll Q^{-\epsilon}$. Hence, on writing 
$$(y+u)^{-i\tau}=e^{-i \tau \log y}e^{-i\tau \log(1+u/y)},$$
we note that $e^{-i\tau \log(1+u/y)}$ can be inserted into the weight function $U$, as $$ \frac{\partial ^j}{\partial y^j} e^{-i\tau \log(1+u/y)} \ll Q^{\epsilon j}. $$
Thus the $y$-integral in \eqref{simplified integral } looks like 
\begin{align}
	  I(\tau, u):=\int_{0}^{\infty } U_u(y)	y^{-i\tau} e\left( \pm\frac{2\sqrt{Nmy}}{p^{r-\ell_2} q} \right)   \mathrm{d}y,
\end{align}
where $U_u(y)$ is the new weight function  $$U_u(y):= 2yU(y)W(y+u){(y+u)^{-1/2}} e^{-i\tau \log(1+u/y)}.$$  On taking $+$ sign, and using the change of variable $y \mapsto y^2$, we see that that the stationary point of the phase function is given by $y_0=\frac{ p^{r-\ell_2}q\tau}{2\pi \sqrt{Nm} }$. Thus on applying Lemma \ref{stationaryphase}, we arrive at 
  \begin{align*}
  	  I(\tau, u)=\frac{U_y(y_0)e(\tau\log(e/y_0) /\pi+1/8 )}{\sqrt{\tau/(\pi y_0^2)}}+\mathrm{lower \ order \  terms. }
  \end{align*}
On plugging the above expression in \eqref{simplified integral } and proceeding with the main term, we arrive at 
	\begin{align*}
	\frac{e(1/8)}{2\sqrt{\pi}}&\int_{0}^{\infty }I_u\int_{-\infty}^{\infty }  \gamma_{\pm}(-1/2+i\tau) \frac{y_0U_y(y_0)e(\tau\log(e/y_0) /\pi )}{\sqrt{\tau}}\left(\frac{n_1^2N^\prime Nw}{(qp^{\ell-\ell_1})^3 k}\right)^{-i\tau}   \mathrm{d}\tau \, \mathrm{d}u.
\end{align*}
On using the following  expansion  (due to Stirling formula) (see \cite{RM3})
\begin{align*}
	\gamma_{\pm}(-1/2+i\tau)=e^{3i\tau \log({\tau}/{e\pi})}\Phi_{\pm}(\tau), \ \  \Phi_{\pm}^{(j)}(\tau) \ll 1/\tau^j
\end{align*}
and applying the second derivative bound, Lemma \ref{derivative bound}, on the  $\tau$-integral , we see that it  is bounded by $1$. Hence 
	\begin{align*}
\mathcal{J}\left(n_1^2N^\prime w,  q,  m\right) \ll \int_{0}^{\infty } |I_u| \mathrm{d}u \ll  \frac{\sqrt{M_1}}{p^{r-\ell_2-\ell/2}}\frac{CQ^{\epsilon}}{Q},
\end{align*}
where we used  $$|I_u| \ll \sqrt{M_1}/p^{r-\ell_2-\ell/2},$$  
which follows using \eqref{range of x} and \eqref{g properties}. On analyzing $\mathcal{J}\left(n_1^2 N^\prime w, q^\prime,m^\prime \right)$ in a similar fashion, we get the first part of the lemma. 

\end{proof}
	\section{Final estimates for $\Omega$ and $S_k(N)$}\label{omega zero and nonzero }
In this section, we will estimate $ \Omega$ given in  \eqref{final Omega}. 
We will analyze it in two cases. 
	\subsection{The zero frequency} 
Let $\Omega_0$  denote the contribution of $n_2=0$ to $\Omega$, and let $S_{k,0}(N)$ be its contribution to $S_k(N)$ in \eqref{SN after cauchy}. 
	\begin{lemma}\label{zero frequeny bound}
	 For $\ell^\prime =0$, we have 
		\begin{align*}
			\Omega_0 \ll  \frac{ N_0p^{r+2\ell}R^{\epsilon} C^4M_0^{1/2}k}{ n_1^2p^{2\ell_1}Qq_1^\prime} 
		\end{align*}
		and 
		\begin{align*}
			S_{k,0}(N)\ll R^{\epsilon}N^{1/2}p^{3r/4+3\ell/4}.
		\end{align*}
	\end{lemma}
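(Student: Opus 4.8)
The plan is to insert the bounds of Lemma~\ref{final character bound} and Lemma~\ref{bound for integral} into \eqref{final Omega}, restricted to the frequency $n_2=0$, carry out the remaining sums to obtain the stated bound for $\Omega_0$, and then substitute into \eqref{SN after cauchy}. Throughout we are in the case $\ell^\prime=0$, so $q=q^\prime$, $\ell_2=0$ and $M_0=p^{2r-\ell}R^\epsilon$. By Lemma~\ref{final character bound} the character sum $\mathfrak{C}_0(...)$ vanishes unless $q_2^\prime=q_2^{\prime\prime}$ and $p^{r-\ell+\ell_1}\mid(m-m^\prime)$, and then it is $\ll p^{r+2(\ell-\ell_1)}\,q\,k\sum_{d,d^\prime\mid q,\,(d,d^\prime)\mid(m-m^\prime)/p^{r-\ell+\ell_1}}(d,d^\prime)$, while Lemma~\ref{bound for integral} gives $\mathcal{I}(...)\ll M_1C^2Q^{\epsilon}/(p^{2r-\ell}Q^2)$ uniformly in $m,m^\prime$.

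First I would carry out the outer sums in \eqref{final Omega}. The constraint $q_2^\prime=q_2^{\prime\prime}$ collapses the double sum over $q_2^\prime,q_2^{\prime\prime}\sim C/q_1^\prime$ to a single sum of length $\ll C/q_1^\prime$, and for the sums over $m,m^\prime,d,d^\prime$ the diagonal $m=m^\prime$ (on which $\mathfrak{C}_0(...)\ll p^{r+2(\ell-\ell_1)}C^{2+\epsilon}k$) gives the main term. Combining this with the bound for $\mathcal{I}(...)$ and the trivial estimates $N^\prime\le N_0/n_1^2$, $M_1\le M_0=p^{2r-\ell}R^\epsilon$ and $C\le Q$, a short computation gives
\begin{align*}
	\Omega_0\ \ll\ \frac{N_0\,p^{r+2\ell}\,R^\epsilon\,C^4\,M_0^{1/2}\,k}{n_1^2\,p^{2\ell_1}\,Q\,q_1^\prime}.
\end{align*}
For the off-diagonal range $m\neq m^\prime$ one does not bound the divisor sum in $\mathfrak{C}_0(...)$ crudely by $q^{1+\epsilon}$, but keeps the divisibility $(d,d^\prime)\mid(m-m^\prime)/p^{r-\ell+\ell_1}$ and sums over $m,m^\prime,d,d^\prime$ jointly; since a common divisor $(d,d^\prime)$ close to $q\sim C$ then forces $q\mid(m-m^\prime)/p^{r-\ell+\ell_1}$, which happens rarely, this contribution is kept within the same bound.

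Next I would insert this into \eqref{SN after cauchy}. The inner sum $\Theta$ of \eqref{theta} satisfies $\Theta\ll R^\epsilon$ by Lemma~\ref{ramanubound} (Ramanujan on average), and, since the bound for $\Omega_0$ decays like $n_1^{-2}$ and like $(q_1^\prime)^{-1}$, the sums over $n_1^\prime$ and $q_1^\prime$ contribute only an $R^\epsilon$. Substituting $Q=R^\epsilon\sqrt{N/p^\ell}$, $N_0=N^{1/2}p^{3\ell/2-3\ell_1}kR^\epsilon$ and $M_0=p^{2r-\ell}$, one finds that $S_{k,0}(N)$ is independent of $C$, decreasing in $\ell_1$, and maximized at $M_1=M_0$, so that the extremal configuration is $C=Q$, $\ell_1=0$, $M_1=M_0$, giving $S_{k,0}(N)\ll R^\epsilon N^{3/4}p^{3\ell/4}k^{1/2}$; since $Nk^2\ll R^{3+\epsilon}$ by Lemma~\ref{AF}, this is $\ll R^\epsilon N^{1/2}p^{3r/4+3\ell/4}$.

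The whole argument is routine once Lemmas~\ref{final character bound} and~\ref{bound for integral} are in hand; the only step demanding attention is the off-diagonal range $m\neq m^\prime$ in the zero frequency, where the divisibility $p^{r-\ell+\ell_1}\mid(m-m^\prime)$ has to be played off against the divisor structure (and size) of $\mathfrak{C}_0(...)$ in order to keep $\Omega_0$ within the claimed bound. It is this same interplay, propagated through the optimization, that later pins down the admissible range of the parameter $\ell$ (and hence the choice $\ell=[4r/5]$).
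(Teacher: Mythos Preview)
Your approach is the same as the paper's: plug Lemmas~\ref{final character bound} and~\ref{bound for integral} into \eqref{final Omega}, use the collapse $q_2^\prime=q_2^{\prime\prime}$ forced by the zero frequency, sum over $m,m^\prime,d,d^\prime$, and feed the result into \eqref{SN after cauchy}. Two steps need tightening.

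First, the off-diagonal $m\neq m^\prime$: the paper does not argue qualitatively that ``large $(d,d^\prime)$ forces a rare divisibility''. It computes directly that the off-diagonal contributes a factor $M_1/p^{r-\ell+\ell_1}$ in place of the diagonal's $q$, and then uses the explicit inequality $M_0/p^{r-\ell+\ell_1}=R^\epsilon p^{r-\ell_1}\ll Q$ to absorb it. You should record this inequality; without it the off-diagonal is not obviously dominated.

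Second, and more substantively, Lemma~\ref{ramanubound} does \emph{not} give $\Theta\ll R^\epsilon$ for an individual $n_1$; it only bounds the double sum $\sum_{n_1^2n_2\le x}|A(n_1,n_2)|^2$. So you cannot sum $\Theta^{1/2}/n_1$ termwise. The paper instead applies Cauchy--Schwarz to the $n_1^\prime$-sum (equation~\eqref{theta bound}):
\[
\sum_{n_1^\prime}\frac{(n_1^\prime,k^\prime)^{1/2}}{n_1^{\prime\,3/2}}\,\Theta^{1/2}
\ \le\ \Bigl(\sum_{n_1^\prime}\frac{(n_1^\prime,k^\prime)}{n_1^\prime}\Bigr)^{1/2}
\Bigl(\mathop{\sum\sum}_{n_1^{\prime\,2}n_2\ll N_0}\frac{|A(n_1^\prime,n_2)|^2}{n_1^{\prime\,2}n_2}\Bigr)^{1/2}
\ \ll\ R^\epsilon,
\]
which is the correct use of Lemma~\ref{ramanubound}. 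With these two fixes your argument coincides with the paper's.
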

	\begin{proof}
Let's recall  from \eqref{final Omega} that 
	\begin{align*}
	\Omega_0 \ll \frac{1}{ M_{1}^{1/2}} \sup_{N^\prime \ll N_0/n_1^2}N^{\prime} \mathop{\sum \sum}_{q_2^\prime, q_2^{\prime \prime} \sim C/q_1^\prime} \mathop{\sum \sum}_{m,m' \sim M_{1}} \left|\mathfrak{C}_0(...)\right| \left|\mathcal{I}(...) \right|.
\end{align*}
On plugging the bound for $\mathfrak{C}_0(...)$  from  Lemma \ref{final character bound},  the following bound for $\mathcal{I}(...)$
	\begin{align*}
	\mathcal{I}\left(...\right) \ll \frac{M_1}{p^{2r-2\ell_2-\ell}}\frac{C^2Q^{\epsilon}}{Q^2} \ll \frac{C^2Q^{\epsilon}}{Q^2} 
\end{align*}
from  Lemma \ref{bound for integral}, and using the fact $q_2^{\prime \prime}=q_2^\prime$ in the above expression, we get 
\begin{align*}
		\Omega_0 \ll \frac{p^{r+2\ell}R^{\epsilon} C^2N_0}{ p^{2\ell_1}M_{1}^{1/2}Q^2n_1^2}   \mathop{\sum }_{q_2^\prime  \sim C/q_1^\prime}q k \mathop{\sum \sum}_{\substack{d ,d^{\prime} \vert q }}(d,d^{\prime})  \mathop{\sum \sum}_{\substack{m,m^\prime \sim M_{1} \\ (d,d^{\prime}) \vert \frac{(m-m^{\prime})}{p^{(r-\ell+\ell_1)}} }}1.
\end{align*}
Now  estimating the sum over $m$ and $m^\prime$, we arrive at
\begin{align*}
	\Omega_0 &\ll \frac{ N_0p^{r+2\ell}R^{\epsilon} C^2}{ n_1^2p^{2\ell_1}M_{1}^{1/2}Q^2}   \mathop{\sum }_{q_2^\prime  \sim C/q_1^\prime}q k \mathop{\sum \sum}_{\substack{d ,d^{\prime} \vert q }}\left(M_1(d,d^{\prime})+\frac{M_1^2}{p^{(r-\ell+\ell_1)}}\right) \\
	& \ll \frac{ N_0p^{r+2\ell}R^{\epsilon} C^2M_1^{1/2}}{ n_1^2p^{2\ell_1}Q^2}   \mathop{\sum }_{q_2^\prime  \sim C/q_1^\prime}q k  \left(q+\frac{M_1}{p^{(r-\ell+\ell_1)}}\right) \\
	& \ll \frac{ N_0p^{r+2\ell}R^{\epsilon} C^2M_0^{1/2}}{ n_1^2p^{2\ell_1}Q^2}   \frac{C^2k}{q_1^\prime}  \left(Q+\frac{M_0}{p^{(r-\ell+\ell_1)}}\right)  \\
		& \ll \frac{ N_0p^{r+2\ell}R^{\epsilon} C^2M_0^{1/2}}{ n_1^2p^{2\ell_1}Q^2}  \frac{C^2Qk}{q_1^\prime},
	\end{align*}
as $ M_0/p^{(r-\ell+\ell_1)} = R^\epsilon p^{r-\ell_1}  \ll R^{\epsilon}p^{3r/2-\ell/2}=Q$. Hence we have the first part of the lemma. To prove the second part, we substitute the above expression of $\Omega_0$ in \eqref{SN after cauchy}. Thus we see that  $S_{k,0}(N)$ is dominated by
	\begin{align*}
	 &   \mathop{\sup}_{\substack{C \ll Q \\ M_1 \ll M_0}} 	\frac{p^{\ell_1/2}N^{5/4}}{ Q p^{3\ell/2+r} k^{1/2}C^{2}}   \sum_{\frac{n_1^\prime}{(n_1^\prime ,\, k^\prime)} \ll C }  \Theta^{1/2} \sum_{\frac{n_1^\prime}{(n_1^\prime, \, k^{\prime})}\vert q_1^\prime \vert (n_1^\prime k^\prime)^\infty} \left(\frac{ N_0p^{r+2\ell}R^{\epsilon} C^4M_0^{1/2}k}{ n_1^2p^{2\ell_1}Q q_1^\prime}\right)^{1/2} \\
	   & \ll \mathop{\sup}_{\substack{C \ll Q }}\frac{p^{\ell_1/2}N^{5/4}}{ Q p^{3\ell/2+r} k^{1/2}}   \left(\frac{ N_0p^{r+2\ell} M_0^{1/2}k}{ p^{2\ell_1}Q }\right)^{1/2} \sum_{\frac{n_1^\prime}{(n_1^\prime ,\, k^\prime)} \ll C }  \Theta^{1/2} \sum_{\frac{n_1^\prime}{(n_1^\prime, \, k^{\prime})}\vert q_1^\prime \vert (n_1^\prime k^\prime)^\infty} \frac{1}{n_1\sqrt{q_1^\prime}} \\
	     & \ll  \mathop{\sup}_{\substack{C \ll Q }}\frac{p^{\ell_1/2}N^{5/4}}{ Q p^{3\ell/2+r} k^{1/2}}   \left(\frac{ N_0p^{r+2\ell} M_0^{1/2}k}{ p^{2\ell_1}Q }\right)^{1/2}	\sum_{n_1^\prime\ll Ck^\prime}\frac{(n_1^\prime ,k^\prime)^{1/2}}{n_1^{ \prime 3/2}}\Theta^{1/2},
\end{align*}
as $n_1=n_1^\prime$. Note that 
		\begin{align}\label{theta bound}
			\sum_{n_1^\prime \ll Ck^\prime }\frac{(n_1^\prime ,k^\prime )^{1/2} \Theta^{1/2} }{n_1^{ \prime 3/2}}\ll \left[\sum_{n_1^\prime \ll Ck^\prime}\frac{(n_1^\prime ,k^\prime)}{n_1^\prime}\right]^{1/2}\left[\mathop{\sum \sum}_{n_{1}^{ \prime 2} n_{2} \leq N_0} \frac{\vert A(n_{1}^\prime ,n_{2})\vert ^{2} }{n_1^{\prime 2}n_2}\right]^{1/2}\ll R^{\epsilon}.
		\end{align}
On using this bound, we arrive at
\begin{align*}
S_{k,0}(N) \ll \frac{p^{\ell_1/2}N^{5/4}}{ Q p^{3\ell/2+r} k^{1/2}}   \left(\frac{ N_0p^{r+2\ell} M_0^{1/2}k}{ p^{2\ell_1}Q }\right)^{1/2}\ll R^\epsilon N^{1/2}p^{3r/4+3\ell/4}.
\end{align*}
where we used $M_0=R^{\epsilon}p^{2r-\ell}$, $N_0=R^{\epsilon}\sqrt{N}p^{3\ell/2-3\ell_1}k$, $Q=(N/p^{\ell})^{1/2}$ and $Nk^2 \ll p^{3r}$.
Hence we have the lemma. 
	\end{proof}
	\subsection{The non-zero frequencies}
	Now it remains to   estimate $\Omega$ for non-zero values of $n_2$. Let $\Omega_{\neq 0}$  denote the contribution of $n_2 \neq 0$ to $\Omega$ in  \eqref{final Omega}, and let $S_{k,\neq 0}(N)$ be its contribution to $S_k(N)$ in \eqref{SN after cauchy}.  
	\begin{lemma}\label{omega bound for non-zero n_2}
		For $\ell^\prime =0$, we have 
	$$\Omega_{\neq 0}  \ll \frac{C^5k^2 p^{\ell-5\ell_1/2}p^{4r}}{ n_1 Q q_1^{\prime ^2}},$$
	and 
	$$S_{k,\neq 0}(N) \ll R^{\epsilon}{p^{r-\ell/2}kN^{3/4}}.$$
	\end{lemma}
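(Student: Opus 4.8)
The plan is to feed the bounds of Lemmas \ref{final character bound} and \ref{bound for integral} into the expression \eqref{final Omega} for $\Omega$, restricted to the non-zero frequencies, and then to carry out the remaining sums over $n_2$, $m$, $m^\prime$, $q_2^\prime$ and $q_2^{\prime\prime}$ by elementary divisor and congruence counting. Since $\ell^\prime=0$ we have $\ell_2=0$, $q=q^\prime$ and $\mathcal{Q}=p^{\ell_5}q_1^\prime q_2^\prime q_2^{\prime\prime}k^\prime/n_1^\prime$; for $q_2^\prime,q_2^{\prime\prime}\sim C/q_1^\prime$ this has size $\mathcal{Q}\asymp p^{\ell-\ell_1}kC^2/(q_1^\prime n_1^\prime)$, using $p^{\ell_5}k^\prime=p^{\ell-\ell_1}k$. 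By the second assertion of Lemma \ref{bound for integral} the sum over $n_2$ in \eqref{final Omega} is truncated to $0<|n_2|\ll N_2=R^\epsilon Q\mathcal{Q}/(CN^\prime)$, so that $N^\prime\sum_{0<|n_2|\ll N_2}1\ll R^\epsilon Q\mathcal{Q}/C$ and the free parameter $N^\prime$ drops out of the final count. Inserting $|\mathcal{I}(\dots)|\ll M_1C^2Q^\epsilon/(p^{2r-\ell}Q^2)$ and $|\mathfrak{C}_{\neq 0}(\dots)|\ll \frac{q_1^{\prime 2}k(m,n_1^\prime)}{n_1^\prime}\,p^{r+3(\ell-\ell_1)/2}\,\big(\sum_{d_2}d_2\big)\big(\sum_{d_2^\prime}d_2^\prime\big)$ from the two lemmas, we are reduced to estimating
\[
T:=\sum_{q_2^\prime,q_2^{\prime\prime}\sim C/q_1^\prime}\ \sum_{m,m^\prime\sim M_1}\ \sum_{0<|n_2|\ll N_2}(m,n_1^\prime)\Big(\sum_{d_2\mid(q_2^\prime,\,-n_2m\pm q_2^{\prime\prime}n_1^\prime p^{\ell_6})}d_2\Big)\Big(\sum_{d_2^\prime\mid(q_2^{\prime\prime},\,n_2m^\prime\pm q_2^\prime n_1^\prime p^{\ell_6})}d_2^\prime\Big),
\]
with $\ell_6=2(r-\ell+\ell_1)$.

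First I would open the two divisor sums and interchange the order of summation. For each admissible pair $(d_2,d_2^\prime)$ one has $d_2\mid q_2^\prime$ and $d_2^\prime\mid q_2^{\prime\prime}$; the congruence $d_2^\prime\mid n_2m^\prime\pm q_2^\prime n_1^\prime p^{\ell_6}$ pins $q_2^\prime$ to an arithmetic progression determined by $n_2,m^\prime,d_2^\prime$, the companion congruence $d_2\mid -n_2m\pm q_2^{\prime\prime}n_1^\prime p^{\ell_6}$ pins $q_2^{\prime\prime}$, and the same two congruences then restrict $m$ and $m^\prime$ to residue classes once the remaining variables are fixed. Summing over these progressions and absorbing all the gcd factors $(d_2,m)$, $(d_2^\prime,m^\prime)$, $(d_2,d_2^\prime)$, $(m,n_1^\prime)$, $\dots$ into $R^\epsilon$ (each of bounded average), one obtains $T\ll R^\epsilon(C/q_1^\prime)^2M_1^2N_2$ up to lower-order terms. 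The essential point here is the comparison with the diagonal case $n_2=0$ of Lemma \ref{zero frequeny bound}, where the congruence forced $q_2^{\prime\prime}=q_2^\prime$: now $q_2^\prime$ and $q_2^{\prime\prime}$ vary freely, but only at the price of the single additional short summation over $|n_2|\ll N_2$, so that the off-diagonal genuinely saves a full power of $C$.

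Collecting powers, and using $N^\prime\sum_{n_2}1\ll R^\epsilon Q\mathcal{Q}/C$ with $\mathcal{Q}\asymp p^{\ell-\ell_1}kC^2/(q_1^\prime n_1^\prime)$, together with $M_1\le M_0=R^\epsilon p^{2r-\ell}$, $p^{\ell_5}k^\prime=p^{\ell-\ell_1}k$ and the bound for $T$ above, a direct exponent count in \eqref{final Omega} gives
\[
\Omega_{\neq 0}\ll \frac{C^5k^2\,p^{\ell-5\ell_1/2}\,p^{4r}}{n_1\,Q\,q_1^{\prime 2}},
\]
which is the first assertion (recall $n_1=n_1^\prime$). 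For the second, I would substitute this into \eqref{SN after cauchy} with $\ell^\prime=0$ (so $\ell_2=0$); the inner $q_1^\prime$-sum over $\frac{n_1^\prime}{(n_1^\prime,k^\prime)}\mid q_1^\prime\mid(n_1^\prime k^\prime)^\infty$ contributes $\ll R^\epsilon(n_1^\prime,k^\prime)/n_1^\prime$, and then the sum over $n_1^\prime\ll Ck^\prime$ is controlled by $\sum_{n_1^\prime}(n_1^\prime,k^\prime)^{1/2}\Theta^{1/2}(n_1^\prime)^{-3/2}\ll R^\epsilon$ exactly as in \eqref{theta bound}, using the Ramanujan bound on average (Lemma \ref{ramanubound}). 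Putting in $Q=(N/p^\ell)^{1/2}$, $C\ll Q$ and $Nk^2\ll p^{3r}$ then gives
\[
S_{k,\neq 0}(N)\ll R^\epsilon\,\frac{N^{5/4}k\,p^{r-\ell}}{Q}\ll R^\epsilon\,p^{r-\ell/2}kN^{3/4}.
\]
The hard part will be the counting in the second paragraph: one must keep the interlocking divisor sums, the many gcd factors, and the truncation $N_2$ (which itself depends on $q_2^\prime,q_2^{\prime\prime}$ through $\mathcal{Q}$) under control without any logarithmic loss, and in particular verify that the off-diagonal contribution really does save a full power of $C$ relative to the diagonal bound of Lemma \ref{zero frequeny bound}.
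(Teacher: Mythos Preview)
Your overall strategy is the paper's: feed Lemmas \ref{final character bound} and \ref{bound for integral} into \eqref{final Omega}, use $N^\prime N_2\ll R^\epsilon Q\mathcal{Q}/C$ so that $N^\prime$ drops out, and reduce everything to the divisor--congruence count you call $T$. The substitution into \eqref{SN after cauchy} and the treatment of the $n_1^\prime,q_1^\prime$ sums is also essentially the paper's.

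The gap is in your second paragraph. You write that the congruence $d_2^\prime\mid n_2m^\prime\pm q_2^\prime n_1^\prime p^{\ell_6}$ ``pins $q_2^\prime$'' and its companion ``pins $q_2^{\prime\prime}$'', and that ``the same two congruences then restrict $m$ and $m^\prime$''. You cannot use each congruence twice. If you genuinely fix $m,m^\prime,n_2$ and use the congruences to restrict $q_2^\prime,q_2^{\prime\prime}$, then for each pair $(d_2,d_2^\prime)$ the number of $(q_2^\prime,q_2^{\prime\prime})$ is $\ll\bigl(1+C/(q_1^\prime[d_2,d_2^\prime])\bigr)^2$, and the weight $d_2d_2^\prime$ summed over $d_2,d_2^\prime\ll C/q_1^\prime$ against the ``$+1$'' produces a factor $(C/q_1^\prime)^4$, not $(C/q_1^\prime)^2$; your claimed bound $T\ll R^\epsilon(C/q_1^\prime)^2M_1^2N_2$ does not follow from that route. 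The paper instead writes $q_2^\prime=d_2\tilde q_2^\prime$, $q_2^{\prime\prime}=d_2^\prime\tilde q_2^{\prime\prime}$ (so the divisibility $d_2\mid q_2^\prime$, $d_2^\prime\mid q_2^{\prime\prime}$ is what restricts those variables), and then uses the two congruences to count $m$ and $m^\prime$: for fixed $n_2$ one gets $\ll(d_2,n_2)(n_1^\prime+M_1/d_2)$ choices for $m$ and the analogous bound for $m^\prime$. Summing $d_2d_2^\prime$ against $\frac{C}{q_1^\prime d_2}\cdot\frac{C}{q_1^\prime d_2^\prime}\cdot(1+M_1/d_2)(1+M_1/d_2^\prime)$ then gives $(C/q_1^\prime)^2\bigl(C/q_1^\prime+M_1\bigr)^2$, and since $M_1\le M_0$ dominates $C\ll Q\ll M_0$, one recovers exactly your target $T\ll R^\epsilon(C/q_1^\prime)^2M_1^2N_2$. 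So the bound is right, but the argument you sketched for it is not.

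Two smaller points. First, with this $T$ your exponent count actually yields $\Omega_{\neq 0}\ll C^5k^2p^{\ell-5\ell_1/2+4r}/(n_1^2Qq_1^\prime)$ rather than the stated $1/(n_1q_1^{\prime 2})$; the paper gets the extra $n_1^\prime/q_1^\prime$ by keeping the factor $(m,n_1^\prime)$ and the term $n_1^\prime+M_1/d_2$ through the sums. Both versions lead to the same $S_{k,\neq 0}(N)$ bound, so this is cosmetic. Second, with the paper's $\Omega_{\neq 0}^{1/2}\propto 1/q_1^\prime$ the $q_1^\prime$--sum produces $(n_1^\prime,k^\prime)/n_1^\prime$, not $(n_1^\prime,k^\prime)^{1/2}/n_1^\prime$, so the final $n_1^\prime$--sum is not literally \eqref{theta bound} but its variant with weight $(n_1^\prime,k^\prime)$, which is $\ll\sqrt{k^\prime}R^\epsilon$; this is harmless for the stated bound since $\sqrt{k^\prime}\le k$.
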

	\begin{proof}
Let's recall from \eqref{final Omega} that 
\begin{align*}
	\Omega_{\neq 0}  \ll \frac{ 1 }{ M_{1}^{1/2}} \sup_{N^\prime \ll N_0/n_1^2} N^\prime \mathop{\sum \sum}_{q_2^\prime, q_2^{\prime \prime} \sim C/{q_1^\prime}} \mathop{\sum \sum}_{m,m^\prime \sim M_{1}} \sum_{0<|n_{2}| \ll N_2} \left|\mathfrak{C}_{\neq 0}(...)\right| \left|\mathcal{I}(...) \right|.
\end{align*}

$$ \frac{{q_1^{\prime  2} } k (m,n_1^\prime) }{n_1^\prime}p^{r+3(\ell-\ell_1)/2}$$
On plugging bounds for $\mathfrak{C}_{\neq 0}(...)$ and $\mathcal{I}(...)$ from  Lemma \ref{final character bound} and Lemma \ref{bound for integral} respectively  in the above expression, we arrive at
\begin{align*}
\frac{C^2q_1^{\prime 2}k p^{3(\ell-\ell_1)/2}M_1^{1/2}}{ p^{r-\ell} n_1 Q^2} \sup_{N^\prime \ll \frac{N_0}{n_1^2}} N^{\prime }   \mathop{\sum \sum}_{q_2^\prime, q_2^{\prime \prime} \sim C/{q_1^\prime}} \mathop{\sum \sum}_{\substack{d_2  \vert q_2^\prime \\ d_2^\prime \vert q_2^{\prime \prime } }} d_2d_2^\prime    \mathop{ \mathop{\sum \ \sum \ \  \ \sum}_{m, \, m^{\prime} \sim M_{1} \ \ 0< |n_2| \ll N_2 }}_{\substack{ -n_2m \pm q_2^{\prime \prime} n_1^\prime p^{\ell_6}  \, \equiv \,  0 \, {\rm mod} \, d_{2} \\   n_2m^\prime  \pm q_2^{ \prime} n_1^\prime p^{\ell_6} \, \equiv \,  0 \, {\rm mod} \, d_{2}^{\prime}}}  (m,n_1^\prime).
\end{align*}
By the change of variable $q_2^\prime \mapsto q_2^\prime d_2 $ and $q_2^{\prime \prime } \mapsto q_2^{\prime \prime }d_2^\prime $, we arrive at 
\begin{align*}
\frac{C^2q_1^{\prime 2}k p^{3(\ell-\ell_1)/2}M_1^{1/2}}{ p^{r-\ell} n_1 Q^2}  \sup_{N^\prime \ll \frac{N_0}{n_1^2}} N^{\prime 2/3}   \mathop{\sum \sum}_{d_{2},  d_{2}^{\prime} \ll \frac{C}{q_1^\prime} }d_2d_2^\prime  \mathop{\sum \sum }_{\substack{q_{2}^\prime \sim \frac{C}{d_{2}q_{1}^\prime} \\ q_{2}^{\prime \prime } \sim \frac{C}{d_{2}^{\prime} q_{1}^\prime }}}    \mathop{ \mathop{\sum \ \sum \ \  \ \sum}_{m, \, m^{\prime} \sim M_{1} \ \ 0< |n_2| \ll N_2 }}_{\substack{ -n_2m \pm q_2^{\prime \prime}d_2^\prime n_1^\prime p^{\ell_6}  \, \equiv \,  0 \, {\rm mod} \, d_{2} \\   n_2m^\prime  \pm q_2^{ \prime}d_2 n_1^\prime p^{\ell_6} \, \equiv \,  0 \, {\rm mod} \, d_{2}^{\prime}}}  (m,n_1^\prime).
\end{align*}
Next, we  count the number of $m$ in the above expression as follows:
	\begin{align*}
		\sum_{\substack{m \sim M_{1} \\-n_2m \pm q_2^{\prime \prime}d_2^\prime n_1^\prime p^{\ell_6}   \equiv   0 \, {\rm mod} \, d_{2} }}(n_1^\prime,m) =\sum_{\delta|n_1^\prime}\delta \sum_{\substack{m \sim M_{1}/\delta \\ -n_2m \pm \bar{\delta}q_2^{\prime \prime}d_2^\prime n_1^\prime p^{\ell_6}  \, \equiv \,  0 \, {\rm mod} \, d_{2} }}1 \ll  (d_{2},n_{2})  \left(n_1^\prime+\frac{M_{1}}{d_{2}}\right) \notag,
	\end{align*} 
Recall that $(n_1^\prime, d_2)=1$. Counting the number of $m^\prime$ in a similar fashion we get that the number of  $(m,m^\prime)$ pairs   is dominated by 
	$$ O((d_{2}^{\prime},  q_{2}^\prime d_{2}n_1^\prime) \, (d_{2}, n_{2}) (n_1^\prime+{M_{1}}/{d_{2}}) (1+{M_{1}}/{d_{2}^{\prime}})).$$ 
It follows that the contribution of this to $\Omega_{\neq 0}$ is dominated by
	\begin{align*}
	\frac{C^2q_1^{\prime 2}k p^{3(\ell-\ell_1)/2}M_1^{1/2}}{ p^{r-\ell} n_1 Q^2} \sup_{N^\prime \ll \frac{N_0}{n_1^2}} N^{\prime }   \mathop{\sum \sum}_{d_{2},  d_{2}^{\prime} \ll \frac{C}{q_1^\prime} }d_2d_2^\prime  
		 \mathop{\sum \sum }_{\substack{q_{2}^\prime \sim {C}/{d_{2}q_{1}^\prime} \\ q_{2}^{\prime \prime } \sim {C}/{d_{2}^{\prime} q_{1}^\prime }}} \\
		 \times  \sum_{ 1\leq n_2 \ll N_2}(d_{2}^{\prime},  q_{2}^\prime d_{2}n_1^\prime) \, (d_{2}, n_{2}) \left(n_1^\prime+\frac{M_{1}}{d_{2}}\right) \left(1+\frac{M_{1}}{d_{2}^{\prime}}\right).
	\end{align*}
Summing over $n_2$ and $q_2^{\prime \prime}$ we arrive at 
	\begin{align*}
\frac{C^3q_1^{\prime }k p^{3(\ell-\ell_1)/2}M_1^{1/2}}{ p^{r-\ell} n_1 Q^2} \frac{kCQp^{\ell-\ell_1}}{q_1^\prime n_1}   \mathop{\sum \sum}_{d_{2},  d_{2}^{\prime} \ll \frac{C}{q_1^\prime} }d_2
	\mathop{\sum }_{\substack{q_{2}^\prime \sim \frac{C}{d_{2}q_{1}^\prime}}} 
	(d_{2}^{\prime},  q_{2}^\prime d_{2}n_1^\prime) \left(n_1^\prime+\frac{M_{1}}{d_{2}}\right) \left(1+\frac{M_{1}}{d_{2}^{\prime}}\right),
\end{align*}
where we used $$\sup_{N^\prime \ll \frac{N_0}{n_1^2}} N^{\prime }N_2 \ll N^\prime \frac{Q}{C} \frac{\mathcal{Q}}{N^\prime} \ll \frac{kCQp^{\ell-\ell_1}}{q_1^\prime n_1}.$$
Next summing over $d_2^\prime$ we get 
	\begin{align*}
\frac{C^3q_1^{\prime }k p^{3(\ell-\ell_1)/2}M_1^{1/2}}{ p^{r-\ell} n_1 Q^2} \frac{kCQp^{\ell-\ell_1}}{q_1^\prime n_1}   \mathop{\sum }_{d_{2} \ll \frac{C}{q_1^\prime} }d_2
	\mathop{\sum }_{\substack{q_{2}^\prime \sim \frac{C}{d_{2}q_{1}^\prime}}} 
	 \left(n_1^\prime+\frac{M_{1}}{d_{2}}\right) \left(\frac{C}{q_1^\prime}+{M_{1}}\right).
\end{align*}
Executing the remaining sums we get 
	\begin{align*}
\frac{C^3q_1^{\prime }k p^{3(\ell-\ell_1)/2}M_1^{1/2}}{ p^{r-\ell} n_1 Q^2} \frac{kCQp^{\ell-\ell_1}}{q_1^\prime n_1}   \frac{C}{q_1^\prime}
	\left(\frac{Cn_1^\prime}{q_1^\prime}+{M_{1}}\right) \left(\frac{C}{q_1^\prime}+{M_{1}}\right). 
\end{align*}
Now bounding $M_1$ by $M_0$  and using $C \ll Q \ll M_0$, we get 
	\begin{align*}
	\Omega_{\neq 0} &\ll \frac{C^3q_1^{\prime }k p^{3(\ell-\ell_1)/2}M_1^{1/2}}{ p^{r-\ell} n_1 Q^2} \frac{kCQp^{\ell-\ell_1}}{q_1^\prime n_1}  \frac{C}{q_1^\prime}
	\frac{M_0^2n_1^\prime}{q_1^\prime} \\
	&\ll \frac{C^5k^2 p^{5(\ell-\ell_1)/2}M_0^{5/2}}{ p^{r-\ell} n_1 Q q_1^{\prime ^2}}  \ll \frac{C^5k^2 p^{\ell-5\ell_1/2}p^{4r}}{ n_1 Q q_1^{\prime ^2}},
\end{align*}
where we used $M_0 \ll R^{\epsilon}p^{2r-\ell}$. On using the above bound in \eqref{SN after cauchy}, we arrive at
	\begin{align} \label{SN after non zero}
	S_k(N) \ll  &\mathop{\sup}_{\substack{C \ll Q }} 	\frac{p^{\ell_1/2}N^{5/4}}{ Q p^{3\ell/2+r} k^{1/2}C^{2}} \sum_{\frac{n_1^\prime}{(n_1^\prime ,\, k^\prime)} \ll C }  \Theta^{1/2} \sum_{\frac{n_1^\prime}{(n_1^\prime, \, k^{\prime})}\vert q_1^\prime \vert (n_1^\prime k^\prime)^\infty}\frac{C^{5/2}kp^{\ell/2-5\ell_1/4}p^{2r}}{\sqrt{n_1Q}q_1^\prime} \notag \\ 
	&\ll \frac{p^{r-\ell-3\ell_1/4}N^{5/4}k^{1/2}}{ {Q} }   \sum_{\frac{n_1^\prime}{(n_1^\prime ,\, k^\prime)} \ll C}  \Theta^{1/2} \sum_{\frac{n_1^\prime}{(n_1^\prime, \, k^{\prime})}\vert q_1^\prime \vert (n_1^\prime k^\prime)^\infty}\frac{1}{\sqrt{n_1}q_1^\prime} \notag \\
	&\ll  \frac{p^{r-\ell-3\ell_1/4}N^{5/4}k^{1/2}}{ {Q} }   \sum_{\frac{n_1^\prime}{(n_1^\prime ,\, k^\prime)} \ll C}  \frac{(n_1^\prime,k^\prime)}{n_1^{\prime 3/2}}\Theta^{1/2}.
	\end{align}
Note that 
	\begin{align}
	\sum_{\frac{n_1^\prime}{(n_1^\prime ,\, k^\prime)} \ll C} \frac{(n_1^\prime ,k^\prime ) \Theta^{1/2} }{n_1^{ \prime 3/2}} \ll \left[\sum_{\frac{n_1^\prime}{(n_1^\prime ,\, k^\prime)} \ll C} \frac{(n_1^\prime ,k^\prime)^2}{n_1^\prime}\right]^{1/2}\left[\mathop{\sum \sum}_{n_{1}^{ \prime 2} n_{2} \leq N_0} \frac{\vert A(n_{1}^\prime ,n_{2})\vert ^{2} }{(n_1^{\prime 2}n_2)}\right]^{1/2}\ll \sqrt{k^\prime}R^{\epsilon}.
\end{align}
On plugging the above bound in  \eqref{SN after non zero}, we get 
\begin{align}
	S_{k,\neq 0}(N) \ll  \frac{p^{r-\ell-3\ell_1/4}N^{5/4}}{ {Q} }   \ll  {p^{r-\ell/2}kN^{3/4}},  
\end{align}
where we used $Q=\sqrt{N/p^\ell}$. Hence the lemma follows. 
	\end{proof}
	\section{Conclusion: proof of Theorem \ref{r>2 thm}} In this section, we will conclude the proof of Theorem \ref{r>2 thm}.  
	Using  bounds from Lemma \ref{zero frequeny bound} and Lemma \ref{omega bound for non-zero n_2},  we see that 
	\begin{align*}
		S_k(N) \ll |S_{k,0}(N)|+|S_{k,\neq 0}(N)| \ll  R^\epsilon N^{1/2}p^{3r/4+3\ell/4}+R^\epsilon{k}N^{3/4}p^{r-\ell/2}.
	\end{align*}
	On dividing the above equation by $k\sqrt{N}$, we get
	\begin{align*}
		S_k(N)/(k\sqrt{N}) \ll R^\epsilon p^{3r/4+3\ell/4}+R^\epsilon N^{1/4}p^{r-\ell/2} \ll  R^\epsilon p^{3r/4+3\ell/4}+R^\epsilon p^{7r/4-\ell/2}.
	\end{align*}
	Optimizing the above bound by equating  the terms on the right side, we get
	$$p^{3r/4+3\ell/4}=p^{7r/4-\ell/2} \iff p^{5\ell/4}=p^{r} \iff \ell =[4r/5].$$
On	plugging this in \eqref{AFE}, we get
	\begin{align} 
		L \left( \frac{1}{2}, \pi \times f \times \chi \right) \ll R^{\epsilon}p^{3r/4+3r/5} \ll R^{\epsilon}R^{3/2-3/20}.
	\end{align} 
Hence  Theorem \ref{r>2 thm} follows.
\section{Appendix} \label{mass case}
In this section, we will give a rough sketch of the proof of sub-convexity of 
$$L(1/2, E_{\min} \times f\times \chi).$$
Following Lemma \ref{AF}, the  problem boils down to getting cancellations in the following sum:
	\begin{align}\label{d_3)}
S(N) = \mathop{\sum}_{n =1}^{\infty} \, d_3(n) \, \lambda_{f}(n)\chi(n) \, W\left(\frac{n}{N}\right). 
\end{align}
After applying DFI and congruence equation  trick (see Subsection \ref{dfi and congruence equation}), we arrive at (upto negligible error terms)
	\begin{align} \label{S(N) for d3}
	S_{}(N)=& \frac{1}{Q p^\ell} \int_{\mathbb{R}}W_1(x) \sum_{1\leq q \leq Q} \frac{g(q,x)}{q} \sideset{}{^\star} \sum_{a \, {\mathrm{mod}}\, q} \sum_{b \, {\mathrm{mod}}\, p^{\ell}} \notag \\
	& \times \sum_{n=1}^{\infty} d_3(n) e\left(\frac{(a+bq)n}{p^\ell q}\right) e\left(\frac{nx}{p^\ell q Q}\right) W\left(\frac{n}{N}\right) \notag \\
	& \times \sum_{m=1}^{\infty} \lambda_f(m) \chi(m) e \left(\frac{-(a+bq)m}{p^\ell q}\right) e\left(\frac{-mx}{p^\ell qQ} \right)U\left(\frac{m}{N}\right). 
\end{align}

Next, we apply summation formulae to the  above  $n$-sum and $m$-sum. We first recall Voronoi formula for $d_3(n)$.  Set 
$$\sigma_{0,0}(k_1,k_2)=\sum_{d_1|k_2}\mathop{\sum}_{\substack{{d_2d_1 | k_2} \\ (d_2,k_1)=1}}1.$$
	Let $g$ be a compactly supported smooth function on  $ (0, \infty )$ and $\tilde{g}(s) = \int_{0}^{\infty} g(x) x^{s-1} \mathrm{d}x$ be its Mellin transform. For $\ell= 0$ and $1$, we define
\begin{equation*}
	\gamma_{\ell}(s) :=  \frac{\pi^{-3s-\frac{3}{2}}}{2} \, \left( \frac{\Gamma\left(\frac{1+s+ \ell}{2}\right)}{\Gamma\left(\frac{-s+ \ell}{2}\right)}\right)^3.
\end{equation*}
Set $\gamma_{\pm}(s) = \gamma_{0}(s) \mp \gamma_{1}(s)$ and let 
\begin{align}\label{gl3 integral transform for d_3}
	G_{\pm}(y) = \frac{1}{2 \pi i} \int_{(\sigma)} y^{-s} \, \gamma_{\pm}(s) \, \tilde{g}(-s) \, \mathrm{d}s,
\end{align}
where $\sigma > -1$. With the aid of the above terminology, we now state the $GL(3)$ Voronoi summation formula in the following lemma:
\begin{lemma} \label{gl3voronoi for d_3}
	Let $g(x)$ and  $d_3(n)$ be as above. Let $a, \bar{a}, q \in \mathbb{Z}$ with $c \neq 0, (a,c)=1,$ and  $a\bar{a} \equiv 1(\mathrm{mod} \ q)$. Then we have
	\begin{align} \label{GL3-Voro for d-3}
		&\sum_{n=1}^{\infty} d_3(n) e\left(\frac{an}{c}\right) g(n) \notag  \\
		=&\frac{1}{2c^2}\widetilde{g}(1) \sum_{n_{1}|c} n_1\tau(n_1)P_2(n_1,c) S\left( \bar{a}, 0; \frac{c}{n_1}\right) \notag \\
			&+\frac{1}{2c^2}\widetilde{g}^\prime(1) \sum_{n_{1}|c} n_1\tau(n_1)P_1(n_1,c) S\left( \bar{a}, 0; \frac{c}{n_1}\right) \notag \\
				&+\frac{1}{4c^2}\widetilde{g}^{\prime \prime}(1) \sum_{n_{1}|c} n_1\tau(n_1) S\left( \bar{a}, 0; \frac{c}{n_1}\right)  \notag \\
					&+{c} \sum_{\pm} \sum_{n_{1}|c} \sum_{n_{2}=1}^{\infty}  \frac{1}{n_{1} n_{2}} \sum_{n_3|n_1}\sum_{n_4|\frac{n_1}{n_3}}\sigma_{0,0}\left(\frac{n_1}{n_3n_4}, n_2\right) 
				S\left( \bar{a}, \pm n_{2}; \frac{c}{n_1}\right) G_{\pm} \left(\frac{n_{1}^2 n_{2}}{c^3 }\right)
	\end{align} 
	where 
	$P_1(n_1,c)=\frac{5}{3}\log n_1 -3\log c+3 \gamma-\frac{1}{3\tau(n)}\sum_{d|n} \log d,$ where $\gamma$ is the Euler constant and $P_2(n_1,c)$ is also some polynomial similar to $P_1(n_1,c)$ in $\log n_1$ and $\log c$. 
\end{lemma}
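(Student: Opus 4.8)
The plan is to derive \eqref{GL3-Voro for d-3} from the analytic properties of the additively twisted Dirichlet series
$$E_3(s;a/c):=\sum_{n=1}^{\infty}\frac{d_3(n)\,e(an/c)}{n^s},\qquad \Re s>1,$$
along the same lines as the classical proof of the Voronoi formula for $d(n)$, but carrying one extra degree. Since $g$ is smooth and compactly supported, Mellin inversion gives, for any fixed $\sigma>1$,
$$\sum_{n=1}^{\infty}d_3(n)\,e(an/c)\,g(n)=\frac{1}{2\pi i}\int_{(\sigma)}\widetilde g(s)\,E_3(s;a/c)\,\mathrm{d}s,$$
so everything reduces to the meromorphic continuation and functional equation of $E_3(s;a/c)$.

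To obtain these I would use $d_3=1*1*1$: writing $\sum_{d_1d_2d_3=n}$, opening the additive character, and performing the innermost sum over $d_3$, one sees that the $d_3$-sum has a pole at $s=1$ precisely when $c\mid d_1d_2$ (using $(a,c)=1$); summing the remaining $(d_1d_2)^{-s}$ over $d_1d_2$ with $c\mid d_1d_2$ then contributes a further double pole there. Hence $E_3(s;a/c)$ continues to $\mathbb{C}$ with a unique singularity at $s=1$, a pole of order exactly three, and the constraint $c\mid d_1d_2$ naturally produces the sum $\sum_{n_1\mid c}$ together with the divisor weight $\tau(n_1)$, the Ramanujan sums $S(\bar a,0;c/n_1)$, and the logarithmic polynomials $P_1(n_1,c),P_2(n_1,c)$ of the statement. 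Alternatively one may simply invoke the general $GL(3)$ Voronoi formula (Lemma \ref{gl3voronoi}) applied to the minimal Eisenstein series $E_{\min}$, the extra main terms then being nothing but the contribution of the triple pole of the completed $\zeta(s)^3$.

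Next I would move the contour in the displayed integral to the line $\Re s=-\epsilon$. Crossing $s=1$ picks up $\mathrm{Res}_{s=1}\big[\widetilde g(s)\,E_3(s;a/c)\big]$, and since the pole has order three and $\widetilde g(s)=\widetilde g(1)+\widetilde g'(1)(s-1)+\tfrac12\widetilde g''(1)(s-1)^2+\cdots$, this residue is exactly the first three lines of \eqref{GL3-Voro for d-3}, the constants $\tfrac12,\tfrac14$ arising from the Taylor coefficients of $\widetilde g$. On $\Re s=-\epsilon$ I would then apply the functional equation of $E_3(s;a/c)$ -- equivalently the $GL(3)$ functional equation for $E_{\min}$, with the triple gamma factor $\gamma_{\pm}$ of \eqref{gl3 integral transform for d_3} -- which, after $s\mapsto 1-s$ and $a\mapsto\bar a$, rewrites it as an absolutely convergent Dirichlet series in a dual variable whose $n_2$-th coefficient is $\sum_{n_3\mid n_1}\sum_{n_4\mid n_1/n_3}\sigma_{0,0}(n_1/(n_3n_4),n_2)$ weighted by the Kloosterman sum $S(\bar a,\pm n_2;c/n_1)$; evaluating the residual gamma integral as $G_{\pm}(n_1^2n_2/c^3)$ then yields the last line.

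I expect the main obstacle to be bookkeeping rather than anything analytic: correctly tracking the arithmetic factors through the residue computation and the functional equation -- the divisor functions $\tau(n_1)$ and $\sigma_{0,0}$, the nested sums over $n_3,n_4$, the sums $S(\bar a,\pm n_2;c/n_1)$ and $S(\bar a,0;c/n_1)$, and above all the explicit shapes of $P_1(n_1,c),P_2(n_1,c)$ and the rational constants. This part is purely computational but genuinely delicate; the analytic ingredients (continuation, the order of the pole, and the $GL(3)$ functional equation for $E_{\min}$) are standard.
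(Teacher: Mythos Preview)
The paper does not actually prove this lemma: its entire proof reads ``See \cite{Li*} for the proof.'' Your sketch---Mellin inversion, contour shift across the triple pole of the additively twisted series $E_3(s;a/c)$, residue giving the three main terms, and the $GL(3)$ functional equation for $E_{\min}$ producing the dual sum with $G_{\pm}$---is precisely the standard argument carried out in that reference, so your approach is correct and aligned with what the paper invokes.
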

\begin{proof}
	See \cite{Li*} for the proof. 
\end{proof}
On applying the above lemma to the $n$-sum and  the $GL(2)$ Voronoi formula to the $m$-sum in \eqref{S(N) for d3}, we arrive at
$$S(N)=S_{\mathrm{error}}(N)+S_{\mathrm{main,\,  1}}(N)+S_{\mathrm{main,\,  2}}(N)+S_{\mathrm{main,\,  3}}(N),$$
where $S_{\mathrm{error}}(N)$ is the expression of $S(N)$ (after the Voronoi formulae) corresponding to last line of  \eqref{GL3-Voro for d-3},  $S_{\mathrm{main}, \, j}(N)$ is the expression of $S(N)$ corresponding to $(j+1)$-th line of  \eqref{GL3-Voro for d-3} for $j=1,\, 2$ and $3$.  We observe that the analysis of $S_{\mathrm{error}}(N)$  is exactly similar to that of $S(N)$ in \eqref{SN before cauchy}. Thus,  we will analyze $S_{\mathrm{main,\,  j}}(N)$ only. Let's consider 
$S_{\mathrm{main,\,  1}}(N)$.  Let's assume $q \sim Q$ for simplicity (generic case). 
 After Voronoi formula, $n$-sum has transfered to 
 $$\frac{1}{2(p^{\ell} q)^2}\widetilde{g}(1) \sum_{n_{1}|p^{\ell }q} n_1\tau(n_1)P_2(n_1,p^{\ell} q) S\left( \overline{(a+bq)}, 0; \frac{p^\ell q}{n_1}\right). 
  $$
 Assumming square root cancellations (which we will get on average over $a$) in the Kloosterman sum, we see the the above sum is bounded by $N/(p^{\ell} q)$, as $\widetilde{g}(1) \ll N$.  Thus we save $p^{\ell}q$ over the trivial bound which is  $N$. Analysis of $GL(2)$ Voronoi formula will give us a saving of size $N/{(p^rq)}$ over the trivial bound $N$.  Moreover, on analysing the sum over $a$ and $b$ like before, we  save $\sqrt{q}\sqrt{p^\ell}.$ Thus, in total,  we have saved 
 $$p^\ell q \times \frac{N}{p^rq}\times \sqrt{qp^\ell}=\frac{N\sqrt{q}p^{3\ell/2}}{p^{r}}$$
 over the trivial bound $N^2$.  This is sufficient as long as 
 $$\frac{N\sqrt{q}p^{3\ell/2}}{p^{r}} >N \iff \frac{\sqrt{N}p^{3\ell}}{p^{\ell/2}} >p^{2r} \iff p^{5\ell/2}>p^{r/2}.$$
  By our choice of $\ell$ which is $\ell=4r/5$, we get the subconvexity. 
%
%
	{}
\end{document}